\newtheorem{thm}{Theorem}\numberwithin{thm}{section}
\newtheorem{lem}[thm]{Lemma}
\newtheorem{prop}[thm]{Proposition}
\newtheorem{cor}[thm]{Corollary}
\newtheorem{exam}[thm]{Example}
\newtheorem{rema}[thm]{Remark}
\newtheorem{con}[thm]{Conjecture}
\newtheorem{defi}[thm]{Definition}
\newtheorem*{thm2}{Theorem}
\newtheorem*{cor2}{Corollary}
\begin{document}
\begin{center}
\huge{Tilting objects on twisted forms of some relative flag varieties}\\[1,5cm]
\end{center}
\begin{center}

\large{Sa$\mathrm{\check{s}}$a Novakovi$\mathrm{\acute{c}}$}\\[0,5cm]
{\small March 18, 2015}\\[0,5cm]
\end{center}
{\small \textbf{Abstract}. 
We prove the existence of tilting objects on generalized Brauer--Severi varieties, some relative flags and some twisted forms of relative flags. As an application we obtain tilting objects on certain homogeneous varieties of classical type and on certain twisted forms of homogeneous varieties of type $A_n$ and $C_n$. 
\begin{center}
\tableofcontents
\end{center}
\section{Introduction}
The study of derived categories of coherent sheaves on schemes dates back to the 70's where Beilinson \cite{BE} described the bounded derived category of coherent sheaves on $\mathbb{P}^n$ and constructed tilting bundles. Later Kapranov \cite{KAP}, \cite{KAP1}, \cite{KAP2} constructed tilting bundles on certain homogeneous varieties and it was discovered that derived categories of coherent sheaves appeared in many areas of mathematics. Moreover, further examples of smooth projective schemes admitting a tilting bundle can be obtained from certain blow ups and taking projective bundles \cite{CMR1}, \cite{CRMR}, \cite{DO}. Note that a smooth projective scheme admitting a tilting bundle satisfies very strict conditions, namely its Grothendieck group is a free abelian group of finite rank and the Hodge diamond is concentrated on the diagonal (in characteristic zero) \cite{BH}. However, it is still an open problem to give a complete classification of smooth projective $k$-schemes admitting a tilting bundle. In the case of curves one can prove that a smooth projective algebraic curve has a tilting bundle if and only if the curve is a one-dimensional Brauer--Severi variety. But already for smooth projective algebraic surfaces there is currently no classification of surfaces admitting such a tilting object. It is conjectured that smooth projective algebraic surfaces have a tilting bundle if and only if they are rational (see \cite{BS}, \cite{HIP}, \cite{HIP2}, \cite{HIP3}, \cite{KI} and \cite{PE} for results in this direction). The results of Kapranov naturally led to the conjecture that homogeneous varieties should admit a full (strongly) exceptional collection. Up to now only partial results in favor of this conjecture are known (see \cite{SAM}, \cite{KUZ2}). In the present work we focus on relative versions of the classical results of Kapranov and on the problem of constructing tilting objects on certain twisted forms of homogeneous varieties. We start with a generalization of a result of Blunk \cite{BLU}. In loc.cit. the author proved the existence of tilting bundles on generalized Brauer--Severi varieties over fields of characteristic zero. We prove that tilting bundles exist on generalized Brauer--Severi varieties over arbitrary fields.
\begin{thm2}(Theorem 3.3)
Let $X=\mathrm{BS}(d,A)$ be a generalized Brauer--Severi variety over a field $k$ associated to a degree $n$ central simple $k$-algebra $A$. Then $X$ admits a tilting bundle.
\end{thm2} 
This theorem is proved by exploiting a result of Buchweitz, Leuschke and Van den Bergh \cite{BLB} stating a characteristic-free tilting bundle on the Grassmannian and by applying techniques from descent theory. The tilting bundle obtained by Blunk \cite{BLU} is constructed in this way that after base change the direct summands of that bundle become copies of the direct summands of Kapranov's tilting bundle. So Blunk's argument only works in characteristic zero because Kapranov's tilting bundle fails to be tilting in small characteristic (see \cite{BLB}). So for the proof of Theorem 3.3 one really needs a characteristic-free tilting bundle to apply descent theory. To prove some relative version of the above theorem, we first have to show that Grassmannian bundles admit tilting objects, generalizing in this way a result of Costa and Mir\'o-Roig \cite{CMR1} and Kapranov \cite{KAP}. For this, let $k$ be an algebraically closed field of characteristic zero, $X$ a smooth projective and integral $k$-scheme and $\mathcal{E}$ a locally free sheaf of finite rank on $X$. We then show the following: 
\begin{thm2}(Theorem 4.6)
Let $k$, $X$ and $\mathcal{E}$ be as above and suppose $X$ has a tilting bundle. Then the Grassmannian bundle $\mathrm{Grass}_X(l,\mathcal{E})$ admits a tilting bundle too. 
\end{thm2}
As the partial relative flag variety $\mathrm{Flag}_X(l_1,...,l_m,\mathcal{E})$ is obtained as the successive iteration of Grassmannian bundles, Theorem 4.6 provides us with a tilting object on $\mathrm{Flag}_X(l_1,...,l_m,\mathcal{E})$ (see Corollary 4.7). Now in order to get some relative version of Kapranov's result on smooth quadrics, we first prove a generalization of Theorem 4.6. It concerns smooth fibrations $\pi :X\rightarrow Z$ and combines ideas occurring in the work of Costa, Di Rocco and Mir\'o-Roig \cite{CRMR} and Samokhin \cite{SAM}. We fix the notation: Let $\pi:X\rightarrow Z$ be a flat proper morphism between two smooth projective schemes over an algebraically closed field of characteristic zero. Let $\mathcal{E}_1,...,\mathcal{E}_n$ be a set of locally free sheaves in $D^b(X)$ and suppose that for any point $z\in Z$ the restriction $\mathcal{E}^z_i=\mathcal{E}_i\otimes \mathcal{O}_{X_z}$ to the fiber $X_z$ is a full strongly exceptional collection for $D^b(X_z)$. Then one has:
\begin{thm2}(Theorem 4.15)
Let $\pi:X\rightarrow Z$ and $\mathcal{E}_i$ be as above and suppose that $D^b(Z)$ admits a tilting bundle $\mathcal{T}$. Then there exists an invertible sheaf $\mathcal{M}$ on $Z$ such that $\mathcal{R}=\bigoplus^n_{i=1}\pi^*(\mathcal{T}\otimes \mathcal{M}^{\otimes i})\otimes \mathcal{E}_i$ is a tilting bundle for $D^b(X)$.
\end{thm2}
Note that the more general approach of Theorem 4.15 also gives an alternative proof of Theorem 4.6. Now let $X$ be smooth projective and integral scheme over $\mathbb{C}$, $\mathcal{E}$ a locally free sheaf of finite rank and $q$ a symmetric quadratic form $q\in \Gamma(X,\mathrm{Sym}^2(\mathcal{E}^{\vee}))$ which is non-degenerate on each fiber. We denote by $\mathcal{Q}=\{q=0\}\subset \mathbb{P}(\mathcal{E})$ the quadric bundle and by $\pi$ the projection $\pi:\mathcal{Q}\rightarrow X$. Theorem 4.15 has the following consequence:
\begin{thm2}(Theorem 4.16)
Let $X$, $\mathcal{E}$ and $\mathcal{Q}$ be as above. Suppose $H^1(X,\mathbb{Z}/2\mathbb{Z})=0$ and that $\mathcal{E}$ is orthogonal and carries a spin structure. Suppose furthermore that $X$ admits a tilting bundle. Then $D^b(\mathcal{Q})$ admits a tilting bundle too.
\end{thm2}
As a consequence of Theorem 4.6 we can prove a generalization of Theorem 3.3 from above and a generalization of a result due to Yan \cite{YY}. Let $X$ be a smooth projective and integral $k$-scheme becoming rational after a separable field extension $k\subset L$. Consider a sheaf of Azumaya algebras $\mathcal{A}$ on $X$ and denote by $p:\mathrm{BS}(l_1,...,l_m,\mathcal{A})\rightarrow X$ a twisted form of a partial relative flag. 
\begin{thm2}(Corollary 5.5)
Let $X$ be as above, $k$ a field of characteristic zero and $p:\mathrm{BS}(l_1,...,l_m,\mathcal{A})\rightarrow X$ a twisted form of a partial relative flag. Suppose that $X$ becomes rational after a separable field extension $k\subset L$. If $X$ admits a tilting bundle, then $\mathrm{BS}(l_1,...,l_m,\mathcal{A})$ admits a tilting bundle too.
\end{thm2}
We believe that adapting the approach developed by Buchweitz, Leuschke and Van den Bergh \cite{BLB} in the relative setting would lead to a construction of a tilting bundle on $\mathrm{Grass}(l,\mathcal{E})$ over arbitrary fields $k$ and that therefore Corollary 5.5 holds without the assumption on $k$ being of characteristic zero. Samokhin \cite{SAM} proved that for $G$ a semisimple algebraic group of classical type and $B$ a Borel subgroup the flag variety $G/B$ admits a full exceptional collection. Collecting the above theorems, we observe that in this situation $G/B$ admits also a tilting object (see Theorem 5.1). The work of Panin \cite{PA} and Blunk \cite{BLU} provide some evidence to presume that twisted forms of homogeneous varieties should also posses a tilting object (see Conjecture 3.5 below). As an application of Corollary 5.5 we get that certain twisted forms of homogeneous varieties indeed admit tilting objects and provide further evidence for Conjecture 3.5. 
\begin{cor2}(Corollary 5.6, 5.7)
For a field $k$ of characteristic zero, let $X$ be a twisted form of the homogeneous variety $\mathrm{Sp}_{\bar{k}}(2n)/B$ specified in Section 5, or a twisted form of a partial flag $\mathrm{Flag}_{\bar{k}}(l_1,...,l_m,V)$, given as $\mathrm{BS}(l_1,...,l_m,A)$ where $A$ is a central simple algebra over $k$ obtained from $\mathrm{End}(V)$ by descent. Then $D^b(X)$ admits a tilting object.
\end{cor2}
To prove that certain twisted forms of homogeneous varieties of type $B_n$ and $D_n$ admit tilting bundles, one has to show that twisted forms of quadric fibrations have tilting bundles. In the absolute case this was done by Blunk \cite{BLU} by considering the involution variety of a central simple algebra with involution of the first kind. We believe that considering sheaves of Azumaya algebras with involutions of the first kind would lead to a similar result in the relative setting. The arguments presented in this work should then produce tilting bundles for twisted forms of homogeneous varieties of type $B_n$ and $D_n$.

The paper is organized as follows: In Section 2 we recall the basic facts about geometric tilting theory. Section 3 concerns generalized Brauer--Severi varieties and we prove that they allways admit a tilting bundle. In Section 4 we generalize Kapranov's classical results on tilting bundles on flags of type $A_n$ and quadrics to the relative setting and show that relative flag varieties of type $A_n$ and certain quadric fibrations admit tilting objects. In the last section we study twisted forms of relative flags, show that some of them admit tilting objects and apply the results to construct tilting bundles on twisted forms of some homogeneous varieties.\\

{\small \textbf{Acknowledgement}. This paper is based on a part of my Ph.D. thesis which was supervised by Stefan Schr\"oer whom I would like to thank for all his support and countless hours discussing the thesis with me. I also thank Markus Perling and Alexander Samokhin for inspiring conversations and helpful suggestions.\\

{\small \textbf{Conventions}. Throughout this work $k$ is an arbitrary field unless stated otherwise. 

\section{Generalities on geometric tilting theory}
In this section we recall some facts of geometric tilting theory. We start with the definition of a tilting object (see \cite{BH}).

\begin{defi}
Let $X$ be a noetherian quasiprojective $k$-scheme and $D(\mathrm{Qcoh}(X))$ the derived category of quasicoherent sheaves on $X$. An object $\mathcal{T}\in D(\mathrm{Qcoh}(X))$ is called \emph{tilting object for $D(\mathrm{Qcoh}(X))$} if the following hold:
\begin{itemize}
      \item[\bf (i)] $\mathrm{Hom}(\mathcal{T},\mathcal{T}[i])=0$ for $i\neq 0$.
      \item[\bf (ii)] If $\mathcal{N}\in D(\mathrm{Qcoh}(X))$ satisfies $\mathbb{R}\mathrm{Hom}(\mathcal{T},\mathcal{N})=0$, then $\mathcal{N}=0$.
			\item[\bf (iii)] $\mathrm{Hom}(\mathcal{T},-)$ commutes with direct sums.
\end{itemize}
\end{defi}
\begin{rema}
If one has a titling object $\mathcal{T}$ for $D(\mathrm{Qcoh}(X))$ one can form the smallest thick subcategory containing $\mathcal{T}$ that additionally is closed under direct sums. We denote this category by $\langle\mathcal{T}\rangle$. One can show that condition (ii) from above is equivalent to $\langle\mathcal{T}\rangle=D(\mathrm{Qcoh}(X))$ (see \cite{BH}, Remark 1.2). We say $\mathcal{T}$ is \emph{generating} the derived category $D(\mathrm{Qcoh}(X))$. Furthermore, if $D(\mathrm{Qcoh}(X))$ is compactly generated and the compact objects are exactly $D^b(X)$, then to show that an object $\mathcal{T}$ generates $D(\mathrm{Qcoh}(X))$ is equivalent to show that it generates $D^b(X)$, i.e., that the smallest thick subcategory containing $\mathcal{T}$ that additionally is closed under direct sums equals $D^b(X)$ (\cite{NEE}, see also \cite{BO3}, Theorem 2.1.2).
\end{rema}
Note that if $X$ is for instance a smooth projective and integral $k$-scheme, the derived category $D(\mathrm{Qcoh}(X))$ is compactly generated and the compact objects are exactly $D^b(X)$ (see \cite{NEE}, Theorem 2.5). In this case an object $\mathcal{T}$ generates $D(\mathrm{Qcoh}(X))$ if and only if it generates $D^b(X)$. Since the natural functor $D^b(X)\rightarrow D(\mathrm{Qcoh}(X))$ is fully faithfull (see \cite{HUY}), an object $\mathcal{T}$ lying in the subcategory $D^b(X)$ is a tilting object if and only if $\mathcal{T}$ generates $D^b(X)$ and $\mathrm{Hom}_{D^b(X)}(\mathcal{T},\mathcal{T}[i])=0$ for $i\neq 0$. If the tilting object $\mathcal{T}$ is a sheaf, the above definition coincides with the definition of a tilting sheaf given in \cite{B}. In this case the tilting object is called \emph{tilting sheaf} on $X$ or in $D^b(X)$. If it is a locally free sheaf we simply say that $\mathcal{T}$ is a \emph{tilting bundle}.\\
 
Now one has the following well-known tilting correspondence (see \cite{HVB}, Theorem 7.6).
\begin{thm}
Let $X$ be projective over a finitely generated $k$-algebra $R$. Suppose $X$ admits a tilting object $\mathcal{T}$ and set $A=\mathrm{End}(\mathcal{T})$. Then the following hold:
\begin{itemize}
      \item[\bf (i)] There is an equivalence $\mathbb{R}\mathrm{Hom}(\mathcal{T},-):D(\mathrm{Qcoh}(X))\stackrel{\sim}{\rightarrow} D(\mathrm{Mod}(A))$.
      \item[\bf (ii)] The equivalence of (i) restricts to an equivalence $D^b(X)\stackrel{\sim}{\rightarrow} D^b(A)$.
			\item[\bf (iii)] If $X$ is smooth over $R$, then $A$ has finite global dimension.
		
    \end{itemize}
\end{thm}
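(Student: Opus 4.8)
The statement is the standard tilting (derived Morita) equivalence, so the plan is to follow the classical recipe. For part (i), the key observation is that the three conditions in the definition of a tilting object say exactly that $\mathcal{T}$ is a \emph{compact generator} of the compactly generated triangulated category $D(\mathrm{Qcoh}(X))$: condition (iii) of the definition is compactness, and by the Remark above condition (ii) is generation. I would first note that the dg endomorphism algebra $\mathbb{R}\mathrm{End}(\mathcal{T})$ is formal, because its cohomology $\mathrm{Hom}(\mathcal{T},\mathcal{T}[\ast])$ is, by condition (i), concentrated in degree $0$ with value $A$, so that the truncation maps $A\leftarrow\tau_{\le 0}\mathbb{R}\mathrm{End}(\mathcal{T})\to\mathbb{R}\mathrm{End}(\mathcal{T})$ are quasi-isomorphisms of dg algebras and hence $D(\mathrm{Mod}(A))\simeq D(\mathbb{R}\mathrm{End}(\mathcal{T}))$. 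That $F:=\mathbb{R}\mathrm{Hom}(\mathcal{T},-)$ is then an equivalence is Keller's recognition theorem for algebraic compactly generated categories with a compact generator; to keep things self-contained I would instead argue directly: $F$ has a left adjoint $G:=-\otimes^{\mathbb{L}}_{A}\mathcal{T}$ by the derived tensor--hom adjunction ($\mathcal{T}$ regarded as an $(\mathcal{O}_X,A)$-bimodule), $F$ commutes with arbitrary coproducts since $\mathcal{T}$ is compact and $G$ does so automatically, the counit $GF\to\mathrm{id}$ is an isomorphism at $\mathcal{T}$ (using $F\mathcal{T}\cong A$, again condition (i)) and the unit $\mathrm{id}\to FG$ is an isomorphism at $A$; since all these functors are triangulated and coproduct-preserving and $\mathcal{T}$, $A$ generate the respective categories, the full subcategories on which the unit and counit are isomorphisms are everything, so $F$ is an equivalence with quasi-inverse $G$.

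For part (ii), the goal is to see that $F$ identifies $D^b(\mathrm{coh}(X))$ with $D^b(\mathrm{mod}(A))$. First, since $R$ is a finitely generated $k$-algebra it is noetherian (Hilbert basis theorem), hence $X$ is noetherian and $A=\Gamma(X,\mathcal{H}om(\mathcal{T},\mathcal{T}))$ is a module-finite $R$-algebra, in particular noetherian, so that $D^b(A):=D^b(\mathrm{mod}(A))$ makes sense. As $\mathcal{T}$ is a perfect complex one has $F(M)\cong\mathbb{R}\Gamma(X,\mathcal{T}^{\vee}\otimes^{\mathbb{L}}M)$, so for $M\in D^b(\mathrm{coh}(X))$ the complex $\mathcal{T}^{\vee}\otimes^{\mathbb{L}}M$ is again bounded coherent and $\mathbb{R}\Gamma$ of it has bounded, $R$-coherent cohomology by the finiteness theorem for the proper morphism $X\to\mathrm{Spec}\,R$; these being $A$-modules with $A$ module-finite over $R$ they are finitely generated over $A$, so $F$ carries $D^b(\mathrm{coh}(X))$ into $D^b(\mathrm{mod}(A))$. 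Similarly, and more easily, $F$ restricts to an equivalence $D^{-}(\mathrm{coh}(X))\simeq D^{-}(\mathrm{mod}(A))$, since $G$ sends a bounded-above complex of finitely generated free $A$-modules to the bounded-above coherent complex obtained by replacing each free term by the corresponding sum of copies of $\mathcal{T}$. What remains is essential surjectivity of $F$ onto $D^b(\mathrm{mod}(A))$, equivalently that $G$ preserves boundedness; this is the one point that requires genuine work, and I would settle it as in \cite{HVB}, using the perfectness of $\mathcal{T}$ together with the properness of $X$ over $\mathrm{Spec}\,R$.

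For part (iii), suppose $X$ is smooth over $R$ with $R$ regular --- which holds in all the applications, where $R=k$ --- so that $X$ is a regular scheme and $D^b(\mathrm{coh}(X))=D^{\mathrm{perf}}(X)$; transporting along the equivalence of (ii) gives $D^b(\mathrm{mod}(A))=D^{\mathrm{perf}}(A)$, i.e. every finitely generated $A$-module has finite projective dimension. To upgrade this to a uniform bound I would, in the case $R=k$ (the general case being analogous, localized over the maximal ideals of $R$), use that $A$ is then a finite-dimensional $k$-algebra, so that $\mathrm{gl.dim}\,A=\sup\{\,i:\mathrm{Ext}^i_A(S,S')\neq 0\,\}$ over the finitely many simple $A$-modules $S,S'$; via the equivalence, $\mathrm{Ext}^i_A(S,S')\cong\mathrm{Hom}_{D^b(X)}(E,E'[i])$ for fixed objects $E,E'\in D^b(\mathrm{coh}(X))$, which vanishes once $|i|$ exceeds $\dim X$ plus the homological amplitudes of $E$ and $E'$, so the supremum is finite.

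The conceptual core of the proof is part (i) --- the derived Morita equivalence attached to a compact generator without self-extensions --- which I expect to go through with only routine bookkeeping. The single genuinely delicate step is the essential surjectivity onto $D^b(\mathrm{mod}(A))$ in part (ii), i.e. that the quasi-inverse $G$ preserves boundedness, and this is exactly where the perfectness of $\mathcal{T}$ and the properness of $X$ over $\mathrm{Spec}\,R$ are used.
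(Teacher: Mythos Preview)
The paper does not supply its own proof of this theorem: it records it as the ``well-known tilting correspondence'' and cites \cite{HVB}, Theorem~7.6, without further argument. Your sketch is a correct outline of exactly the standard proof behind that citation --- compactness and generation of $\mathcal{T}$ together with the Ext-vanishing give formality of $\mathbb{R}\mathrm{End}(\mathcal{T})$ and then the derived Morita equivalence via the unit/counit argument (or Keller's theorem); restriction to the bounded coherent level uses perfectness of $\mathcal{T}$ and properness of $X\to\mathrm{Spec}(R)$, with the nontrivial direction being that $G$ preserves boundedness; and finite global dimension in (iii) comes from identifying $D^b(\mathrm{mod}(A))$ with $D^{\mathrm{perf}}(A)$ once $X$ is regular. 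So there is nothing to compare: your approach \emph{is} the one the paper is invoking by reference.

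One minor point worth keeping: you correctly flag that ``$X$ smooth over $R$'' only yields $X$ regular when $R$ is regular, so your argument for (iii) tacitly adds that hypothesis. In every application in the paper one has $R=k$ a field, so this is harmless, and the cited source effectively works under the same assumption; but it is a genuine gap between the theorem as literally stated and the argument, and you are right to note it rather than paper over it.
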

Assuming the existence of a tilting object $\mathcal{T}\in D(\mathrm{Qcoh}(X))$, where $X$ is a smooth projective and integral $k$-scheme, Theorem 2.3 gives an equivalence 
\begin{center}
$\mathbb{R}\mathrm{Hom}(\mathcal{T},-):D^b(X)\longrightarrow D^b(A)$,
\end{center} 
where $A=\mathrm{End}(\mathcal{T})$ is a finite dimensional $k$-algebra. If the field $k$ is supposed to be algebraically closed, any finite-dimensional $k$-algebra $A$ admits a \emph{complete set of primitive orthogonal idempotents} $e_1,...,e_n$ (see \cite{ASS}, I.4). Idempotents $e_1,...,e_n$ are called \emph{orthogonal} if $e_ie_j=e_je_i=0$ for $i\neq j$ and \emph{complete} if $e_1+...+e_n=1$. Furthermore, an idempotent $e$ is called \emph{primitive} if it cannot be written as a sum of two non-zero orthogonal idempotents. Now let $e_1,...,e_n$ be the complete set of primitive orthogonal idempotemts of the above endomorphism algebra $A$. Associated to $A$, there is a finite-dimensional $k$-algebra $A'$ with a complete set of primitive orthogonal idempotents $e'_1,...,e'_r$ such that $e'_iA'=e'_jA'$ as right $A$-modules only if $i=j$ (see \cite{ASS}, I.6, Definition 6.3). There is an equivalence of categories between $\mathrm{mod}(A)$ and $\mathrm{mod}(A')$ (see \cite{ASS}, I.6, Corollary 6.10). Now to every such algebra $A'$, with $e'_iA'=e'_jA'$ as right $A$-modules only if $i=j$, one can associate a quiver with relations $(Q,R)$ as follows: The set $Q_0$ is given by the set $e'_1,...,e'_r$ and the number of arrows from $e'_i$ to $e'_j$ is given by $\mathrm{Ext}^1_{A'}(S_i,S_j)$, where $S_l=e'_lA'/e'_l\mathrm{rad}(A')$ (see \cite{ASS}, II.3, Definition 3.1, see also \cite{ARS}, p.52 and Proposition 1.14 ). Note that this quiver does not depend on the choice of the set complete idempotents (see \cite{ASS}, II.3, Lemma 3.2). Moreover, the quiver $(Q,R)$ is uniquely determined up to isomorphism by $A'$ and the path algebra of $(Q,R)$ is isomorphic to $A'$ (see \cite{ARS}, III Theorem 1.9, Corollary 1.10). This yields an equivalence between $\mathrm{mod}(A)$ and $\mathrm{mod}(kQ/\langle R \rangle)$ and hence between $D^b(X)$ and $D^b(\mathrm{mod}(kQ/\langle R \rangle))$. Under this equivalence, each projective module $e'_iA'$ is mapped to a direct summand $\mathcal{E}_i$ of the tilting object $\mathcal{T}$ and the direct sum $\mathcal{T'}=\bigoplus^r_{i=1}\mathcal{E}_i$ is again a tilting object for $D^b(X)$. The difference between $\mathcal{T}$ and $\mathcal{T}'$ is that $\mathcal{T}$ may contain several copies of $\mathcal{E}_i$. This equivalence between $D^b(X)$ and $D^b(\mathrm{mod}(kQ/\langle R \rangle))$ now enables to apply representation-theoretical techniques to investigate the derived category of coherent sheaves on $X$. As a classical example we consider the tilting bundle $\mathcal{T}=\mathcal{O}_{\mathbb{P}^1}\oplus \mathcal{O}_{\mathbb{P}^1}(1)$ on the projective line $\mathbb{P}^1$. The corresponding quiver consists of two vertices and two arrows from the first vertex to the second $1\rightrightarrows 2$ and the representations of this quiver were studied by Kronecker and are well-known. For details and further examples of quivers related to tilting objects we refer to \cite{B}, \cite{BO}, \cite{BR}, \cite{BR1}, \cite{CS}, \cite{C}, \cite{ME} and \cite{PE1}.

Next we state some well-known facts concerning tilting objects. 
\begin{prop}
Let $X$ be a smooth projective and integral $k$-scheme and $\mathcal{T}=\bigoplus^n_{i=1}\mathcal{T}_i\in D^b(X)$ a tilting object. Then for all integers $r_i>0$ the object $\bigoplus^n_{i=1}\mathcal{T}_i^{\oplus r_i}$ is a tilting object too.
\end{prop}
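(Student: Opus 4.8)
The plan is to verify the three defining conditions of a tilting object for $\mathcal{T}'=\bigoplus_{i=1}^n\mathcal{T}_i^{\oplus r_i}$, using that $\mathcal{T}=\bigoplus_{i=1}^n\mathcal{T}_i$ already satisfies them. First I would observe that $\langle\mathcal{T}'\rangle=\langle\mathcal{T}\rangle$: the smallest thick subcategory closed under direct sums containing $\mathcal{T}'$ contains each summand $\mathcal{T}_i$ (being a direct summand of $\mathcal{T}_i^{\oplus r_i}$, hence of $\mathcal{T}'$, and thick subcategories are closed under direct summands), hence contains $\mathcal{T}$; conversely it contains every $\mathcal{T}_i^{\oplus r_i}$ as a finite direct sum of copies of $\mathcal{T}_i$, hence contains $\mathcal{T}'$. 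By Remark 2.2, condition (ii) for $\mathcal{T}$ is equivalent to $\langle\mathcal{T}\rangle=D(\mathrm{Qcoh}(X))$, so $\langle\mathcal{T}'\rangle=D(\mathrm{Qcoh}(X))$ and condition (ii) holds for $\mathcal{T}'$.

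Next, for condition (i), I would use additivity of $\mathrm{Hom}$ in both variables: for $i\neq 0$,
\[
\mathrm{Hom}(\mathcal{T}',\mathcal{T}'[i])=\bigoplus_{a,b}\mathrm{Hom}(\mathcal{T}_a,\mathcal{T}_b[i])^{\oplus r_a r_b},
\]
and each summand $\mathrm{Hom}(\mathcal{T}_a,\mathcal{T}_b[i])$ is a direct summand of $\mathrm{Hom}(\mathcal{T},\mathcal{T}[i])=0$, hence vanishes. For condition (iii), $\mathrm{Hom}(\mathcal{T}',-)=\bigoplus_{i}\mathrm{Hom}(\mathcal{T}_i,-)^{\oplus r_i}$ is a finite direct sum of functors each of which commutes with direct sums (being a direct summand of $\mathrm{Hom}(\mathcal{T},-)$, which does by hypothesis), and a finite direct sum of such functors again commutes with direct sums. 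Since $X$ is smooth projective and integral, $\mathcal{T}'\in D^b(X)$ lies in the full subcategory of compact objects, so by the discussion after Remark 2.2 it suffices to check generation of $D^b(X)$, which is subsumed in the argument above.

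There is no real obstacle here: the statement is essentially formal, the only mild point being to phrase the direct-summand closure of thick subcategories and the additivity of $\mathrm{Hom}$ carefully so that the three conditions transfer. One could alternatively invoke Proposition 2.4 implicitly via the tilting correspondence of Theorem 2.3 — passing to $A'$ as in the discussion following that theorem already shows $\mathcal{T}$ and $\mathcal{T}'$ generate the same subcategory — but the direct verification above is shorter and characteristic-free, so that is the route I would take.
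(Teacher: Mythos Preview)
Your argument is correct. The paper itself does not give a proof of this proposition: it is introduced with the phrase ``Next we state some well-known facts concerning tilting objects'' and no argument is supplied. Your direct verification of conditions (i)--(iii) via additivity of $\mathrm{Hom}$ and the closure of thick subcategories under summands and finite sums is exactly the standard justification one would expect, and it matches the spirit in which the paper later invokes the result (e.g.\ in the proofs of Theorems~3.1 and~3.3). The final paragraph mentioning Theorem~2.3 and the algebra $A'$ is unnecessary here; the elementary route you already gave suffices.
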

\begin{prop}
Let $X$ be a smooth projective and integral $k$-scheme and $\mathcal{T}\in D^b(X)$ a tilting object. Then for all invertible sheaves $\mathcal{L}$ on $X$, the object $\mathcal{T}\otimes\mathcal{L}$ is a tilting object too.
\end{prop}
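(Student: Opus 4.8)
The plan is to use the elementary fact that tensoring with an invertible sheaf is an exact autoequivalence of the derived category, and then to check that each of the three conditions in Definition 2.1 is preserved under it. Write $\Phi=(-)\otimes\mathcal{L}$. Since $\mathcal{L}$ is a line bundle, $\Phi$ is an exact functor on $\mathrm{Qcoh}(X)$ with exact quasi-inverse $(-)\otimes\mathcal{L}^{-1}$; hence it induces an autoequivalence of $D(\mathrm{Qcoh}(X))$ that commutes with shifts, is fully faithful, and restricts to an autoequivalence of $D^b(X)$. It then remains to verify (i)--(iii) for $\Phi(\mathcal{T})=\mathcal{T}\otimes\mathcal{L}$.

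First, for (i): since $\Phi$ is fully faithful and commutes with $[i]$, we get $\mathrm{Hom}(\mathcal{T}\otimes\mathcal{L},(\mathcal{T}\otimes\mathcal{L})[i])\cong\mathrm{Hom}(\mathcal{T},\mathcal{T}[i])$, which vanishes for $i\neq 0$ by hypothesis. Next, for (ii): if $\mathcal{N}\in D(\mathrm{Qcoh}(X))$ satisfies $\mathbb{R}\mathrm{Hom}(\mathcal{T}\otimes\mathcal{L},\mathcal{N})=0$, then the standard hom--tensor identity $\mathbb{R}\mathrm{Hom}(\mathcal{F}\otimes\mathcal{L},\mathcal{G})\cong\mathbb{R}\mathrm{Hom}(\mathcal{F},\mathcal{G}\otimes\mathcal{L}^{-1})$ yields $\mathbb{R}\mathrm{Hom}(\mathcal{T},\mathcal{N}\otimes\mathcal{L}^{-1})=0$; condition (ii) for $\mathcal{T}$ gives $\mathcal{N}\otimes\mathcal{L}^{-1}=0$, and since $\Phi$ is an equivalence this forces $\mathcal{N}=0$. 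Finally, for (iii): for any family $(\mathcal{N}_j)$ in $D(\mathrm{Qcoh}(X))$ we have $\mathrm{Hom}(\mathcal{T}\otimes\mathcal{L},\bigoplus_j\mathcal{N}_j)\cong\mathrm{Hom}(\mathcal{T},\bigoplus_j(\mathcal{N}_j\otimes\mathcal{L}^{-1}))\cong\bigoplus_j\mathrm{Hom}(\mathcal{T},\mathcal{N}_j\otimes\mathcal{L}^{-1})\cong\bigoplus_j\mathrm{Hom}(\mathcal{T}\otimes\mathcal{L},\mathcal{N}_j)$, where the outer isomorphisms are again the hom--tensor identity and the middle one is condition (iii) for $\mathcal{T}$, using that $(-)\otimes\mathcal{L}^{-1}$ commutes with arbitrary direct sums.

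There is essentially no obstacle: the argument is purely formal once one knows $\Phi$ is an autoequivalence of $D(\mathrm{Qcoh}(X))$ restricting to one of $D^b(X)$. The only points worth a word of justification, both entirely standard for a line bundle, are that $\Phi$ preserves boundedness and coherence (so that $\mathcal{T}\otimes\mathcal{L}$ again lies in $D^b(X)$) and that tensoring with $\mathcal{L}^{-1}$ commutes with direct sums, which is exactly what makes condition (iii) transfer rather than just (i) and (ii).
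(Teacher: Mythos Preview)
Your argument is correct and is exactly the standard verification one would expect. Note, however, that the paper does not actually prove this proposition: it is listed among ``well-known facts concerning tilting objects'' and stated without proof, so there is no paper proof to compare against. Your write-up is a clean, self-contained justification of what the paper takes for granted.
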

The next two results are folklore. The relative versions of these results are proved in \cite{BH}, Proposition 2.6, 2.9. We start with the proposition stating that the class of tilting objects is closed under taking products. Considering the projections $p:X\times_k Y\rightarrow X$ and $q:X\times_k Y\rightarrow Y$, we write for $p^*\mathcal{F}\otimes^L q^*\mathcal{G}$ simply $\mathcal{F}\boxtimes\mathcal{G}$. With this notation one gets the following:
\begin{prop}
Let $X$ and $Y$ be smooth projective and integral $k$-schemes and $\mathcal{T}_X\in D^b(X)$ and $\mathcal{T}_Y\in D^b(Y)$ tilting objects for $D(\mathrm{Qcoh}(X))$ and $D(\mathrm{Qcoh}(Y))$ respectively, then $\mathcal{T}=\mathcal{T}_X\boxtimes \mathcal{T}_Y$ is a tilting object for $D(\mathrm{Qcoh}(X\times Y))$.
\end{prop}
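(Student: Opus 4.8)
The plan is to verify the three conditions of Definition 2.1 for $\mathcal{T}=\mathcal{T}_X\boxtimes\mathcal{T}_Y=p^{*}\mathcal{T}_X\otimes^{L}q^{*}\mathcal{T}_Y$, using the K\"unneth formula on $X\times_k Y$ together with the generation criterion recorded in Remark 2.2. As a preliminary remark: since $p$ and $q$ are flat, $p^{*}\mathcal{T}_X$ and $q^{*}\mathcal{T}_Y$ are pullbacks of perfect complexes (here $\mathcal{T}_X,\mathcal{T}_Y$ are perfect because $X,Y$ are regular), hence perfect, hence so is their derived tensor product $\mathcal{T}$; in particular $\mathcal{T}\in D^b(X\times Y)$ and $\mathcal{T}$ is a compact object of $D(\mathrm{Qcoh}(X\times Y))$, so $\mathrm{Hom}(\mathcal{T},-)$ commutes with direct sums and condition (iii) holds.

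For condition (i) I would invoke the K\"unneth isomorphism
\[
\mathbb{R}\mathrm{Hom}_{X\times Y}(\mathcal{T}_X\boxtimes\mathcal{T}_Y,\;\mathcal{T}_X\boxtimes\mathcal{T}_Y)\;\cong\;\mathbb{R}\mathrm{Hom}_X(\mathcal{T}_X,\mathcal{T}_X)\,\otimes^{L}_k\,\mathbb{R}\mathrm{Hom}_Y(\mathcal{T}_Y,\mathcal{T}_Y),
\]
valid since $\mathcal{T}_X$ and $\mathcal{T}_Y$ are perfect (replace the internal $\mathrm{Hom}$'s by duals, use the projection formula, and apply the cohomological K\"unneth formula over the field $k$). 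By condition (i) for $\mathcal{T}_X$ and $\mathcal{T}_Y$ the two factors on the right have cohomology concentrated in degree $0$, equal to $\mathrm{End}(\mathcal{T}_X)$ and $\mathrm{End}(\mathcal{T}_Y)$; as these are $k$-vector spaces there are no higher $\mathrm{Tor}$ terms, so the derived tensor product is again concentrated in degree $0$. Thus $\mathrm{Hom}(\mathcal{T},\mathcal{T}[i])=0$ for $i\neq 0$, and incidentally $\mathrm{End}(\mathcal{T})\cong\mathrm{End}(\mathcal{T}_X)\otimes_k\mathrm{End}(\mathcal{T}_Y)$.

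For condition (ii) it is enough, by Remark 2.2, to prove $\langle\mathcal{T}\rangle=D(\mathrm{Qcoh}(X\times Y))$. The external product functor $(-)\boxtimes\mathcal{T}_Y\colon D(\mathrm{Qcoh}(X))\to D(\mathrm{Qcoh}(X\times Y))$ is triangulated, preserves direct summands, and commutes with arbitrary direct sums (since $\otimes^{L}$ commutes with direct sums in each variable and $p^{*}$ commutes with colimits). Hence its preimage of the thick, direct-sum-closed subcategory $\langle\mathcal{T}\rangle$ is a thick, direct-sum-closed subcategory of $D(\mathrm{Qcoh}(X))$ that contains $\mathcal{T}_X$; since $\mathcal{T}_X$ is tilting, it is all of $D(\mathrm{Qcoh}(X))$. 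Therefore $\mathcal{F}\boxtimes\mathcal{T}_Y\in\langle\mathcal{T}\rangle$ for every $\mathcal{F}\in D(\mathrm{Qcoh}(X))$, and applying the same reasoning to $\mathcal{F}\boxtimes(-)$ for each such $\mathcal{F}$ gives $\mathcal{F}\boxtimes\mathcal{G}\in\langle\mathcal{T}\rangle$ for all $\mathcal{F}\in D(\mathrm{Qcoh}(X))$ and $\mathcal{G}\in D(\mathrm{Qcoh}(Y))$. Fixing projective embeddings of $X$ and $Y$ we obtain $\mathcal{O}_{X\times Y}(j)=\mathcal{O}_X(j)\boxtimes\mathcal{O}_Y(j)\in\langle\mathcal{T}\rangle$ for all $j$, and since the twists $\mathcal{O}_{X\times Y}(j)$ generate $D(\mathrm{Qcoh}(X\times Y))$ we conclude $\langle\mathcal{T}\rangle=D(\mathrm{Qcoh}(X\times Y))$.

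The step I expect to be the main obstacle is making the generation argument in (ii) fully rigorous in the unbounded derived category: one has to check carefully that the external product functors preserve thick, direct-sum-closed subcategories, and one needs the standard fact (not established in the excerpt) that the twists $\mathcal{O}_{X\times Y}(j)$ generate $D(\mathrm{Qcoh}(X\times Y))$. A convenient way to avoid the unbounded subtleties is to use the last sentence of Remark 2.2 to work inside $D^b$ throughout: verify (i) on $D^b(X\times Y)$ by the same K\"unneth computation, and show that the thick closure of $\mathcal{T}$ equals $D^b(X\times Y)$ using that $\{\mathcal{O}_X(j)\}_j$ and $\{\mathcal{O}_Y(j)\}_j$ generate $D^b(X)$ and $D^b(Y)$ and that every $\mathcal{O}_X(j)\boxtimes\mathcal{O}_Y(j')$ lies in the thick closure of $\mathcal{T}$, by the bounded version of the external product argument above.
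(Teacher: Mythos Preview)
The paper does not give its own proof of this proposition: it records the result as folklore and refers to \cite{BH}, Proposition~2.6, for the relative version. Your direct verification is correct and is essentially the standard argument one finds in that reference. The K\"unneth computation for condition~(i) and the compactness argument for~(iii) are unproblematic; your generation argument for~(ii) is also fine, and in fact you have done slightly more work than necessary: once you have shown that $\mathcal{F}\boxtimes\mathcal{G}\in\langle\mathcal{T}\rangle$ for all $\mathcal{F},\mathcal{G}$, you already have all sheaves of the form $\mathcal{O}_X(i)\boxtimes\mathcal{O}_Y(j)$ in $\langle\mathcal{T}\rangle$, and these form an ample family on $X\times Y$ and hence generate $D(\mathrm{Qcoh}(X\times Y))$; there is no need to pass through the Segre embedding. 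Your closing remark about reducing to $D^b$ via Remark~2.2 is a sensible way to sidestep any unease about the unbounded setting, though the argument as written is already sound.
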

\begin{prop}
Let $X$ be a smooth projective $k$-scheme admitting a tilting object $\mathcal{T}\in D^b(X)$ and $k\subset L$ a separable field extension. Then $\mathcal{T}\otimes_k L$ is a tilting object too. 
\end{prop}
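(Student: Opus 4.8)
The plan is to check the three conditions of Definition~2.1 for $\mathcal{T}_L:=\mathcal{T}\otimes_k L$ on $X_L:=X\times_k\mathrm{Spec}\,L$ by descending along the projection $f\colon X_L\to X$, which is \emph{affine} and \emph{faithfully flat}, being the base change of $\mathrm{Spec}\,L\to\mathrm{Spec}\,k$. Since $k\subset L$ is separable, $X_L$ is again a smooth projective $L$-scheme; in particular it is a noetherian quasiprojective $L$-scheme, so Definition~2.1 applies to it, and $D^b(X_L)=\mathrm{Perf}(X_L)$. Moreover $\mathcal{T}_L=Lf^*\mathcal{T}=f^*\mathcal{T}$ (flatness of $f$) lies in $D^b(X_L)$ and, being the pullback of the perfect complex $\mathcal{T}$ (recall $X$ smooth forces $D^b(X)=\mathrm{Perf}(X)$), is itself perfect; hence $\mathcal{T}_L$ is a compact object of $D(\mathrm{Qcoh}(X_L))$, which is precisely condition~(iii). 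So it remains to verify (i) and (ii).

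For~(i), I would use the adjunction $Lf^*\dashv Rf_*$ together with the exactness of $f_*$ (as $f$ is affine) to obtain
\[
\mathbb{R}\mathrm{Hom}_{X_L}(\mathcal{T}_L,\mathcal{T}_L)\;\cong\;\mathbb{R}\mathrm{Hom}_X(\mathcal{T},f_*f^*\mathcal{T})\;=\;\mathbb{R}\mathrm{Hom}_X(\mathcal{T},\mathcal{T}\otimes_k L),
\]
and then that $\mathbb{R}\mathrm{Hom}_X(\mathcal{T},-)$ commutes with $-\otimes_k L$: since $\mathcal{T}$ is perfect one has $\mathbb{R}\mathcal{H}om_X(\mathcal{T},\mathcal{G})\cong\mathcal{T}^{\vee}\otimes^L\mathcal{G}$, which commutes with $-\otimes_k L$, and since $X$ is noetherian, $\mathbb{R}\Gamma(X,-)$ commutes with the filtered colimit $L=\varinjlim V$ over the finite-dimensional $k$-subspaces $V\subset L$, hence with $-\otimes_k L$ as well. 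This gives $\mathbb{R}\mathrm{Hom}_{X_L}(\mathcal{T}_L,\mathcal{T}_L)\cong\mathbb{R}\mathrm{Hom}_X(\mathcal{T},\mathcal{T})\otimes_k L$; as $\mathcal{T}$ is tilting the right-hand side is concentrated in degree $0$, and since $-\otimes_k L$ is exact over $k$ so is the left-hand side, i.e.\ $\mathrm{Hom}(\mathcal{T}_L,\mathcal{T}_L[i])=0$ for $i\neq 0$.

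For~(ii), take $\mathcal{N}\in D(\mathrm{Qcoh}(X_L))$ with $\mathbb{R}\mathrm{Hom}(\mathcal{T}_L,\mathcal{N})=0$. The same adjunction together with $Rf_*=f_*$ gives $\mathbb{R}\mathrm{Hom}_X(\mathcal{T},f_*\mathcal{N})\cong\mathbb{R}\mathrm{Hom}_{X_L}(\mathcal{T}_L,\mathcal{N})=0$; since $\mathcal{T}$ is a tilting object on $X$ this forces $f_*\mathcal{N}=0$. Finally $f_*$ is conservative: over an affine open $\mathrm{Spec}\,R\subset X$ it is restriction of scalars along the faithful ring map $R\to R\otimes_k L$, so $f_*\mathcal{N}=0$ implies $\mathcal{N}=0$. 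This is condition~(ii) (equivalently, $\mathcal{T}_L$ generates $D(\mathrm{Qcoh}(X_L))$, cf.\ Remark~2.2).

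There is no deep obstacle here: everything reduces to two standard facts, the compatibility of $\mathbb{R}\mathrm{Hom}$ with the flat base change $f$ (which uses that $\mathcal{T}$ is perfect, guaranteed by smoothness of $X$, together with noetherianness of $X$) and the conservativity of the affine pushforward $f_*$. The only point I would write out carefully is the identification $\mathbb{R}\mathrm{Hom}_X(\mathcal{T},\mathcal{T}\otimes_k L)\cong\mathbb{R}\mathrm{Hom}_X(\mathcal{T},\mathcal{T})\otimes_k L$; the separability hypothesis on $k\subset L$ enters only through the observation that $X_L$ remains a smooth projective $L$-scheme, keeping us within the setting of Section~2.
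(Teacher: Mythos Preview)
Your argument is correct: you verify compactness via perfectness, Ext-vanishing via flat base change and the identification $\mathbb{R}\mathrm{Hom}_X(\mathcal{T},\mathcal{T}\otimes_k L)\cong\mathbb{R}\mathrm{Hom}_X(\mathcal{T},\mathcal{T})\otimes_k L$, and generation via adjunction together with conservativity of the affine pushforward $f_*$. Each step is standard and sound; the only place one might tighten the exposition is in the conservativity step, where one should note explicitly that exactness of $f_*$ gives $H^i(f_*\mathcal{N})=f_*H^i(\mathcal{N})$, so vanishing of $f_*\mathcal{N}$ in the derived category forces vanishing of each $H^i(\mathcal{N})$.

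As for comparison with the paper: the paper does not actually prove this proposition. It is stated as Proposition~2.7 among results described as ``folklore,'' with a reference to \cite{BH}, Propositions~2.6 and~2.9 for the relative versions. Your proof is thus a self-contained verification of what the paper takes for granted, and it follows exactly the approach one would expect (and the one underlying the cited reference): reduce all three tilting conditions to properties of $\mathcal{T}$ on $X$ using the adjunction $Lf^*\dashv Rf_*$ along the faithfully flat affine base change $f\colon X_L\to X$.
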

That somehow the converse of Proposition 2.7 holds is also well-known and a proof for this fact is given in \cite{NO1}. It is the following observation.
\begin{prop}
Let $X$ be a smooth, projective and integral $k$-scheme and $k\subset L$ an arbitrary field extension. Now given an object $\mathcal{R}\in D^b(X)$, suppose that $\mathcal{R}\otimes_k L$ is a tilting object on $X\otimes_k L$. Then $\mathcal{R}$ is a tilting object for $D(\mathrm{Qcoh}(X))$.
\end{prop}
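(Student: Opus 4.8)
The plan is to verify conditions (i)--(iii) of the definition directly for $\mathcal{R}$ by transporting the corresponding properties of $\mathcal{R}\otimes_kL$ along the base-change morphism $f\colon X_L:=X\otimes_kL\to X$, which is faithfully flat and affine since $k\subset L$ is a field extension and $X$ is of finite type over $k$. Because $X$ is smooth, $D^b(X)=\mathrm{Perf}(X)$, so $\mathcal{R}$ is a perfect, hence compact, object of $D(\mathrm{Qcoh}(X))$; condition (iii) is therefore automatic, as $\mathrm{Hom}(\mathcal{R},-)$ out of a compact object commutes with arbitrary direct sums. Moreover, by the Remark following the definition, $D(\mathrm{Qcoh}(X))$ is compactly generated with compact objects $D^b(X)$ (using that $X$ is smooth, projective and integral), and condition (ii) for $\mathcal{R}$ is equivalent to $\mathcal{R}$ generating $D^b(X)$; it thus suffices to show that any $\mathcal{N}\in D^b(X)$ with $\mathbb{R}\mathrm{Hom}_X(\mathcal{R},\mathcal{N})=0$ vanishes.

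The key tool is flat base change for $\mathbb{R}\mathrm{Hom}$: since $\mathcal{R}$ is perfect and $f$ is flat and affine, for every $\mathcal{F}\in D^b(X)$ the canonical map
\[
\mathbb{R}\mathrm{Hom}_X(\mathcal{R},\mathcal{F})\otimes_kL\ \longrightarrow\ \mathbb{R}\mathrm{Hom}_{X_L}(f^*\mathcal{R},f^*\mathcal{F})
\]
is an isomorphism in $D(L)$; one sees this by writing $\mathbb{R}\mathrm{Hom}_X(\mathcal{R},\mathcal{F})\cong\mathbb{R}\Gamma(X,\mathcal{R}^\vee\otimes^L\mathcal{F})$ (valid since $\mathcal{R}$ is perfect), using $f^*(\mathcal{R}^\vee\otimes^L\mathcal{F})\cong(f^*\mathcal{R})^\vee\otimes^Lf^*\mathcal{F}$, and invoking that cohomology on the Noetherian scheme $X$ commutes with the flat base change $\mathrm{Spec}\,L\to\mathrm{Spec}\,k$. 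Taking $\mathcal{F}=\mathcal{R}$ and passing to cohomology gives $\mathrm{Hom}_X(\mathcal{R},\mathcal{R}[i])\otimes_kL\cong\mathrm{Hom}_{X_L}(\mathcal{R}_L,\mathcal{R}_L[i])$, which vanishes for $i\neq0$ because $\mathcal{R}_L$ is tilting; as $k\subset L$ is faithfully flat this yields $\mathrm{Hom}_X(\mathcal{R},\mathcal{R}[i])=0$ for $i\neq0$, i.e.\ condition (i).

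For generation, let $\mathcal{N}\in D^b(X)$ with $\mathbb{R}\mathrm{Hom}_X(\mathcal{R},\mathcal{N})=0$. The displayed isomorphism with $\mathcal{F}=\mathcal{N}$ gives $\mathbb{R}\mathrm{Hom}_{X_L}(\mathcal{R}_L,\mathcal{N}_L)=0$, and since $\mathcal{R}_L$ is a tilting object on $X_L$ this forces $\mathcal{N}_L=f^*\mathcal{N}=0$. As $f$ is faithfully flat (so $f^*$ is conservative, cohomology by cohomology), we conclude $\mathcal{N}=0$; hence $\mathcal{R}$ generates $D^b(X)$ and therefore $D(\mathrm{Qcoh}(X))$, which is condition (ii). Together with (i) and (iii), $\mathcal{R}$ is a tilting object for $D(\mathrm{Qcoh}(X))$.

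I expect the only point needing genuine care to be the flat base-change isomorphism for $\mathbb{R}\mathrm{Hom}$ together with the reduction of condition (ii) from $D(\mathrm{Qcoh}(X))$ to $D^b(X)$: the former rests on $\mathcal{R}$ being perfect (which is where smoothness of $X$ enters) and on compatibility of $\mathbb{R}\Gamma$ with flat base change over the ground field, while the latter is precisely the compact-generation statement recalled in the Remark. Everything else is a routine application of the faithful flatness of $k\subset L$. (This is the argument of \cite{NO1}.)
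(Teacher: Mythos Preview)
Your argument is correct. The paper does not supply its own proof of this proposition; it simply states it and refers to \cite{NO1}, and your verification of conditions (i)--(iii) via flat base change and faithful flatness of $k\subset L$ is exactly the standard route (and, as you note, the one in \cite{NO1}).
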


In the literature (in view of the Krull--Schmidt decomposition), instead of the tilting object $\mathcal{T}$ one often studied the set $\mathcal{E}_1,...,\mathcal{E}_n$ of its indecomposable, pairwise non-isomorphic direct summands. There is a special case where all the summands form a so-called full strongly exceptional collection. Closely related to the notion of a full strongly exceptional collection is that of a semiorthogonal decomposition. We recall the definition of an exceptional collection and a semiorthogonal decomposition respectively. We follow the definition given in \cite{HUY} and refer to the work of Bondal and Orlov \cite{BO1} for further details. 
\begin{defi}
Let $X$ be a noetherian quasiprojective $k$-scheme. An object $\mathcal{E}\in D^b(X)$ is called \emph{exceptional} if $\mathrm{End}(\mathcal{E})=k$ and $\mathrm{Hom}(\mathcal{E},\mathcal{E}[l])=0$ for all $l\neq 0$. An \emph{exceptional collection} is a collection of exceptional objects $\mathcal{E}_1$,...,$\mathcal{E}_n$, satisfying
\begin{itemize}
      \item[\bf (i)] $\mathrm{Hom}(\mathcal{E}_i,\mathcal{E}_j[l])=k$ for $l=0$ and $i=j$,
      \item[\bf (ii)] $\mathrm{Hom}(\mathcal{E}_i,\mathcal{E}_j[l])=0$ for all $l\neq 0$ and $i=j$,
			\item[\bf (iii)] $\mathrm{Hom}(\mathcal{E}_i,\mathcal{E}_j[l])=0$ for all $l\in \mathbb{Z}$ if $i>j$ .
\end{itemize}
An exceptional collection is called \emph{full} if the collection generates $D^b(X)$, i.e., if the smallest thick subcategory containing $\mathcal{E}_1,...,\mathcal{E}_n$ that additionally is closed under direct sums equals $D^b(X)$. If in addition $\mathrm{Hom}(\mathcal{E}_i,\mathcal{E}_j[l])=0$ for all $i,j$ and $l\neq 0$ the collection is called \emph{strongly exceptional}.
\end{defi} 
As a generalization one has the notion of a semiorthogonal decomposition of $D^b(X)$ (see \cite{BO1} or \cite{HUY}).
\begin{defi}
Let $X$ be as above. A collection $\mathcal{D}_1,...,\mathcal{D}_r$ of full triangulated subcategories is called a \emph{semiorthogonal decomposition} for $D^b(X)$ if the following properties hold:

\begin{itemize}
      \item[\bf (i)] The inclusion $\mathcal{D}_i \subset D^b(X) $ has a right adjoint $p:D^b(X)\rightarrow \mathcal{D}_i$.
      \item[\bf (ii)] $\mathcal{D}_j\subset \mathcal{D}_i^{\perp}=\{B\in D^b(X)|\mathrm{Hom}(A,B)=0$, $\forall A\in\mathcal{D}_i\}$ for $i>j$.
			\item[\bf (iii)] The collection $\mathcal{D}_i$ generates $D^b(X)$, i.e., the smallest thick subcategory containing all $\mathcal{D}_i$ that additionally is closed under direct sums equals $D^b(X)$.
\end{itemize}
For a semiorthogonal decomposition of $D^b(X)$ we write $D^b(X)=\langle\mathcal{D}_1,...,\mathcal{D}_r\rangle$.		
\end{defi}
\begin{exam}
If we have a full exceptional collection $\mathcal{E}_1,...,\mathcal{E}_n$ in $D^b(X)$, then by Lemma 1.58 in \cite{HUY} we have that the inclusion $\langle \mathcal{E}_i \rangle\rightarrow D^b(X)$ has a right adjoint. Furthermore, condition (ii) is fulfilled for $\mathcal{D}_i=\langle \mathcal{E}_i\rangle$ and since the collection $\mathcal{E}_1,...,\mathcal{E}_n$ is full, the collection $\langle \mathcal{E}_1 \rangle,...,\langle\mathcal{E}_n\rangle$ generates $D^b(X)$. Hence a full exceptional collection $\mathcal{E}_1,...,\mathcal{E}_n$ give rise to a semiorthogonal decomposition $D^b(X)=\langle \langle \mathcal{E}_1 \rangle,...,\langle \mathcal{E}_n \rangle \rangle$. 
\end{exam} 
Clearly, the direct sum of the exceptional objects in a full strongly exceptional collection gives rise to a tilting object but the converse is not true in general. However, it is easy to verify that, after possibly reordering,  the pairwise non-isomorphic indecomposable direct summands of a tilting object form a full strongly exceptional collection, provided the summands are invertible. Exceptional collections and semiorthogonal decompositions were intensively studied and we know quite a lot of examples of schemes admitting full exceptional collections or semiorthogonal decompositions. For a comprehensive overview we refer to \cite{BO1}, \cite{KUZ4} and references therein.

The above described connection between representation theory and the derived category of coherent sheaves is not the only motivation to study derived categories with exceptional collections or tilting objects. Another motivation comes from Kontsevich's Homological Mirror Symmetry conjecture \cite{KO}, see also \cite{HUY}, 13.2. Moreover, a conjecture of Dubrovin \cite{D} states that the quantum cohomology of a smooth projective variety $X$ is generically semisimple if and only if there exists a full exceptional collection in $D^b(X)$ and the validity of this conjecture would also provide evidence for the Homological Mirror Symmetry conjecture. Although this conjecture turned out to be wrong in general, it seems that there is still a relationship between the existence of full exceptional collections and its quantum cohomology (see \cite{BA}). Motivated by the Mirror Symmetry, in the recent past full strongly exceptional collections have also been considered in physics in the context of string theory, concretely in studying so-called $D$-branes (see for instance \cite{A}). Particular interest in exceptional collections also comes from local Calabi--Yau varieties. Consider the total space $\pi:\mathbb{A}(\omega_X)\rightarrow X$ for the canonical bundle $\omega_X$. This is a local Calabi-Yau variety and it follows from results of Bridgeland \cite{BR} that a full strongly exceptional collection $\mathcal{E}_i$ on $X$ can be extended to a cyclic strongly exceptional collection if and only if the pullbacks $\pi^*\mathcal{E}_i$ give rise to a tilting object on the total space $\mathbb{A}(\omega_X)$. In this situation the endomorphism algebra of the titling object for $\mathbb{A}(\omega_X)$ gives an example of a noncommutative  resolution in the sense of \cite{VDB}.

Smooth projective and integral $k$-schemes admitting tilting objects include projective spaces \cite{BE}, flag varieties of type $A_n$ (over the complex numbers) \cite{KAP2}, Grassmannians over arbitrary fields \cite{BLB}, certain toric varieties \cite{CMR}, \cite{PRN}, rational complex surfaces \cite{HIP3} and certain iterated projective bundles and fibrations \cite{CMR1}, \cite{CRMR}. Furthermore, tilting bundles are presumed to exist on rational homogeneous varieties (see \cite{BOE}) and several results in this direction have been proved (see \cite{KUZ2} for details). For tilting bundles on some stacks we refer to \cite{NO1} and the references therein.

\section{Tilting objects for generalized Brauer--Severi varieties}
In this section we show how the \emph{indecomposable absolutely split locally free sheaves} on Brauer--Severi varieties, as introduced in \cite{NO}, Definition 4.1, very naturally give rise to a tilting object. Moreover, we show that the generalized Brauer--Severi varieties $\mathrm{BS}(d,A)$ also admit tilting objects, obtained from the tautological sheaf on $\mathrm{BS}(d,A)\otimes_k \bar{k}\simeq \mathrm{Grass}(d,n)$. Note that tilting bundles on Brauer--Severi varieties were previously constructed by Blunk \cite{BLU}, at least if the base field $k$ is of characteristic zero. We found our tilting object independently during the work on Brauer--Severi varieties and it turns out that our tilting object is characteristic-free and a direct summand of the tilting object given in \cite{BLU}. For details on Brauer--Severi varieties and central simple algebras we refer to \cite{ART}, \cite{GIS}.

Beilinson \cite{BE} showed that $\bigoplus^n_{i=0}\mathcal{O}_{\mathbb{P}^n}(i)$ is a tilting bundle on $\mathbb{P}^n$. Later Kapranov \cite{KAP2} investigated whether exceptional collections exist on homogenous varieties $X$. In loc.cit. he proved that homogenous varieties of type $A_n$ and smooth quadrics have full strongly exceptional collections and hence a tilting object. In view of this results and other evidence, it was conjectured that every rational homogeneous variety $G/P$ over an algebraically closed field of characteristic zero should posses a full exceptional collection and several results in this direction were obtained (see \cite{KUZ2}) Furthermore, it is conjectured by Catanese that $G/P$ should posses a full strongly exceptional poset and a bijection between the elements of the poset and set of Schubert varieties of $G/P$ such that the partial order of the poset is the order induced by the Bruhat--Chevalley order (see \cite{BOE}). Since Brauer--Severi and generalized Brauer--Severi varieties are twisted forms of projective spaces respectively Grassmannianns, in this section we therefore study certain simple examples of twisted forms of homogeneous varieties.\\

Recall, a \emph{Brauer--Severi variety} $X$ is a scheme of finite type over $k$ such that $X\otimes_k\bar{k}\simeq \mathbb{P}^n_{\bar{k}}$. Indeed, one can show that there is a finite Galois extension $k\subset L$ such that $X\otimes_k L\simeq \mathbb{P}^n_L$ (see \cite{GIS}, Corollary 5.1.5). A field extension $k\subset L$ such that $X\otimes_k L\simeq \mathbb{P}^n_L$ is called \emph{splitting field}. It is well-known that $n$-dimensional Brauer--Severi varieties correspond to degree $n+1$ central simple $k$-algebras via $H^1(k,\mathrm{PGL}_{n+1})$ (see \cite{GIS}, 5.2). According to the Wedderburn theorem a central simple algebra $A$ is isomorphic to a matrix algebra over a central division algebra $D$. This division algebra is unique up to isomorphism and the degree of $D$ is called the \emph{index} of $A$, denoted by $\mathrm{ind}(A)$. In \cite{NO} we classified all \emph{indecomposable absolutely split locally free sheaves} on Brauer--Severi varieties. \emph{Absolutely split} locally free sheaves on Brauer--Severi varieties are sheaves becoming isomorphic to the direct sum of invertible ones after base change to a splitting field. In particular, we proved that for a given Brauer--Severi variety $X$ over a field $k$ there exist indecomposable locally free sheaves $\mathcal{W}_i$ such that $\mathcal{W}_i\otimes_k L\simeq \mathcal{O}_{\mathbb{P}_L^n}(i)^{\oplus \mathrm{ind}(A^{\otimes i})}$, where $A$ is the central simple algebra corresponding to $X$ and $k\subset L$ an arbitrary splitting field (see \cite{NO}, Section 6). Note that the locally free sheaves $\mathcal{W}_i$ are unique up to isomorphism (\cite{NO}, Proposition 3.3) and the indecomposable absolutely split locally free sheaves on $X$. With this notation we now prove: 
\begin{thm}
Let $X$ be a $n$-dimensional Brauer--Severi variety over an arbitrary field $k$ and let $A$ be the corresponding central simple $k$-algebra. Let $\mathcal{W}_i$ be the locally free sheaves from above. Then the locally free sheaf $\mathcal{T}=\bigoplus^{n}_{i=0} \mathcal{W}_i$ is a tilting bundle for $D^b(X)$.
\end{thm}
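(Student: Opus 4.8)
The strategy is descent: work over a splitting field $L$ where $X\otimes_k L\simeq\mathbb{P}^n_L$, verify the two defining properties of a tilting bundle (the Hom-vanishing and the generation) after base change, and then pull both back to $X$ using the propositions from Section 2. The point is that $\mathcal{T}\otimes_k L=\bigoplus_{i=0}^n\mathcal{W}_i\otimes_k L=\bigoplus_{i=0}^n\mathcal{O}_{\mathbb{P}^n_L}(i)^{\oplus\,\mathrm{ind}(A^{\otimes i})}$ by the defining property of the sheaves $\mathcal{W}_i$ recalled above. First I would note that by Beilinson's theorem $\bigoplus_{i=0}^n\mathcal{O}_{\mathbb{P}^n_L}(i)$ is a tilting bundle on $\mathbb{P}^n_L$, and then invoke Proposition 2.4: replacing each summand $\mathcal{O}_{\mathbb{P}^n_L}(i)$ by the multiple $\mathcal{O}_{\mathbb{P}^n_L}(i)^{\oplus\,\mathrm{ind}(A^{\otimes i})}$ (all multiplicities being $>0$) again yields a tilting bundle on $\mathbb{P}^n_L$. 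Hence $\mathcal{T}\otimes_k L$ is a tilting bundle on $X\otimes_k L$.

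Next I would choose $L$ to be a \emph{separable} (indeed finite Galois) splitting field; such a field exists by Corollary 5.1.5 of \cite{GIS}. Now the converse-descent statement, Proposition 2.9, applies directly: since $X$ is smooth, projective and integral over $k$, $\mathcal{T}\in D^b(X)$, and $\mathcal{T}\otimes_k L$ is a tilting object on $X\otimes_k L$, it follows that $\mathcal{T}$ is a tilting object for $D(\mathrm{Qcoh}(X))$. Because $\mathcal{T}=\bigoplus_{i=0}^n\mathcal{W}_i$ is by construction a locally free sheaf, it is in fact a tilting bundle, which is the assertion. Strictly speaking Proposition 2.9 already packages the descent argument, so at this level of detail the proof is essentially a two-line deduction from Propositions 2.4 and 2.9 together with Beilinson's theorem; if one wanted to avoid citing Proposition 2.9 one would instead argue that $\mathrm{Hom}_{D^b(X)}(\mathcal{T},\mathcal{T}[j])\otimes_k L\cong\mathrm{Hom}_{D^b(X\otimes_k L)}(\mathcal{T}\otimes_k L,\mathcal{T}\otimes_k L[j])$ by flat base change, so that the Ext-vanishing descends, and that a cone on a nonzero map detecting non-generation over $k$ would base-change to a nonzero such object over $L$, contradicting generation there.

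The only genuine subtlety — and the step I expect to be the main obstacle to writing cleanly — is the generation half of the tilting condition, i.e.\ condition (ii) of Definition 2.1, under descent. The Hom-vanishing (i) descends painlessly by faithful flatness of $k\subset L$. For generation one must argue that if $\langle\mathcal{T}\otimes_k L\rangle=D^b(X\otimes_k L)$ then $\langle\mathcal{T}\rangle=D^b(X)$; the cleanest route is exactly to appeal to Proposition 2.9 (whose proof in \cite{NO1} handles this), but if one unwinds it the key input is that the base-change functor $D^b(X)\to D^b(X\otimes_k L)$ is conservative and that a thick subcategory of $D^b(X)$ closed under direct sums is detected after the faithfully flat base change $k\subset L$. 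I would therefore present the proof as: (1) identify $\mathcal{T}\otimes_k L$; (2) cite Beilinson plus Proposition 2.4 to see it is tilting on $X\otimes_k L$; (3) cite Proposition 2.9 to descend; (4) observe $\mathcal{T}$ is locally free, hence a tilting bundle.
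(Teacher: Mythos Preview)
Your proof is correct and follows essentially the same route as the paper: identify $\mathcal{T}\otimes_k L$ with $\bigoplus_{i=0}^n\mathcal{O}_{\mathbb{P}^n_L}(i)^{\oplus\,\mathrm{ind}(A^{\otimes i})}$, invoke Beilinson together with Proposition~2.4 to see this is tilting, and descend via the converse statement (which is Proposition~2.8 in the paper's numbering, not 2.9). Note also that Proposition~2.8 is stated for an \emph{arbitrary} field extension, so you need not restrict to a separable splitting field.
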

\begin{proof}
Let $\pi:X\otimes_k L\rightarrow X$ be the projection, where $L$ is an arbitrary splitting field of $X$. As mentioned above, one has $\pi^*\mathcal{W}_i\simeq \mathcal{O}_{\mathbb{P}_L^n}(i)^{\mathrm{ind}(A^{\otimes i})}$. Clearly, $\mathrm{rk}(\mathcal{W}_i)=\mathrm{ind}(A^{\otimes i})>0$ and therefore Proposition 2.4 yields that $\pi^*\mathcal{T}\simeq \bigoplus^n_{i=0}\mathcal{O}_{\mathbb{P}_L^n}(i)^{\oplus \mathrm{rk}(\mathcal{W}_i)}$ is a tilting bundle for $D^b(\mathbb{P}_L^n)$. Now Proposition 2.8 shows that $\mathcal{T}=\bigoplus^{n}_{i=0} \mathcal{W}_i$ is a tilting bundle for $D^b(X)$. Note that since $X$ is smooth, the endomorphism algebra $\mathrm{End}(\mathcal{T})$ has finite global dimension according to Theorem 2.3 (iii). This completes the proof.
\end{proof}
\begin{rema}
The locally free sheaves $\mathcal{W}_i$ are by construction indecomposable (see \cite{NO}). Furthermore, they are up to isomorphism the only indecomposable locally free sheaves becoming isomorphic to copies of $\mathcal{O}_{\mathbb{P}_L^n}(i)$ after base change to some splitting field $L$. In view of this fact we have chosen the tilting bundle $\mathcal{T}=\bigoplus^{n}_{i=0} \mathcal{W}_i$ somehow optimal, in the sense that all direct summands are indecomposable and the rank of $\mathcal{T}$ is minimal with respect to the property that $\mathcal{T}\otimes_k \bar{k}$ is a tilting bundle for $D^b(\mathbb{P}^n)$, given as the direct sum of invertible sheaves on $\mathbb{P}^n$. Such kind of "optimal" choices among tilting objects were also given by Hille and Perling \cite{HIP3}, where they constructed tilting bundles for rational surfaces such that the ranks of the direct summands are minimal.
\end{rema} 

We now want to study the generalized Brauer--Severi varieties $\mathrm{BS}(d,A)$ associated to a central simple $k$-algebra $A$ of degree $n$ (see \cite{BL} for details on generalized Brauer--Severi varieties). Recall that for a central simple $k$-algebra $A$ of degree $n$ and $1\leq d\leq n$ the \emph{generalized Brauer--Severi} variety $\mathrm{BS}(d,A)$ corresponding to $A$ is the projective scheme of left ideals of $A$ of dimension $d\cdot n$. If the central simple algebra $A$ is isomorphic to $M_n(k)$ one has $\mathrm{BS}(d,A)\simeq \mathrm{Grass}(d,n)$. For any field extension $k\subset L$, $\mathrm{BS}(d,A)\otimes_k L\simeq \mathrm{BS}(d,A\otimes_k L)$ and hence $\mathrm{BS}(d,A)\otimes_k L\simeq \mathrm{Grass}_L(d,n)$ for any splitting field $L$ of $A$ (see \cite{BL}). To apply the same arguments as in Theorem 3.1 in the case of generalized Brauer--Severi varieties, one needs a characteristic-free tilting bundle for $\mathrm{BS}(d,A)\otimes_k \bar{k}\simeq \mathrm{Grass}(d,n)$. Kapranov \cite{KAP} investigated Grassmannians in characteristic zero and constructed a tilting bundle by making use of the Borel--Weil--Bott Theorem. More precise, he proved that on a Grassmannian $\mathrm{Grass}(d,n)$ over a field $k$ of characteristic zero the locally free sheaf $\mathcal{T}=\bigoplus_{\lambda} \Sigma^{\lambda}(\mathcal{S})$ is a tilting bundle and the direct summands form a full strongly exceptional collection. Here $\mathcal{S}$ is the tautological sheaf on $\mathrm{Grass}(d,n)$, $\Sigma$ the Schur functor and the direct sum is taken over all partitions $\lambda$ with at most $d$ rows and at most $n-d$ columns (see \cite{ABW}, \cite{FU} for details on partitions and the Schur functor). Now the main problem in arbitrary characteristic is that first, there is no Borel--Weil--Bott theorem, and second, that Kaneda \cite{KAN} showed that the above bundle $\mathcal{T}$ remains a tilting bundle as long as $\mathrm{char}(k)\geq n-1$. The bundle $\mathcal{T}$ fails to be a tilting bundle in very small characteristic as shown by Buchweitz, Leuschke and Van den Bergh \cite{BLB}, 3.3. To be more precise, they showed that in $\mathrm{char}(k)=2$ the bundle from above cannot be a tilting bundle on $\mathrm{Grass}(2,4)$, since $\mathrm{Ext}^1(\mathrm{Sym}^2(\mathcal{S}),\bigwedge^2(\mathcal{S}))\neq 0$ and hence $\mathcal{T}$ has non-trivial self extension. But instead of taking the above bundle $\mathcal{T}$, the authors proved that $\mathcal{T}'=\bigoplus_{\lambda\in P'} \bigwedge^{\lambda '}(\mathcal{S})$ is a characteristic-free tilting bundle (see \cite{BLB}, Theorem 1.3). Here $\lambda '$ is the conjugate partition of $\lambda$ and $\bigwedge^{\alpha}(\mathcal{S})=\bigwedge^{\alpha_1}(\mathcal{S})\otimes...\otimes\bigwedge^{\alpha_s}(\mathcal{S})$ for an arbitrary partition $\alpha=(\alpha_1,...,\alpha_s)$. As in characteristic zero, the sum is taken over all partitions $\lambda\in P'$. Here $P'$ is the set of all partitions $P(d,n-d)$ with at most $d$ rows and at most $n-d$ columns equipped with a total ordering $\prec$ such that if $|\lambda|<|\mu|$ then $\lambda\prec\mu$.\\

To guarantee that the same argument as in the proof of Theorem 3.1 works we have to consider the direct summands $\bigwedge^{\lambda '}(\mathcal{S})$ and investigate if they descent. Let $\lambda=(\lambda_1,...,\lambda_d)$, where $0\leq\lambda_i\leq n-d$, be a partition with at most $d$ rows and at most $n-d$ columns, then the conjugate partition $\lambda '=(\lambda '_1,...,\lambda '_{n-d})$ has at most $n-d$ rows and at most $d$ columns. We consider $\bigwedge^{\lambda '}(\mathcal{S})=\bigwedge^{\lambda '_1}(\mathcal{S})\otimes...\otimes\bigwedge^{\lambda '_{n-d}}(\mathcal{S})$ and investigate if $\bigwedge^{\lambda '_i}(\mathcal{S})$ descents. Note that $0\leq \lambda '_i\leq d$. By the definition of the Schur functor, $\bigwedge^{\lambda '_i}(\mathcal{S})=\Sigma^{\alpha_i}(\mathcal{S})$ , for the partition $\alpha_i=(1,1,...,1)$ with the ones in the partition occurring $\lambda '_i$-times. Since $0\leq \lambda'_i\leq d$, the partition $\alpha_i$ is a partition belonging to the set of partitions with at most $d$ rows and at most $n-d$ columns. One can show that the sheaves $\Sigma^{\alpha_i}(\mathcal{S})$ do not descent but $\Sigma^{\alpha_i}(\mathcal{S})^{\oplus n\cdot \lambda '_i}$ do (see \cite{LSW}, Section 4, p.114). Let $\mathcal{N}_{\lambda '_i}$ be the locally free sheaf such that $\mathcal{N}_{\lambda'_i}\otimes_k L\simeq \Sigma^{\alpha_i}(\mathcal{S})^{\oplus n\cdot \lambda '_i}$ for a Galois splitting field $k\subset L$ (note that $\mathrm{BS}(d,A)\otimes_k L\simeq \mathrm{Grass}(d,n)$ and $\mathcal{S}$ is the tautological sheaf on $\mathrm{Grass}(d,n)$). By \cite{NO}, Proposition 3.3, the locally free sheaf $\mathcal{N}_{\lambda'_i}$ is unique up to isomorphism. We set $\mathcal{N}_{\lambda '}=\mathcal{N}_{\lambda '_1}\otimes...\otimes \mathcal{N}_{\lambda '_{n-d}}$ and consider the locally free sheaf $\mathcal{T}=\bigoplus_{\lambda\in P'} \mathcal{N}_{\lambda '}$, where the sum is taken over all partitions $\lambda\in P'$.\\

With the above notation we get the following result:
\begin{thm}
Let $X=\mathrm{BS}(d,A)$ be a generalized Brauer--Severi variety over a field $k$ associated to the degree $n$ central simple $k$-algebra $A$. Then the locally free sheaf $\mathcal{T}=\bigoplus_{\lambda \in P'} \mathcal{N}_{\lambda '}$ from above is a tilting bundle for $D^b(X)$.
\end{thm}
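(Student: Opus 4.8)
The plan is to follow the strategy of the proof of Theorem 3.1: base change to a splitting field, recognize the pullback of $\mathcal{T}$ as the characteristic-free Grassmannian tilting bundle of \cite{BLB} up to multiplicities, and then descend. First I would choose a finite Galois splitting field $k\subset L$ of the central simple algebra $A$ and write $\pi\colon X\otimes_k L\rightarrow X$ for the base change morphism, so that $X\otimes_k L\simeq\mathrm{Grass}_L(d,n)$ and $\mathcal{S}$ denotes the tautological sheaf on this Grassmannian. By the construction of the sheaves $\mathcal{N}_{\lambda'}$ recalled above one has $\pi^*\mathcal{N}_{\lambda'}\simeq\bigotimes_i\Sigma^{\alpha_i}(\mathcal{S})^{\oplus n\lambda'_i}$, where $\alpha_i=(1^{\lambda'_i})$ and the factor corresponding to an index $i$ with $\lambda'_i=0$ is understood to be $\mathcal{O}_{\mathrm{Grass}_L(d,n)}$.

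Next I would carry out the bookkeeping identifying $\pi^*\mathcal{T}$ with the bundle of \cite{BLB}. Since $\Sigma^{\alpha_i}(\mathcal{S})=\bigwedge^{\lambda'_i}(\mathcal{S})$ and tensor products distribute over finite direct sums, one gets $\pi^*\mathcal{N}_{\lambda'}\simeq\bigl(\bigotimes_i\bigwedge^{\lambda'_i}(\mathcal{S})\bigr)^{\oplus m_{\lambda'}}=\bigwedge^{\lambda'}(\mathcal{S})^{\oplus m_{\lambda'}}$ with $m_{\lambda'}=\prod_{i\colon\lambda'_i>0}n\lambda'_i$, which is a strictly positive integer. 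Hence $\pi^*\mathcal{T}\simeq\bigoplus_{\lambda\in P'}\bigwedge^{\lambda'}(\mathcal{S})^{\oplus m_{\lambda'}}$. By \cite{BLB}, Theorem 1.3, the bundle $\mathcal{T}'=\bigoplus_{\lambda\in P'}\bigwedge^{\lambda'}(\mathcal{S})$ is a tilting bundle for $D^b(\mathrm{Grass}_L(d,n))$ over the arbitrary field $L$; since all the multiplicities $m_{\lambda'}$ are positive, Proposition 2.4 then shows that $\pi^*\mathcal{T}=\mathcal{T}\otimes_k L$ is again a tilting bundle for $D^b(\mathrm{Grass}_L(d,n))$.

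To finish, I would invoke descent. The scheme $X=\mathrm{BS}(d,A)$ is a twisted form of a Grassmannian, hence smooth, projective and geometrically integral over $k$, so Proposition 2.8 applies to the object $\mathcal{T}\in D^b(X)$ and the extension $k\subset L$: from the fact that $\mathcal{T}\otimes_k L$ is a tilting object on $X\otimes_k L$ we conclude that $\mathcal{T}$ is a tilting object for $D(\mathrm{Qcoh}(X))$. As $\mathcal{T}$ is locally free this means it is a tilting bundle, and by Theorem 2.3 (iii) its endomorphism algebra $\mathrm{End}(\mathcal{T})$ has finite global dimension because $X$ is smooth.

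The conceptual work — showing that the individual summands $\Sigma^{\alpha_i}(\mathcal{S})$ do not descend to $X$ while the specific multiples $\Sigma^{\alpha_i}(\mathcal{S})^{\oplus n\lambda'_i}$ do, so that the $\mathcal{N}_{\lambda'}$ are actually defined over $k$ — has already been done above, relying on \cite{LSW}, Section 4, and \cite{NO}, Proposition 3.3; granting this, the argument is essentially formal. The only point that needs a little attention is the behaviour of the zero parts $\lambda'_i=0$ (one must keep the factor $\mathcal{O}$ rather than a zero sheaf, which is exactly what forces $\mathcal{N}_{\lambda'}$ to have positive rank) and the verification that the multiplicities $m_{\lambda'}$ are strictly positive, since that is precisely the hypothesis of Proposition 2.4 that makes the descent argument go through.
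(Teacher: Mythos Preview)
Your proposal is correct and follows essentially the same approach as the paper: base change to a splitting field (the paper uses $\bar{k}$, you use a finite Galois $L$, but Proposition 2.8 handles arbitrary extensions), identify $\pi^*\mathcal{T}$ as the Buchweitz--Leuschke--Van den Bergh tilting bundle with positive multiplicities via Proposition 2.4, and descend via Proposition 2.8. Your explicit bookkeeping of the multiplicities $m_{\lambda'}=\prod_{i:\lambda'_i>0}n\lambda'_i$ and your care with the $\lambda'_i=0$ factors (interpreting $\bigwedge^0(\mathcal{S})=\mathcal{O}$ rather than taking a zero direct summand) are useful clarifications that the paper leaves implicit.
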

\begin{proof}
Let $\pi:\mathrm{BS}(d,A)\otimes_k \bar{k}\rightarrow \mathrm{BS}(d,A)$ be the projection and $\lambda=(\lambda_1,...,\lambda_d)$ a partition with at most $d$ rows and $n-d$ columns. By the above discussion we have for $\lambda'=(\lambda'_1,...,\lambda'_{n-d})$, $\pi^*\mathcal{N}_{\lambda '_i}\simeq \Sigma^{\alpha_i}(\mathcal{S})^{\oplus n\cdot \lambda '_i}\simeq \bigwedge^{\lambda '_i}(\mathcal{S})^{\oplus n\cdot \lambda '_i}$, where $\mathcal{S}$ is the tautological sheaf on $\mathrm{BS}(d,A)\otimes_k \bar{k}\simeq \mathrm{Grass}(d,n)$. Hence
\begin{eqnarray*}
 \pi^*\mathcal{N}_{\lambda'}\simeq \pi^*\mathcal{N}_{\lambda '_1}\otimes...\otimes\pi^*\mathcal{N}_{\lambda '_{n-d}}\simeq \bigwedge^{\lambda '_1}(\mathcal{S})^{\oplus n\cdot \lambda '_1}\otimes...\otimes\bigwedge^{\lambda '_{n-d}}(\mathcal{S})^{\oplus n\cdot \lambda '_{n-d}}.
\end{eqnarray*} By Proposition 2.4 and \cite{BLB}, Theorem 1.3, the object 
\begin{eqnarray*}
\pi^*\mathcal{T}\simeq \bigoplus_{\lambda\in P'} \pi^*\mathcal{N}_{\lambda'}\simeq \bigoplus_{\lambda\in P'} (\bigwedge^{\lambda '_1}(\mathcal{S})^{\oplus n\cdot \lambda '_1}\otimes...\otimes\bigwedge^{\lambda '_{n-d}}(\mathcal{S})^{\oplus n\cdot \lambda '_{n-d}}) 
\end{eqnarray*}
is a characteristic-free tilting bundle for $D^b(\mathrm{BS}(d,A)\otimes_k\bar{k})$. Finally Proposition 2.8 shows that $\mathcal{T}$ is a tilting bundle for $D^b(\mathrm{BS}(d,A))$. Since $\mathrm{BS}(d,A)$ is smooth, the endomorphism algebra $\mathrm{End}(\mathcal{T})$ has finite global dimension according to Theorem 2.3. This completes the proof.
\end{proof}

\begin{cor}
Let $X=X_1\times...\times X_r$ be a finite product of generalized Brauer--Severi varieties, then $D^b(X)$ admits a tilting object. 
\end{cor}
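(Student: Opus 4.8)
The plan is to reduce the statement to the single-factor case already established, together with the behaviour of tilting objects under external products. By Theorem 3.3, each factor $X_i=\mathrm{BS}(d_i,A_i)$ carries a tilting bundle $\mathcal{T}_i\in D^b(X_i)$. The product $X=X_1\times\cdots\times X_r$ is smooth and projective over $k$, being a finite product of smooth projective $k$-schemes, so the only thing to do is to assemble a tilting object on $X$ out of the $\mathcal{T}_i$ and to check that the intermediate products satisfy the hypotheses of Proposition 2.6.

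Since Proposition 2.6 treats only two factors, I would argue by induction on $r$, and the point requiring a little care is that one never leaves the class of smooth projective \emph{integral} $k$-schemes. Smoothness and projectivity are clearly preserved under finite products over $k$. For integrality, note that each $X_i$ is geometrically integral: after base change to an algebraic closure $\bar{k}$ one has $X_i\otimes_k\bar{k}\simeq\mathrm{Grass}(d_i,n_i)$, which is integral. As a finite product of integral schemes of finite type over the algebraically closed field $\bar{k}$ is again integral, the scheme $(X_1\times\cdots\times X_j)\otimes_k\bar{k}\simeq\mathrm{Grass}(d_1,n_1)\times\cdots\times\mathrm{Grass}(d_j,n_j)$ is integral for every $j$, so $X_1\times\cdots\times X_j$ is geometrically integral, hence integral. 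Writing $Y_j=X_1\times\cdots\times X_j$, one then checks inductively that $\mathcal{T}_1\boxtimes\cdots\boxtimes\mathcal{T}_j$ is a tilting object on $Y_j$: granting this for $j$, Proposition 2.6 applied to $Y_{j+1}=Y_j\times X_{j+1}$ with the tilting objects $\mathcal{T}_1\boxtimes\cdots\boxtimes\mathcal{T}_j$ on $Y_j$ and $\mathcal{T}_{j+1}$ on $X_{j+1}$ shows that $\mathcal{T}_1\boxtimes\cdots\boxtimes\mathcal{T}_{j+1}$ is a tilting object on $Y_{j+1}$. After $r-1$ steps one obtains that $\mathcal{T}_1\boxtimes\cdots\boxtimes\mathcal{T}_r$ is a tilting object for $D(\mathrm{Qcoh}(X))$; since $X$ is smooth, projective and integral this is equivalent to being a tilting object for $D^b(X)$, and as an external tensor product of locally free sheaves it is even a tilting bundle.

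I do not expect a genuine obstacle here: the argument is essentially bookkeeping, the only delicate point being the stability of geometric integrality under products, which is disposed of by passing to $\bar{k}$. Alternatively, one could bypass the iteration of Proposition 2.6 and argue by descent: for a common splitting field $k\subset L$ one has $X\otimes_k L\simeq\prod_i\mathrm{Grass}_L(d_i,n_i)$, this product carries the external tensor product of the Buchweitz--Leuschke--Van den Bergh tilting bundles, and one descends it along $L/k$ via Proposition 2.8 exactly as in the proof of Theorem 3.3 — but the inductive argument above is the shorter route.
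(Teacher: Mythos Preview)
Your argument is correct and matches the paper's intended proof: the corollary is stated without proof, being an immediate consequence of Theorem~3.3 together with Proposition~2.6 applied iteratively. Your extra care in verifying that each partial product $Y_j$ remains smooth, projective and integral (via geometric integrality of Grassmannians) is a detail the paper leaves to the reader, but it is exactly what is needed to keep invoking Proposition~2.6.
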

In view of the fact that Brauer--Severi varieties in general do not have a full (strongly) exceptional collection (see \cite{NO1}, Proposition 4.6), for twisted forms of homogeneous varieties it is sensible to presume the existence of tilting objects rather then of a full (strongly) exceptional collection. The evidence coming from results of Panin \cite{PA} and Blunk \cite{BLU} naturally leads to the following conjecture.
\begin{con}
Let $X$ be a twisted form of a homogeneous variety, then $D^b(X)$ admits a tilting object.
\end{con}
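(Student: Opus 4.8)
The plan is to follow the descent philosophy that already drives Theorems 3.1 and 3.3 and is packaged in Proposition 2.8. Let $X$ be a twisted form of a homogeneous variety, so that over a finite Galois splitting field $k\subset L$ with group $\Gamma=\mathrm{Gal}(L/k)$ one has $X_L:=X\otimes_k L\simeq (G/P)_L$ for a semisimple group $G$ and a parabolic $P$. The first ingredient I would invoke is the absolute case: that $(G/P)_{\bar{k}}$ carries a tilting object $\mathcal{T}_0=\bigoplus_i \mathcal{E}_i$ with indecomposable, pairwise non-isomorphic summands $\mathcal{E}_i$ which are $G$-equivariant (homogeneous) bundles. For classical types this is Kapranov's theorem \cite{KAP2} together with Samokhin's extension \cite{SAM}, and in general it is the conjecture of Catanese recalled in the introduction (\cite{BOE}). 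Granting this, the whole problem reduces to producing a locally free sheaf $\mathcal{T}$ on $X$ whose base change to $L$ is a direct sum of copies of the $\mathcal{E}_i$: by Proposition 2.4 such a base change is again a tilting bundle on $X_L$, and then Proposition 2.8 forces $\mathcal{T}$ itself to be a tilting object on $X$.

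The heart of the matter is the descent of the summands, and here I would imitate exactly the mechanism of Theorems 3.1 and 3.3. Through the twisting cocycle defining $X$, the group $\Gamma$ acts on the set of isomorphism classes $\{[\mathcal{E}_i]\}$; since the $\mathcal{E}_i$ are homogeneous and $\mathcal{T}_0$ is intrinsic to $G/P$, this action permutes them. No individual $\mathcal{E}_i$ need be defined over $k$, because the obstruction to descending a homogeneous bundle is a Brauer class — precisely as the tautological and Schur-functor bundles on $\mathrm{Grass}(d,n)$ descend only after passing to the multiplicities $\mathrm{ind}(A^{\otimes i})$ or $n\cdot\lambda'_i$. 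For each $\Gamma$-orbit I would choose a common multiplicity $m$ large enough to annihilate the associated Brauer obstruction and form the $\Gamma$-invariant bundle $\bigoplus_{[\mathcal{E}_j]}\mathcal{E}_j^{\oplus m}$ over the orbit, which then descends to an indecomposable absolutely split bundle on $X$ in the sense of \cite{NO}. Summing over all orbits yields $\mathcal{T}$, and by construction $\mathcal{T}\otimes_k L$ is a direct sum of copies of the $\mathcal{E}_i$, closing the argument.

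The main obstacle is twofold, and it is precisely what keeps the statement a conjecture. First, the absolute input is not available in general: outside classical type the existence of a tilting object on $(G/P)_{\bar{k}}$ is open, so the reduction is only as strong as the known cases of the Catanese conjecture. Second, and more seriously for the twisted problem, the defining cocycle may act through \emph{outer} automorphisms of $G$ — already so for types $B_n$ and $D_n$, where the twist is by an involution and $X$ is an involution variety rather than a Brauer--Severi-type object. In that regime the clean ``multiply by the index'' descent of an equivariant bundle must be replaced by descent of sheaves of Azumaya algebras carrying an involution of the first kind, as the introduction indicates; verifying that suitable multiplicities of the $\mathcal{E}_i$ remain tilting after this finer descent, and that the Brauer/involution obstruction is annihilated uniformly across all orbits, is the step I expect to resist a uniform argument. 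By contrast, the inner case — where the twist is governed by a single Brauer class, as in types $A_n$ and $C_n$ treated in Section 5 — should go through exactly along the lines sketched, and that is the portion of the conjecture the paper actually establishes.
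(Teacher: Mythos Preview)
The statement is Conjecture 3.5, and the paper offers no proof of it: it is explicitly left open, with only partial evidence supplied by the results of Sections 3 and 5. There is therefore no ``paper's proof'' to compare against. What you have written is not a proof either, and you are aware of this; it is a strategic outline together with a diagnosis of the obstructions, and on that level it matches the paper's own perspective quite well. The descent mechanism you describe (amplify the summands of a tilting bundle on the split form until the Brauer obstruction vanishes, then apply Proposition 2.8) is exactly what drives Theorems 3.1, 3.3 and Corollaries 5.5--5.7, and your identification of the two gaps --- the absolute case of $G/P$ being open beyond classical type, and the descent for orthogonal types requiring sheaves of Azumaya algebras with involution rather than bare Brauer classes --- is in line with the paper's concluding remarks.

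One imprecision worth flagging: the complication in types $B_n$ and $D_n$ is not that the defining cocycle acts through outer automorphisms of $G$; indeed $B_n$ has no outer automorphisms whatsoever. The point is rather that the automorphism group of a smooth quadric is the projective orthogonal group, so twisted forms are classified by $H^1(k,\mathrm{PGO})$ and hence by central simple algebras \emph{with involution of the first kind}. This extra involution datum is what forces the finer descent you allude to, but it is present already for inner forms and is not an outer-automorphism phenomenon. Your underlying contrast --- a single Brauer class for $A_n$ and $C_n$ versus an involution-equipped class for $B_n$ and $D_n$ --- is the correct one; only the label is off. Beyond this, there is nothing to correct, because there is nothing to prove: the statement is conjectural, and your sketch accurately explains both the available strategy and where it currently stalls.
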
 
Note that in general one cannot expect to have a full strongly exceptional collection as in the case of homogeneous varieties over $\mathbb{C}$. Up to now, only partial results in direction of conjecture 3.5 are known. For instance Blunk \cite{BLU} constructed tilting bundles on twisted forms of quadrics and Theorem 3.3 from above shows that we have tilting bundles on twisted forms of Grassmannians. In Section 5 we also prove that the conjecture holds for certain twisted forms of homogeneous varieties of type $A_n$ and $C_n$.

\section{Tilting objects on some relative flag varieties}
In this section we prove the existence of tilting objects on some relative flags. More precisely, we prove that Grassmannian bundles and certain quadric bundles admit tilting objects. We start with a result that slightly generalizes a theorem proved by Costa and Mir\'o-Roig \cite{CMR}. In loc.cit., Theorem 4.1 below is proved in the case the base field $k$ is algebraically closed and of characteristic zero and $X$ is supposed to have a full strongly exceptional collection of invertible sheaves. We cite the next theorem as it is proved in \cite{NO1}, Theorem 5.3.
\begin{thm}
Let $k$ be a field, $X$ a smooth projective and integral $k$-scheme and $\mathcal{E}$ a locally free sheaf of finite rank. Suppose that $D^b(X)$ admits a tilting bundle, then $D^b(\mathbb{P}(\mathcal{E}))$ admits a tilting bundle too.
\end{thm}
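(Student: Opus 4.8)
The plan is to reduce everything to the classical relative Beilinson resolution and the relative Euler sequence on $\mathbb{P}(\mathcal{E})$. Write $\pi:\mathbb{P}(\mathcal{E})\to X$ for the structure morphism, $r$ the rank of $\mathcal{E}$, so the fibres are $\mathbb{P}^{r-1}$, and let $\mathcal{O}_{\pi}(1)$ be the relative tautological quotient bundle. The candidate tilting object is
\begin{equation*}
\mathcal{R}\;=\;\bigoplus_{i=0}^{r-1}\pi^{*}\mathcal{T}\otimes\mathcal{O}_{\pi}(i),
\end{equation*}
where $\mathcal{T}$ is the given tilting bundle on $X$. First I would verify the vanishing condition (i) of Definition 2.1: using $\mathrm{R}\mathrm{Hom}_{\mathbb{P}(\mathcal{E})}(\pi^{*}\mathcal{F}\otimes\mathcal{O}_{\pi}(i),\pi^{*}\mathcal{G}\otimes\mathcal{O}_{\pi}(j))\cong \mathrm{R}\mathrm{Hom}_{X}(\mathcal{F},\mathcal{G}\otimes \mathrm{R}\pi_{*}\mathcal{O}_{\pi}(j-i))$ together with the standard computation $\mathrm{R}\pi_{*}\mathcal{O}_{\pi}(m)$ (which is a locally free sheaf concentrated in degree $0$ for $0\le m\le r-1$, namely $\mathrm{Sym}^{m}\mathcal{E}$ up to twist, and vanishes for $-(r-1)\le m\le -1$), one sees that $\mathrm{Ext}^{>0}$ between the summands is controlled by $\mathrm{Ext}^{>0}_{X}(\mathcal{T},\mathcal{T}\otimes(\text{locally free}))$; since $\mathcal{T}$ is tilting on $X$ and $X$ is smooth, a Koszul/filtration argument on the locally free sheaf $\mathrm{Sym}^{m}\mathcal{E}$ reduces this to $\mathrm{Ext}^{>0}_{X}(\mathcal{T},\mathcal{T}\otimes\mathcal{O}_X)=0$ — more precisely one writes $\mathrm{Sym}^m\mathcal E$ as an iterated extension and uses that $\langle\mathcal T\rangle$ is closed under the relevant operations, or invokes Proposition 2.4/2.6-type arguments — giving condition (i). That $\mathcal{R}$ is a bundle (hence coherent, and $\mathrm{Hom}(\mathcal{R},-)$ commutes with direct sums) is clear, so condition (iii) is automatic.

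Next I would establish generation, condition (ii). The key input is the relative Beilinson resolution of the diagonal on $\mathbb{P}(\mathcal{E})\times_{X}\mathbb{P}(\mathcal{E})$: there is a Koszul-type resolution of $\mathcal{O}_{\Delta}$ with terms of the form $\mathcal{O}_{\pi}(-i)\boxtimes_{X}(\Omega^{i}_{\pi}(i))$ for $i=0,\dots,r-1$ (the fibrewise Beilinson spectral sequence, relativized). Applying the associated Fourier--Mukai type argument, every object of $D^{b}(\mathbb{P}(\mathcal{E}))$ is built (by finitely many cones) out of objects of the form $\pi^{*}\mathcal{G}\otimes\mathcal{O}_{\pi}(i)$ with $0\le i\le r-1$ and $\mathcal{G}\in D^{b}(X)$. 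Since $\mathcal{T}$ generates $D^{b}(X)$, each such $\mathcal{G}$ lies in the thick subcategory generated by $\mathcal{T}$, hence $\pi^{*}\mathcal{G}\otimes\mathcal{O}_{\pi}(i)$ lies in the thick subcategory generated by $\pi^{*}\mathcal{T}\otimes\mathcal{O}_{\pi}(i)$ (pullback is exact and the twist by a line bundle is an autoequivalence). Therefore $\langle\mathcal{R}\rangle=D^{b}(\mathbb{P}(\mathcal{E}))$. Passing to $D(\mathrm{Qcoh})$ via the Neeman/Bondal--Van den Bergh compact-generation remark recalled in Remark 2.2 (valid since $\mathbb{P}(\mathcal{E})$ is smooth projective when $X$ is, or more generally projective over a finitely generated algebra), condition (ii) follows.

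I expect the main obstacle to be making the relative Beilinson/diagonal argument fully rigorous in this generality — in particular, checking that $\mathbb{R}(\pi\times\pi)_{*}$ of the resolution terms behaves as in the absolute case and that the spectral-sequence bookkeeping really places every object in $\langle\mathcal{R}\rangle$ rather than in some a priori larger category. An alternative, and perhaps cleaner, route that sidesteps the diagonal is to prove generation directly by induction using the semiorthogonal decomposition $D^{b}(\mathbb{P}(\mathcal{E}))=\langle \pi^{*}D^{b}(X),\ \pi^{*}D^{b}(X)\otimes\mathcal{O}_{\pi}(1),\dots,\pi^{*}D^{b}(X)\otimes\mathcal{O}_{\pi}(r-1)\rangle$ of Orlov's projective bundle theorem: each component $\pi^{*}D^{b}(X)\otimes\mathcal{O}_{\pi}(i)$ is generated by $\pi^{*}\mathcal{T}\otimes\mathcal{O}_{\pi}(i)$ because $\pi^{*}$ takes a generator of $D^{b}(X)$ to a generator of its essential image, and a semiorthogonal decomposition whose components are each generated by one object is generated by the direct sum of those objects. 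Combined with the $\mathrm{Ext}$-vanishing already checked, this gives that $\mathcal{R}$ is a tilting bundle. Finally, smoothness of $\mathbb{P}(\mathcal{E})$ over $k$ together with Theorem 2.3(iii) gives that $\mathrm{End}(\mathcal{R})$ has finite global dimension, which one may record for later use.
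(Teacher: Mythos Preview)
Your generation argument is fine --- either the relative Beilinson resolution or, more directly, Orlov's semiorthogonal decomposition does the job, and the paper uses exactly the latter mechanism (see Lemma~4.4 and Lemma~4.14). The gap is in the $\mathrm{Ext}$-vanishing.

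You need $\mathrm{Ext}^{>0}_X(\mathcal{T},\mathcal{T}\otimes\mathrm{Sym}^m\mathcal{E})=0$ for $1\le m\le r-1$, and you claim this follows by ``writing $\mathrm{Sym}^m\mathcal{E}$ as an iterated extension'' of copies of $\mathcal{O}_X$. But a locally free sheaf on a projective scheme is essentially never a global iterated extension of trivial bundles; the filtration you have in mind only exists Zariski-locally. Concretely, take $X=\mathbb{P}^1$, $\mathcal{T}=\mathcal{O}\oplus\mathcal{O}(1)$, and $\mathcal{E}=\mathcal{O}\oplus\mathcal{O}(-n)$ with $n\ge 1$. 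Then $\pi_*\mathcal{O}_\pi(1)\simeq\mathcal{E}$ and
\[
\mathrm{Ext}^1\bigl(\pi^*\mathcal{O}(1),\,\pi^*\mathcal{O}\otimes\mathcal{O}_\pi(1)\bigr)\;\simeq\;\mathrm{Ext}^1_{\mathbb{P}^1}\bigl(\mathcal{O}(1),\mathcal{E}\bigr)\;\supset\;H^1(\mathbb{P}^1,\mathcal{O}(-n-1))\neq 0,
\]
so your candidate $\mathcal{R}=\pi^*\mathcal{T}\oplus\pi^*\mathcal{T}\otimes\mathcal{O}_\pi(1)$ on the Hirzebruch surface $\mathbb{F}_n$ has nontrivial self-extensions. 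Neither Proposition~2.4 nor~2.6 rescues this: they concern multiplicities of summands and products, not twisting by an arbitrary locally free sheaf.

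The missing idea, visible in the proofs of Theorem~4.6 and Theorem~4.15 of the paper (Theorem~4.1 itself is only cited from \cite{NO1}), is a \emph{positivity twist}. One replaces $\mathcal{E}$ by $\mathcal{E}\otimes\mathcal{L}^{\otimes(-n)}$ for an ample $\mathcal{L}$ and $n\gg 0$; since $\mathbb{P}(\mathcal{E}\otimes\mathcal{L}^{\otimes(-n)})\simeq\mathbb{P}(\mathcal{E})$ (Remark~4.3), this is harmless, but now the relevant pushforwards become $\mathrm{Sym}^m(\mathcal{E})\otimes\mathcal{L}^{\otimes nm}$ and Serre vanishing kills the higher $\mathrm{Ext}$'s. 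Equivalently (Theorem~4.15), one keeps $\mathcal{E}$ fixed but takes $\mathcal{R}=\bigoplus_i\pi^*(\mathcal{T}\otimes\mathcal{M}^{\otimes i})\otimes\mathcal{O}_\pi(i)$ for $\mathcal{M}$ sufficiently ample. Once you insert this twist, the rest of your outline goes through.
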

\begin{exam}
Let $X$ be a generalized Brauer--Severi variety over $k$. According to Theorem 3.3, $X$ admits a tilting bundle. Theorem 4.1 applies and yields a tilting bundle for $D^b(\mathbb{P}(\mathcal{E}))$ for any finite rank locally free sheaf $\mathcal{E}$ on $X$. Notice that this example cannot be obtained from the result of  Costa and Mir\'o-Roig \cite{CMR} since first, $k$ is arbitrary and second, the tilting bundle on $X$ is not the direct sum of invertible sheaves.  
\end{exam}
To generalize Kapranov's result stating a tilting bundle for Grassmannians over $\mathbb{C}$ (see \cite{KAP}) , we consider the relative version of the Grassmannian, the Grassmannian bundle. For this, we take a smooth projective and integral $k$-scheme $X$ and a locally free sheaf $\mathcal{E}$ of rank $r+1$. Denote by $\mathrm{Grass}_X(l,\mathcal{E})$ the relative Grassmannian and by $\pi:\mathrm{Grass}_X(l,\mathcal{E})\rightarrow X$ the projective structure morphism. Furthermore, one has the tautological subbundle $\mathcal{R}$ of rank $l$ in $\pi^*\mathcal{E}$ and the tautological short exact sequence
\begin{eqnarray*}
0\longrightarrow \mathcal{R}\longrightarrow \pi^*\mathcal{E}\longrightarrow \mathcal{Q}\longrightarrow 0.
\end{eqnarray*}
\begin{rema}
As in the case of projective bundles, for an arbitrary invertible sheaf $\mathcal{L}$ on $X$ one has $\mathrm{Grass}_X(l,\mathcal{E})\simeq \mathrm{Grass}_X(l,\mathcal{E}\otimes \mathcal{L})$. 
\end{rema}
To see that the Grassmannian bundle admits a tilting sheaf, provided the base scheme $X$ admits one, we first state two lemmas. We fix some notations: Denote by $P(l, r+1-l)$ the set of partitions $\lambda=(\lambda_1,...,\lambda_l)$ with $0\leq \lambda_l\leq...\leq \lambda_1\leq r+1-l$. For $\lambda\in P(l, r+1-l)$ we have the Schur functor $\Sigma^{\lambda}$ and locally free sheaves $\Sigma^{\lambda}(\mathcal{R})$. Choose a total order $\prec$ on the set $P(l, r+1-l)$ such that for two partitions $\lambda$ and $\mu$, $\lambda\prec\mu$ means that the Young diagram of $\lambda$ is not contained in that of $\mu$, i.e., $\exists i: \mu_i<\lambda_i$. Let $P'$ be the above set of partitions equipped with this order. Suppose that $X$ is a smooth projective and integral $k$-scheme, where $k$ is an algebraically closed field of characteristic zero. Orlov \cite{DO} proved that $D^b(\mathrm{Grass}_X(l,\mathcal{E}))$ has a semiorthogonal decomposition 
\begin{eqnarray}
D^b(\mathrm{Grass}_X(l,\mathcal{E}))=\langle ..., D^b(X)\otimes \Sigma^\lambda(\mathcal{R}),...,D^b(X)\otimes \Sigma^\mu(\mathcal{R}),...\rangle,
\end{eqnarray} with $\lambda\prec \mu$ as defined above. Here $D^b(X)\otimes \Sigma^{\lambda}(\mathcal{R})$ is the full triangulated subcategory of $D^b(\mathrm{Grass}_X(l,\mathcal{E}))$ consisting of elements of the form $\pi^*\mathcal{M}\otimes \Sigma^{\lambda}(\mathcal{R})$, where $\mathcal{M}\in D^b(X)$. Furthermore, for all partitions $\lambda$ one has an equivalence between $D^b(X)$ and $D^b(X)\otimes \Sigma^{\lambda}(\mathcal{R})$, given by the functor $\pi^*(-)\otimes \Sigma^{\lambda}(\mathcal{R})$ (see \cite{DO}, \text{\S}3).
\begin{lem}
Let $k$ be an algebraically closed field of characteristic zero, $X$ a smooth projective and integral $k$-scheme, $P'$ the above ordered set of partitions and $\mathcal{E}$ a locally free sheaf of rank $r+1$ on $X$. Suppose the object $\mathcal{A}\in D^b(X)$ generates the category $D^b(X)$. Then the object $\mathcal{N}=\bigoplus_{\lambda\in P'}\pi^*\mathcal{A}\otimes \Sigma^{\lambda}(\mathcal{R})$ generates $D^b(\mathrm{Grass}_X(l,\mathcal{E}))$.
\end{lem}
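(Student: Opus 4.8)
The plan is to exploit the semiorthogonal decomposition (1) of Orlov together with the fact that each component $D^b(X)\otimes\Sigma^\lambda(\mathcal R)$ is equivalent to $D^b(X)$ via $\pi^*(-)\otimes\Sigma^\lambda(\mathcal R)$. Write $\langle\mathcal N\rangle$ for the smallest thick subcategory of $D^b(\mathrm{Grass}_X(l,\mathcal E))$ closed under direct sums and containing $\mathcal N$; since $\mathcal N$ is a finite direct sum, $\langle\mathcal N\rangle$ contains each summand $\pi^*\mathcal A\otimes\Sigma^\lambda(\mathcal R)$, so it suffices to show that for every $\lambda\in P'$ the whole subcategory $D^b(X)\otimes\Sigma^\lambda(\mathcal R)$ is contained in $\langle\mathcal N\rangle$. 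By the semiorthogonal decomposition these subcategories generate $D^b(\mathrm{Grass}_X(l,\mathcal E))$, so this would finish the proof.

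Fix $\lambda\in P'$. Under the equivalence $\pi^*(-)\otimes\Sigma^\lambda(\mathcal R):D^b(X)\xrightarrow{\ \sim\ }D^b(X)\otimes\Sigma^\lambda(\mathcal R)$, the object $\pi^*\mathcal A\otimes\Sigma^\lambda(\mathcal R)$ corresponds to $\mathcal A$. First I would argue that this equivalence is compatible with direct sums and sends thick subcategories closed under direct sums to thick subcategories closed under direct sums; hence the smallest such subcategory of $D^b(X)\otimes\Sigma^\lambda(\mathcal R)$ containing $\pi^*\mathcal A\otimes\Sigma^\lambda(\mathcal R)$ is the image of the smallest such subcategory of $D^b(X)$ containing $\mathcal A$, which by hypothesis is all of $D^b(X)$. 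Therefore $\langle\pi^*\mathcal A\otimes\Sigma^\lambda(\mathcal R)\rangle$, computed inside $D^b(X)\otimes\Sigma^\lambda(\mathcal R)$, already equals $D^b(X)\otimes\Sigma^\lambda(\mathcal R)$. Since $\langle\mathcal N\rangle$ contains $\pi^*\mathcal A\otimes\Sigma^\lambda(\mathcal R)$ and is thick and closed under direct sums, it contains this whole subcategory.

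Finally I would assemble the pieces: $\langle\mathcal N\rangle\supseteq\bigcup_{\lambda\in P'}D^b(X)\otimes\Sigma^\lambda(\mathcal R)$, and the smallest thick subcategory containing the right-hand side is $D^b(\mathrm{Grass}_X(l,\mathcal E))$ by Orlov's semiorthogonal decomposition; hence $\langle\mathcal N\rangle=D^b(\mathrm{Grass}_X(l,\mathcal E))$, i.e.\ $\mathcal N$ generates. The main subtlety — and the step I would be most careful about — is the passage from "generates $D^b(X)$" to "generates the component $D^b(X)\otimes\Sigma^\lambda(\mathcal R)$": one needs that the generating subcategory condition is intrinsic and transported by the exact equivalence $\pi^*(-)\otimes\Sigma^\lambda(\mathcal R)$, and that the ambient notion of "thick, closed under direct sums" taken inside $D^b(\mathrm{Grass}_X(l,\mathcal E))$ restricts correctly to each component (here the semiorthogonality is what guarantees that an object of the component built as a colimit/cone of objects of $\mathcal N$'s summands lies again in that component). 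Everything else is formal manipulation of triangulated subcategories.
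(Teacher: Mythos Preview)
Your proposal is correct and follows essentially the same approach as the paper: use Orlov's semiorthogonal decomposition (1), transport the generation of $D^b(X)$ by $\mathcal{A}$ through the equivalence $\pi^*(-)\otimes\Sigma^{\lambda}(\mathcal{R})$ to each component, and conclude that the direct sum generates the whole category. The paper's proof is simply a terser version of what you wrote, omitting the careful discussion of why generation is preserved under exact equivalences and how thick subcategories interact with the components of a semiorthogonal decomposition.
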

\begin{proof}
In view of the equivalence $\pi^*(-)\otimes \Sigma^{\lambda}(\mathcal{R}):D^b(X)\stackrel{\sim}\rightarrow D^b(X)\otimes \Sigma^{\lambda}(\mathcal{R})$ and with the assumption that $D^b(X)$ is generated by the object $\mathcal{A}$, we conclude that $D^b(X)\otimes \Sigma^{\lambda}(\mathcal{R})$ is generated by the object $\pi^*\mathcal{A}\otimes \Sigma^{\lambda}(\mathcal{R})$. From the semiorthogonal decomposition (1) of $D^b(\mathrm{Grass}_X(l,\mathcal{E}))$ we finally get that the object $\mathcal{N}=\bigoplus_{\lambda\in P'}\pi^*\mathcal{A}\otimes \Sigma^{\lambda}(\mathcal{R})$ generates $D^b(\mathrm{Grass}_X(l,\mathcal{E}))$. 
\end{proof}
We now consider the higher direct images of $\Sigma^{\lambda}(\mathcal{R}^{\vee})$ under the Grassmannian bundle $\pi:\mathrm{Grass}(l,\mathcal{E})\rightarrow X$. We remind that we consider partitions $\lambda$ with at most $l$ rows and at most $r+1-l$ columns. One can extend the Schur functors and define them for a non-increasing sequence of arbitrary integers $\lambda_1\geq \lambda_2\geq...\geq \lambda_l$ by $\Sigma^{\lambda}(\mathcal{F}):=\Sigma^{\lambda+m}(\mathcal{F})\otimes \mathrm{det}(\mathcal{F})^{\otimes -m}$, where $\mathcal{F}$ is locally free of finite rank, $m\in \mathbb{N}$, $\lambda=(\lambda_1,...,\lambda_l)$ and $\lambda +m=(\lambda_1+m,...,\lambda_l+m)$. Note that $\Sigma^{\lambda}(\mathcal{F})^{\vee}\simeq \Sigma^{\lambda}(\mathcal{F}^{\vee})\simeq \Sigma^{-\lambda}(\mathcal{F})$, where $-\lambda=(-\lambda_l,-\lambda_{l-1},...,-\lambda_1)$. In the following we consider this extended Schur functors. The next lemma is well-known and can be found for instance in \cite{C1}, Lemma 4.1. 
\begin{lem}
Let $k$, $X$ and $\mathcal{E}$ be as above. Then for every partition $\lambda$, the higher direct images of $\Sigma^{\lambda}(\mathcal{R}^{\vee})$ under the Grassmann bundle $\pi:\mathrm{Grass}(l,\mathcal{E})\rightarrow X$ satisfy 
\[\mathbb{R}^s\pi_*(\Sigma^{\lambda}(\mathcal{R}^{\vee}))=\begin{cases}
\Sigma^{\lambda}(\mathcal{E}^{\vee})& \text{if s = 0 and if } \lambda_1\geq\lambda_2...\geq\lambda_l\geq 0\\
0& \text{otherwise} \\
\end{cases}\]
\end{lem}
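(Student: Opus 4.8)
**Proof plan for the computation of $\mathbb{R}^s\pi_*(\Sigma^\lambda(\mathcal{R}^\vee))$.**

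The plan is to reduce the statement to the classical fibrewise computation on a single Grassmannian $\mathrm{Grass}(l, r+1)$ and then to globalize via cohomology and base change. First I would work locally on $X$: since the question is local on the base, I may assume $\mathcal{E}$ is free, so $\mathrm{Grass}_X(l,\mathcal{E}) \cong X \times \mathrm{Grass}(l, r+1)$ and $\pi$ is the second projection composed with the projection to $X$; here $\mathcal{R}$ is pulled back from the tautological subbundle on $\mathrm{Grass}(l,r+1)$ and $\Sigma^\lambda(\mathcal{R}^\vee)$ is likewise a pullback. The key input is then the Borel--Weil--Bott theorem on the Grassmannian over the algebraically closed characteristic-zero field $k$: for a dominant weight $\lambda$ (i.e. $\lambda_1 \geq \cdots \geq \lambda_l$), the vector bundle $\Sigma^\lambda(\mathcal{S}^\vee)$ on $\mathrm{Grass}(l,r+1)$ — where $\mathcal{S}$ is the tautological subbundle — has cohomology concentrated in degree $0$ precisely when $\lambda_l \geq 0$, in which case $H^0 = \Sigma^\lambda(V^\vee)$ for $V$ the ambient space of dimension $r+1$, and vanishing cohomology in all degrees otherwise (the relevant "singular or negative" cases collapse to zero after the Weyl-group shift). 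This is exactly what is recorded in \cite{C1}, Lemma 4.1, or can be extracted from \cite{KAP}.

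Next I would promote this fibrewise statement to the statement about higher direct images. Since $\pi$ is flat and proper and the fibres are the Grassmannian, and since by Borel--Weil--Bott the fibrewise cohomology $H^s(X_z, \Sigma^\lambda(\mathcal{R}^\vee)|_{X_z})$ has dimension independent of $z \in X$ (it is either $\dim \Sigma^\lambda(k^{r+1\,\vee})$ in degree $0$ and $0$ elsewhere, or $0$ in all degrees), Grauert's theorem (cohomology and base change) applies: the sheaves $\mathbb{R}^s\pi_*(\Sigma^\lambda(\mathcal{R}^\vee))$ are locally free of the expected rank and commute with base change. In the free case $\mathrm{Grass}_X(l,\mathcal{E}) = X \times \mathrm{Grass}(l,r+1)$ one simply has $\mathbb{R}^s\pi_*(\Sigma^\lambda(\mathcal{R}^\vee)) = H^s(\mathrm{Grass}(l,r+1), \Sigma^\lambda(\mathcal{S}^\vee)) \otimes_k \mathcal{O}_X$ by the Künneth formula, which gives $\Sigma^\lambda(\mathcal{E}^\vee)$ when $\lambda_l \geq 0$, $s=0$, and $0$ otherwise. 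To pass from the local (free) description to the global one, I would observe that the formation of $\Sigma^\lambda(\mathcal{E}^\vee)$ and of the tautological sequence are compatible with the transition functions of $\mathcal{E}$, so the local isomorphisms glue; alternatively, one invokes the projection formula and the semiorthogonal / relative-Beilinson machinery, but the base-change argument is cleanest.

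The main obstacle — really the only nontrivial point — is the Borel--Weil--Bott computation itself in the two borderline regimes: distinguishing the case $\lambda_l \geq 0$ (where one gets $\Sigma^\lambda$ of the ambient bundle in degree $0$) from the case where some $\lambda_i < 0$ (where, after adding $\rho$ and acting by the Weyl group, the weight either becomes singular, forcing all cohomology to vanish, or lands in a higher cohomological degree in a way that still produces zero under $\pi_*$ for this particular family of bundles). Since the excerpt explicitly cites \cite{C1}, Lemma 4.1, as containing exactly this statement, I would simply cite it for the fibrewise assertion and spend the proof on the flat base change step; there is no need to reproduce the weight combinatorics. Everything else — flatness of $\pi$, properness, the reduction to $\mathcal{E}$ free — is routine.
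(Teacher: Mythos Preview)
Your proposal is correct and follows essentially the same route as the paper: reduce to the fibrewise statement on a single Grassmannian, invoke Kapranov's Borel--Weil--Bott computation (which the paper cites as \cite{KAP}, Lemma 2.2(a)), and then pass to the relative setting via base change. The paper's own proof is terser---it simply asserts that the fibre of $\mathbb{R}^s\pi_*(\Sigma^{\lambda}(\mathcal{R}^{\vee}))$ at $x$ equals $H^s(\mathrm{Grass}(l,E_x),\Sigma^{\lambda}(S_x^{\vee}))$ and varies $x$---whereas you spell out the Grauert/cohomology-and-base-change justification and the local trivialization more carefully, but the content is the same.
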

\begin{proof}
In the case the base scheme $X$ is a point, this was done by Kapranov \cite{KAP}, Lemma 2.2 (a) (see also \cite{KAP2}, Lemma 3.2 a)). Precisely, Kapranov proved the following: Let $Z=\mathrm{Grass}(l,n)$ be the Grassmannian for some $n$-dimensional vector space $V$ over a field of characteristic zero and $\Sigma^{\lambda}(\mathcal{S}^{\vee})$ the locally free sheaf obtained by applying the Schur functor to the dual of the tautological sheaf $\mathcal{S}$ of $\mathrm{Grass}(l,n)$, where $\lambda$ is a non-increasing collection of integers $\lambda_1\geq \lambda_2\geq ...\geq \lambda_l\geq -(n-l)$. Then one has
\[H^s(Z,\Sigma^{\lambda}(\mathcal{S}^{\vee}))=\begin{cases}
\Sigma^{\lambda}(V^{\vee})& \text{if s = 0 and if } \lambda_1\geq\lambda_2...\geq\lambda_l\geq 0\\
0& \text{otherwise} \\
\end{cases}\]
Now for $x\in X$ we have $\pi^{-1}(x)\simeq \mathrm{Grass}(l,E_x)$, where $E_x$ is the fiber of $\mathcal{E}$ over $x$. The fiber of $\mathbb{R}^s\pi_*(\Sigma^{\lambda}(\mathcal{R}^{\vee}))$ over $x$ is $H^s(\mathrm{Grass}(l,E_x), \Sigma^{\lambda}(\mathcal{R}^{\vee})_{|\mathrm{Grass}(l,E_x)})$. By the definition of the tautological bundle $\mathcal{R}$, the restriction of $\mathcal{R}$ to $\mathrm{Grass}(l,E_x)$ is exactly the tautological rank $l$ bundle on $\mathrm{Grass}(l,E_x)$. We denote it by $S_x$. Hence the restriction of $\Sigma^{\lambda}(\mathcal{R}^{\vee})$ to the fiber over $x$ is $\Sigma^{\lambda}(S^{\vee}_x)$. The above result then follows from the result of Kapranov and by varying the point $x\in X$. This completes the proof.
\end{proof}
Note that Lemma 4.5 also follows from the more general Lemma 4.13 below. Now with the two lemmas from above we obtain the following result:
\begin{thm}
Let $k$, $X$ and $\mathcal{E}$ be as above and suppose $X$ has a tilting bundle $\mathcal{T}_X$. Then the Grassmannian bundle $\mathrm{Grass}_X(l,\mathcal{E})$ admits a tilting bundle too. 
\end{thm}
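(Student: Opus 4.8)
\emph{Proof plan.} The plan is to produce an explicit tilting bundle on $Y:=\mathrm{Grass}_X(l,\mathcal{E})$ by gluing $\mathcal{T}_X$ along the slots of Orlov's semiorthogonal decomposition (1), inserting an auxiliary twist by a sufficiently ample line bundle on $X$ that will kill the ``off--diagonal'' extensions. Fix an ample line bundle $\mathcal{L}$ on $X$ (it exists, $X$ being projective) and let $f\colon P'\to\mathbb{Z}_{\ge 0}$ send a partition to its position in the totally ordered set $(P',\prec)$, so that $f(\lambda)<f(\mu)$ exactly when $\lambda\prec\mu$. I would take as candidate the locally free sheaf
\[
\mathcal{T}\ :=\ \bigoplus_{\lambda\in P'}\ \pi^{*}\!\bigl(\mathcal{T}_X\otimes\mathcal{L}^{\otimes f(\lambda)}\bigr)\otimes\Sigma^{\lambda}(\mathcal{R}),
\]
where ``sufficiently ample'' will mean that $\mathcal{L}$, and all of its positive tensor powers, have vanishing higher cohomology after tensoring with any member of a certain finite set of coherent sheaves on $X$; such an $\mathcal{L}$ exists since the set is finite, by Serre vanishing.

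Generation is the easy half. As $\mathcal{T}_X\otimes\mathcal{L}^{\otimes f(\lambda)}$ is again a tilting bundle (Proposition 2.6), it generates $D^{b}(X)$; under the equivalence $\pi^{*}(-)\otimes\Sigma^{\lambda}(\mathcal{R})\colon D^{b}(X)\stackrel{\sim}{\rightarrow}D^{b}(X)\otimes\Sigma^{\lambda}(\mathcal{R})$ the $\lambda$--summand of $\mathcal{T}$ generates $D^{b}(X)\otimes\Sigma^{\lambda}(\mathcal{R})$, so by the decomposition (1) the whole $\mathcal{T}$ generates $D^{b}(Y)$; this is just Lemma 4.4, slot by slot. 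For the vanishing $\mathrm{Hom}_{Y}(\mathcal{T},\mathcal{T}[i])=0$, $i\neq 0$, I would look at the $(\lambda,\mu)$--piece
\[
\mathrm{Hom}_{Y}\bigl(\pi^{*}(\mathcal{T}_X\otimes\mathcal{L}^{\otimes f(\lambda)})\otimes\Sigma^{\lambda}(\mathcal{R}),\ \pi^{*}(\mathcal{T}_X\otimes\mathcal{L}^{\otimes f(\mu)})\otimes\Sigma^{\mu}(\mathcal{R})[i]\bigr)
\]
and split into three cases. If $\mu\prec\lambda$, the source lies in a block of (1) strictly to the right of the block containing the target, so this group is $0$ for all $i$ by semiorthogonality. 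If $\lambda=\mu$, full faithfulness of $\pi^{*}(-)\otimes\Sigma^{\lambda}(\mathcal{R})$ identifies it with $\mathrm{Ext}^{i}_{X}(\mathcal{T}_X\otimes\mathcal{L}^{\otimes f(\lambda)},\mathcal{T}_X\otimes\mathcal{L}^{\otimes f(\lambda)})=\mathrm{Ext}^{i}_{X}(\mathcal{T}_X,\mathcal{T}_X)$, which vanishes for $i\neq 0$ since $\mathcal{T}_X$ is tilting. The remaining, and only nontrivial, case is $\lambda\prec\mu$.

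For $\lambda\prec\mu$, using that $\Sigma^{\lambda}(\mathcal{R})$ is locally free together with the projection formula and $\pi^{*}\dashv\mathbb{R}\pi_{*}$, the group in question becomes $\mathrm{Hom}_{X}\bigl(\mathcal{T}_X,\,\mathcal{T}_X\otimes\mathcal{L}^{\otimes m}\otimes\mathcal{G}_{\lambda\mu}[i]\bigr)$, where $m:=f(\mu)-f(\lambda)\ge 1$ and $\mathcal{G}_{\lambda\mu}:=\mathbb{R}\pi_{*}\bigl(\Sigma^{\mu}(\mathcal{R})\otimes\Sigma^{\lambda}(\mathcal{R})^{\vee}\bigr)$. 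Writing $\Sigma^{\lambda}(\mathcal{R})^{\vee}=\Sigma^{\lambda}(\mathcal{R}^{\vee})$, $\Sigma^{\mu}(\mathcal{R})=\Sigma^{-\mu}(\mathcal{R}^{\vee})$, and decomposing the $\mathrm{GL}_{l}$--tensor product $\Sigma^{\lambda}(\mathcal{R}^{\vee})\otimes\Sigma^{-\mu}(\mathcal{R}^{\vee})=\bigoplus_{\nu}\Sigma^{\nu}(\mathcal{R}^{\vee})^{\oplus m_{\nu}}$ by the Littlewood--Richardson rule, every $\nu$ occurring is non--increasing with $\nu_{l}\ge\lambda_{l}-\mu_{1}\ge-(r+1-l)$, hence lies in the range of Lemma 4.5; therefore $\mathcal{G}_{\lambda\mu}$ is concentrated in cohomological degree $0$, where it is the locally free sheaf $\bigoplus_{\nu_{l}\ge 0}\Sigma^{\nu}(\mathcal{E}^{\vee})^{\oplus m_{\nu}}$. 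Consequently $\mathrm{Hom}_{X}\bigl(\mathcal{T}_X,\,\mathcal{T}_X\otimes\mathcal{L}^{\otimes m}\otimes\mathcal{G}_{\lambda\mu}[i]\bigr)=H^{i}\bigl(X,\,\mathcal{E}nd(\mathcal{T}_X)\otimes\mathcal{G}_{\lambda\mu}\otimes\mathcal{L}^{\otimes m}\bigr)$, which is $0$ for $i<0$ (cohomology of a sheaf) and, since $m\ge 1$ and $\mathcal{L}$ was chosen ample enough for the finitely many sheaves $\mathcal{E}nd(\mathcal{T}_X)\otimes\mathcal{G}_{\lambda\mu}$ and all their positive $\mathcal{L}$--twists, also $0$ for $i>0$ by Serre vanishing. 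Summing over $\lambda,\mu\in P'$ gives $\mathrm{Hom}_{Y}(\mathcal{T},\mathcal{T}[i])=0$ for $i\neq 0$, so $\mathcal{T}$ is a tilting bundle; finite global dimension of $\mathrm{End}(\mathcal{T})$ then follows from Theorem 2.3(iii) since $Y$ is smooth.

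The one step that really requires the twist — and the reason the naive guess does not work — is precisely the case $\lambda\prec\mu$. Indeed $\bigoplus_{\lambda}\pi^{*}\mathcal{T}_X\otimes\Sigma^{\lambda}(\mathcal{R})$ is \emph{not} tilting in general: already for $l=1$ it equals $\bigoplus_{j=0}^{r}\pi^{*}\mathcal{T}_X\otimes\mathcal{O}_{\mathbb{P}(\mathcal{E})}(j)$, and its mutual extensions are the groups $\mathrm{Ext}^{i}_{X}\bigl(\mathcal{T}_X,\mathcal{T}_X\otimes\mathrm{Sym}^{m}\mathcal{E}^{\vee}\bigr)$ ($1\le m\le r$), which need not vanish for $i\neq 0$. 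The ample twist $\mathcal{L}^{\otimes f(\lambda)}$ converts exactly these troublesome contributions into higher cohomology of a fixed sheaf tensored by an arbitrarily positive line bundle, where Serre vanishing finishes the job; the ``wrong order'' and diagonal parts are unaffected, the former being zero on the nose (there $\mathbb{R}\pi_{*}=0$) and the latter being controlled by $\mathcal{T}_X$ alone. Beyond this device, the argument uses only Orlov's decomposition (1), Lemma 4.5 (to keep the direct images $\mathcal{G}_{\lambda\mu}$ locally free and in degree $0$) and the invariance of tilting bundles under line--bundle twists — it is, in miniature, the mechanism later generalized in Theorem 4.15.
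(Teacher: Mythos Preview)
Your argument is correct and uses the same core ingredients as the paper --- Orlov's decomposition (1) together with Lemma 4.4 for generation, adjunction plus the projection formula plus the Littlewood--Richardson rule and Lemma 4.5 to control the pushforwards, and Serre vanishing to kill the residual extensions. The difference lies in how the ample twist is inserted. The paper's own proof of Theorem 4.6 does \emph{not} twist the individual summands; instead it replaces $\mathcal{E}$ by $\mathcal{E}\otimes\mathcal{L}^{\otimes(-n)}$ for a sufficiently ample $\mathcal{L}$ and large $n$, using the isomorphism $\mathrm{Grass}_X(l,\mathcal{E})\simeq\mathrm{Grass}_X(l,\mathcal{E}\otimes\mathcal{L}^{\otimes(-n)})$ of Remark 4.3. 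This converts each nonzero $\Sigma^{\gamma}(\mathcal{E}^{\vee})$ in the direct image into $\Sigma^{\gamma}(\mathcal{E}^{\vee})\otimes\mathcal{L}^{\otimes n|\gamma|}$, and since $|\gamma|>0$ whenever $\gamma\neq 0$, Serre vanishing applies uniformly, while the $\gamma=0$ piece is handled by the tilting property of $\mathcal{T}_X$. Your per-summand twist $\pi^{*}\mathcal{L}^{\otimes f(\lambda)}$ is precisely the device of Theorem 4.15, which the paper itself points out (Example 4.19) gives an alternative proof of Theorem 4.6. Your route has the small advantage that the case $\mu\prec\lambda$ is dispatched immediately by semiorthogonality rather than via the direct image computation; the paper's trick of twisting $\mathcal{E}$, on the other hand, avoids choosing a numbering $f$ of the partitions and keeps the candidate in the clean form $\bigoplus_{\lambda}\pi^{*}\mathcal{T}_X\otimes\Sigma^{\lambda}(\mathcal{R}')$ for the tautological bundle $\mathcal{R}'$ on the isomorphic Grassmannian bundle.
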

\begin{proof}
We prove that $\mathcal{T}=\bigoplus_{\lambda\in P'}\pi^*\mathcal{T}_X\otimes \Sigma^{\lambda}(\mathcal{R})$ is a tilting bundle on $\mathrm{Grass}_X(l,\mathcal{E})$. For this, we investigate when 
\begin{eqnarray}
\mathrm{Ext}^i(\bigoplus_{\lambda}\pi^*\mathcal{T}_X\otimes \Sigma^{\lambda}(\mathcal{R}),\bigoplus_{\lambda}\pi^*\mathcal{T}_X\otimes \Sigma^{\lambda}(\mathcal{R}))
\end{eqnarray} vanishes for $i>0$.
It is enough to calculate
\begin{center}
$\mathrm{Ext}^i(\pi^*\mathcal{T}_X\otimes \Sigma^{\lambda}(\mathcal{R}),\pi^*\mathcal{T}_X\otimes \Sigma^{\mu}(\mathcal{R}))$
\end{center} for two partitions $\lambda, \mu \in P'$. Adjunction of $\pi^*$ and $\pi_*$ and the projection formula yields
\begin{center}
$\mathrm{Hom}(\pi^*\mathcal{T}_X\otimes \Sigma^{\lambda}(\mathcal{R}),\pi^*\mathcal{T}_X\otimes \Sigma^{\mu}(\mathcal{R})[l])\simeq \mathrm{Hom}(\mathcal{T}_X,\mathcal{T}_X\otimes \mathbb{R}\pi_*(\Sigma^{\lambda}(\mathcal{R})^{\vee}\otimes \Sigma^{\mu}(\mathcal{R}))[l])$.
\end{center}
Hence we have to calculate $\mathbb{R}\pi_*(\Sigma^{\lambda}(\mathcal{R})^{\vee}\otimes \Sigma^{\mu}(\mathcal{R}))\simeq \mathbb{R}\pi_*\mathcal{H}om(\Sigma^{\lambda}(\mathcal{R}),\Sigma^{\mu}(\mathcal{R}))$. From the Littlewood--Richardson rule (see \cite{ABW}, Theorem IV.2.1), it follows that we can decompose $\mathcal{H}om(\Sigma^{\lambda}(-),\Sigma^{\mu}(-))$ into a direct sum of irreducible summands $\Sigma^{\gamma}(-)$. Since $\lambda$ and $\mu$ are partitions with at most $l$ rows and at most $r+1-l$ columns, it follows that $\gamma$ is a non-increasing sequence of integers $\gamma_1\geq \gamma_2\geq...\geq \gamma_l\geq -(r+1-l)$ (see \cite{KAP2}, 3.3). Now Lemma 4.5 yields for each irreducible summand $\Sigma^{\gamma}(\mathcal{R})\simeq \Sigma^{-\gamma}(\mathcal{R}^{\vee})$ of $\mathcal{H}om(\Sigma^{\lambda}(\mathcal{R}),\Sigma^{\mu}(\mathcal{R}))$ that
\begin{center}
$\mathbb{R}\pi_*(\Sigma^{\gamma}(\mathcal{R}))\simeq \mathbb{R}\pi_*(\Sigma^{-\gamma}(\mathcal{R}^{\vee}))\simeq \Sigma^{-\gamma}(\mathcal{E}^{\vee})$
\end{center} for $-\gamma\geq 0$, i.e., $-\gamma_l\geq -\gamma_{l-1}\geq...\geq -\gamma_1\geq 0$, otherwise $\mathbb{R}\pi_*(\Sigma^{\gamma}(\mathcal{R}))=0$. Since we have only finitely many partitions in $P'$ and finitely many irreducible summands $\Sigma^{\gamma}(\mathcal{R})$ of $\mathcal{H}om(\Sigma^{\lambda}(\mathcal{R}),\Sigma^{\mu}(\mathcal{R}))$, it is enough to prove the vanishing of 
\begin{center}
$\mathrm{Ext}^i(\mathcal{T}_X,\mathcal{T}_X\otimes \Sigma^{\gamma}(\mathcal{E}^{\vee}))$
\end{center} for $i>0$ and $\gamma \geq 0$. For the case $\gamma=0$ one has $\Sigma^{\gamma}(\mathcal{R}^{\vee})= \mathcal{O}_{\mathcal{E}}$ and hence $\mathbb{R}\pi_*\mathcal{O}_{\mathcal{E}}\simeq \mathcal{O}_X$. Therefore the vanishing of (2) in the case $\gamma=0$ follows from the fact that $\mathcal{T}_X$ is a tilting bundle on $X$ by assumption. It remains the case $\gamma >0$. Note that for an arbitrary locally free sheaf $\mathcal{F}$ and an arbitrary invertible sheaf $\mathcal{L}$ applying the Schur functor to $\mathcal{F}\otimes \mathcal{L}$ gives $\Sigma^{\gamma}(\mathcal{F}\otimes \mathcal{L})\simeq \Sigma^{\gamma}(\mathcal{F})\otimes \mathcal{L}^{\otimes |\gamma|}$, provided $\gamma\geq 0$. Since there are only finitely many summands $\Sigma^{\gamma}(\mathcal{R})$ of $\mathcal{H}om(\Sigma^{\lambda}(\mathcal{R}),\Sigma^{\mu}(\mathcal{R}))$ and $X$ is projective, we can choose for a fixed $\gamma > 0$ an ample invertible sheaf $\mathcal{L}$ and an integer $n_{\gamma}>>0$ such that 
\begin{center}
$\mathrm{Ext}^i(\mathcal{T}_X,\mathcal{T}_X\otimes \Sigma^{\gamma}((\mathcal{E}\otimes \mathcal{L}^{\otimes (-n_{\gamma})})^{\vee}))\simeq \mathrm{Ext}^i(\mathcal{T}_X,\mathcal{T}_X\otimes \Sigma^{\gamma}(\mathcal{E}^{\vee})\otimes \mathcal{L}^{\otimes (n_{\gamma}\cdot |\gamma|)})=0$
\end{center} for $i>0$. As mentioned above, there are only finitely many irreducible summands $\Sigma^{\gamma}(-)$ of $\mathcal{H}om(\Sigma^{\lambda}(-),\Sigma^{\mu}(-))$ so that we can consider an integer $n>\mathrm{max}\{n_{\gamma}| \mathrm{Ext}^i(\mathcal{T}_X,\mathcal{T}_X\otimes \Sigma^{\gamma}(\mathcal{E}^{\vee})\otimes \mathcal{L}^{\otimes (n_{\gamma}\cdot |\gamma|)})=0 $ for $i>0$$\}$. For this $n>>0$ we take the invertible sheaf $\mathcal{L}^{\otimes (-n)}$ and the Grassmannian bundle $\mathrm{Grass}(l,\mathcal{E}\otimes\mathcal{L}^{\otimes (-n)})$, denoting the respective tautological bundle on $\mathrm{Grass}(l,\mathcal{E}\otimes\mathcal{L}^{\otimes (-n)})$ by $\mathcal{R}'$. On this Grassmannian bundle we have for all $\gamma>0$ with $\mathbb{R}\pi_*(\Sigma^{\gamma}(\mathcal{R}'))=\Sigma^{\gamma}((\mathcal{E}\otimes \mathcal{L}^{\otimes (-n)})^{\vee})$:
\begin{center}
$\mathrm{Ext}^i(\mathcal{T}_X,\mathcal{T}_X\otimes \mathbb{R}\pi_*(\Sigma^{\gamma}(\mathcal{R})))\simeq \mathrm{Ext}^i(\mathcal{T}_X,\mathcal{T}_X\otimes \Sigma^{\gamma}(\mathcal{E}^{\vee})\otimes \mathcal{L}^{(n\cdot|\gamma|)}) =0$
\end{center} for $i>0$. This yields the vanishing of (2) for $\mathcal{T}'=\bigoplus_{\lambda\in P'}\pi^*\mathcal{T}_X\otimes \Sigma^{\lambda}(\mathcal{R}')$ on $\mathrm{Grass}(l,\mathcal{E}\otimes\mathcal{L}^{\otimes (-n)})$. Since $\mathcal{T}_X$ is a tilting bundle for $D^b(X)$, it generates $D^b(X)$ and according to Lemma 4.4 the object $\mathcal{T}'$ generates $D^b(\mathrm{Grass}(l,\mathcal{E}\otimes\mathcal{L}^{\otimes (-n)}))$. This gives us a tilting object $\mathcal{T}'$ on $\mathrm{Grass}(l,\mathcal{E}\otimes\mathcal{L}^{-n})$. By Remark 4.3 we have an isomorphism $\mathrm{Grass}(l,\mathcal{E}\otimes\mathcal{L}^{\otimes (-n)})\simeq \mathrm{Grass}(l,\mathcal{E})$ and hence we get a tilting object $\widetilde{\mathcal{T}}$ for $D^b(\mathrm{Grass}(l,\mathcal{E}))$. Finally, since $\mathrm{Grass}(l,\mathcal{E})$ is by assumption smooth over $k$, Theorem 2.3 implies that $\mathrm{End}(\widetilde{\mathcal{T}})$ has finite global dimension. This completes the proof.  
\end{proof}
Let $\mathcal{E}$ be a locally free sheaf of rank $r+1$ on some smooth projective and integral $k$-scheme $X$ as above. For $1\leq l_1<...<l_m\leq r+1$ consider the relative flag variety $\mathrm{Flag}_X(l_1,...,l_m,\mathcal{E})$ of type $(l_1,...,l_m)$ in the fibers of $\mathcal{E}$ with structure morphism $\pi:\mathrm{Flag}_X(l_1,...,l_m,\mathcal{E})\rightarrow X$. One has the tautological subbundles $\mathcal{R}_1\subset\mathcal{R}_2\subset ... \subset\mathcal{R}_m\subset \pi^*\mathcal{E}$ and, by construction, $\mathrm{Flag}_X(l_1,...,l_m,\mathcal{E})$ is obtained as the successive iteration of Grassmannian bundles
\begin{center}
$\mathrm{Flag}_X(l_1,...,l_m,\mathcal{E})=\mathrm{Grass}_{\mathrm{Flag}_X(l_2,...,l_m,\mathcal{E})}(l_1,\mathcal{R}_2)\longrightarrow \mathrm{Flag}_X(l_2,...,l_m,\mathcal{E})=\mathrm{Grass}_{\mathrm{Flag}_X(l_3,...,l_m,\mathcal{E})}(l_2,\mathcal{R}_3)\longrightarrow ...\longrightarrow X$.
\end{center} Theorem 4.6 now implies:
\begin{cor}
Let $k$ be an algebraically closed field of characteristic zero, $X$ a smooth projective $k$-scheme and $\mathcal{E}$ a locally free sheaf of finite rank on $X$. Suppose $\mathcal{T}$ is a tilting bundle for $X$. Then the relative flag $\mathrm{Flag}_X(l_1,...,l_m,\mathcal{E})$ admits a tilting bundle too.
\end{cor}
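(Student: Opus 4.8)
The plan is to proceed by induction on $m$, exploiting the description of the relative flag variety as an iterated tower of Grassmannian bundles recalled just above, together with Theorem 4.6. The base case $m=1$ is immediate: $\mathrm{Flag}_X(l_1,\mathcal{E})=\mathrm{Grass}_X(l_1,\mathcal{E})$, so the assertion is precisely Theorem 4.6 applied to $X$ and $\mathcal{E}$ (since $1\leq l_1\leq r+1$ the Grassmannian bundle is defined, and $X$ is smooth, projective and integral over $k$, as assumed throughout this section).

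For the inductive step I would assume the corollary for flags of length $m-1$, so that $Y:=\mathrm{Flag}_X(l_2,\dots,l_m,\mathcal{E})$ admits a tilting bundle $\mathcal{T}_Y$. The point that needs care is to check that $Y$ still lies in the class of schemes to which Theorem 4.6 applies, i.e. that it is smooth, projective and integral over $k$. I would verify this by observing that $Y$ is built from $X$ by a finite tower of Grassmannian bundles and that a Grassmannian bundle $\mathrm{Grass}_Z(l,\mathcal{F})\rightarrow Z$ over a smooth projective integral $k$-scheme $Z$ is again smooth over $k$ (the structure morphism is smooth with smooth fibers and $Z$ is smooth over $k$), projective (it embeds as a closed subscheme of a projective bundle over $Z$ via a Pl\"ucker-type embedding), and integral (reduced because it is smooth over a field, and irreducible because it is a fibration with irreducible base $Z$ and irreducible fibers isomorphic to an ordinary Grassmannian). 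Hence $Y$ inherits these properties and carries the tilting bundle $\mathcal{T}_Y$.

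Having established this, I would invoke the identification recalled above,
\begin{center}
$\mathrm{Flag}_X(l_1,\dots,l_m,\mathcal{E})\;\simeq\;\mathrm{Grass}_{Y}(l_1,\mathcal{R}_2)$,
\end{center}
where $\mathcal{R}_2$ is the rank-$l_2$ tautological subbundle on $Y$, which is locally free of finite rank and satisfies $l_1<l_2=\mathrm{rk}(\mathcal{R}_2)$ so that the Grassmannian bundle is defined. Applying Theorem 4.6 with base scheme $Y$, locally free sheaf $\mathcal{R}_2$ and tilting bundle $\mathcal{T}_Y$ then produces a tilting bundle on $\mathrm{Grass}_{Y}(l_1,\mathcal{R}_2)\simeq\mathrm{Flag}_X(l_1,\dots,l_m,\mathcal{E})$, which closes the induction. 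I do not expect any genuine difficulty here, since the entire analytic content is already contained in Theorem 4.6; the only step I would be careful about is exactly the bookkeeping of the inductive step, namely making sure that each intermediate flag bundle $\mathrm{Flag}_X(l_j,\dots,l_m,\mathcal{E})$ remains smooth, projective and integral over $k$ so that Theorem 4.6 can be reapplied at every stage, which is routine.
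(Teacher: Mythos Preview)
Your proposal is correct and follows exactly the argument the paper intends: the paper states the corollary immediately after recalling that $\mathrm{Flag}_X(l_1,\dots,l_m,\mathcal{E})$ is a successive iteration of Grassmannian bundles and simply writes ``Theorem 4.6 now implies'', i.e.\ the implicit proof is precisely your induction on $m$ applying Theorem 4.6 at each stage. Your additional verification that each intermediate flag bundle is smooth, projective and integral is the routine bookkeeping the paper leaves to the reader.
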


\begin{exam}
Let $S$ be a rational surface over $k=\mathbb{C}$ and $\mathcal{E}$ a locally free sheaf of finite rank on $S$. Hille and Perling \cite{HIP3} proved that $S$ always admits a tilting bundle $\mathcal{T}_S$. Now Corollary 4.7 implies that $\mathrm{Flag}_S(l_1,...,l_t,\mathcal{E})$ admits a tilting bundle too.
\end{exam}
B\"ohning \cite{BOE} constructed a semiorthogonal decomposition for quadric bundles under certain assumptions. In some following work Kuznetsov \cite{KUZ} studied quadric fibrations and intersections of quadrics and proved the existence of semiorthogonal decompositions in hole generality. We follow the ideas of \cite{BOE} and \cite{KUZ} to construct a tilting object on quadric bundles (quadric fibrations). We start with analyzing the result given in \cite{BOE}. For this, we take a smooth projective and integral scheme over $\mathbb{C}$. Let $\mathcal{E}$ be a locally free sheaf of rank $r+1$ and $q$ a symmetric quadratic form $q\in \Gamma(X,\mathrm{Sym}^2(\mathcal{E}^{\vee}))$ which is non-degenerate on each fiber. We denote by $\mathcal{Q}=\{q=0\}\subset \mathbb{P}(\mathcal{E})$ the quadric bundle and by $\pi$ the projection $\pi:\mathcal{Q}\rightarrow X$. Under some technical assumptions stated below, B\"ohning \cite{BOE} established two ordered sets of locally free sheaves on $\mathcal{Q}$
\begin{eqnarray}
\mathcal{V}=\{\Sigma(-r+1)\prec\mathcal{O}_{\mathcal{Q}}(-r+2)\prec...\prec\mathcal{O}_{\mathcal{Q}}(-1)\prec\mathcal{O}_{\mathcal{Q}}\} \\
\mathcal{V}'=\{\Sigma^+(-r+1)\prec\Sigma^-(-r+1)\prec...\prec\mathcal{O}_{\mathcal{Q}}(-1)\prec\mathcal{O}_{\mathcal{Q}}\}.
\end{eqnarray} 
We refer to \cite{BOE} for details on the twisted spinor bundles $\Sigma(-r+1)$, $\Sigma^+(-r+1)$ and $\Sigma^-(-r+1)$ in this relative setting. In loc.cit. it is proved the following (\cite {BOE}, Theorem 3.2.7): 
\begin{thm}
Let $X$ be as above, $\mathcal{E}$ an orthogonal locally free sheaf of rank $r+1$ on $X$ and $\mathcal{Q}$ the quadric bundle. Suppose $H^1(X,\mathbb{Z}/2\mathbb{Z})=0$ and that $\mathcal{E}$ carries a spin structure. Then there is a semiorthogonal decomposition 
\begin{eqnarray*}
D^b(\mathcal{Q})=\langle D^b(X)\otimes \Sigma (-r+1), D^b(X)\otimes \mathcal{O}_{\mathcal{Q}}(-r+2),\\
...,D^b(X)\otimes \mathcal{O}_{\mathcal{Q}}(-1), D^b(X)\rangle
\end{eqnarray*} for $r+1$ odd and 
\begin{eqnarray*}
D^b(\mathcal{Q})=\langle D^b(X)\otimes \Sigma^+ (-r+1), D^b(X)\otimes \Sigma^- (-r+1),\\
...,D^b(X)\otimes \mathcal{O}_{\mathcal{Q}}(-1), D^b(X)\rangle
\end{eqnarray*} for $r+1$ even.
\end{thm}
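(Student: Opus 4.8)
The plan is to assemble the semiorthogonal decomposition from three ingredients: a globally defined relative spinor bundle on $\mathcal{Q}$, a relative cohomology computation along $\pi$ that produces the semiorthogonality relations and the fully‑faithfulness of the functors $\pi^{*}(-)\otimes\mathcal{F}$, and a fiberwise generation argument that reduces everything to Kapranov's decomposition of the derived category of a single smooth quadric. The generation step is where the real work lies.

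First I would make the spinor bundle(s) available as honest locally free sheaves on $\mathcal{Q}$. To the orthogonal bundle $(\mathcal{E},q)$ one attaches its sheaf of even Clifford algebras $\mathcal{C}_{0}(\mathcal{E},q)$, an Azumaya algebra on $X$ whose Brauer class is the obstruction to a global (half-)spinor bundle. The hypotheses $H^{1}(X,\mathbb{Z}/2\mathbb{Z})=0$ and the existence of a spin structure on $\mathcal{E}$ are precisely what is needed to split $\mathcal{C}_{0}(\mathcal{E},q)$, so that it carries a locally free module restricting on each fiber $X_{z}$ to the spinor representation of $(\mathcal{E}_{z},q_{z})$ --- irreducible when $r+1$ is odd, a sum of two half-spinor representations when $r+1$ is even. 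Pulling back to $\mathcal{Q}$ and twisting by the appropriate power of $\mathcal{O}_{\mathcal{Q}}(1)$ yields $\Sigma(-r+1)$, respectively $\Sigma^{\pm}(-r+1)$; by construction the restriction of each member of $\mathcal{V}$ (resp. $\mathcal{V}'$) to a fiber $\mathcal{Q}_{z}=\{q_{z}=0\}\subset\mathbb{P}(\mathcal{E}_{z})$ is precisely the corresponding member of Kapranov's full exceptional collection on the smooth quadric $\mathcal{Q}_{z}$.

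Next I would verify the semiorthogonality and the equivalences. As in the proof of Theorem 4.6, adjunction between $\pi^{*}$ and $\mathbb{R}\pi_{*}$ together with the projection formula turns every relevant $\mathrm{Hom}$-space in $D^{b}(\mathcal{Q})$ into one of the form $\mathrm{Hom}_{D^{b}(X)}(\mathcal{A},\mathcal{B}\otimes\mathbb{R}\pi_{*}(\mathcal{F}_{i}^{\vee}\otimes\mathcal{F}_{j}))$, with $\mathcal{F}_{i},\mathcal{F}_{j}$ running over the members of $\mathcal{V}$ or $\mathcal{V}'$. It is then enough to compute $\mathbb{R}\pi_{*}$ of the finitely many sheaves $\mathcal{O}_{\mathcal{Q}}(a)$, $\Sigma^{(\pm)}\otimes\mathcal{O}_{\mathcal{Q}}(a)$ and $(\Sigma^{(\pm)})^{\vee}\otimes\Sigma^{(\pm)}\otimes\mathcal{O}_{\mathcal{Q}}(a)$ that arise; each is computed fiberwise, the fiber of $\mathbb{R}^{s}\pi_{*}\mathcal{F}$ at $z$ being $H^{s}(\mathcal{Q}_{z},\mathcal{F}|_{\mathcal{Q}_{z}})$, and these groups are furnished by Kapranov's cohomology calculations for quadrics \cite{KAP2}, whereupon cohomology and base change --- applicable since the higher direct images in question turn out to be locally free --- promotes the answer to $X$. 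The vanishings packaged in Kapranov's collection then give the conditions $i>j$ of Definition 2.10(ii), while $\mathbb{R}\pi_{*}\mathcal{O}_{\mathcal{Q}}=\mathcal{O}_{X}$ and $\mathbb{R}\pi_{*}((\Sigma^{(\pm)})^{\vee}\otimes\Sigma^{(\pm)})=\mathcal{O}_{X}$ (exceptionality of the spinor bundle on each fiber) show that $\pi^{*}(-)\otimes\mathcal{F}:D^{b}(X)\to D^{b}(X)\otimes\mathcal{F}$ is an equivalence for each building block $\mathcal{F}$; the right adjoints required by Definition 2.10(i) are then produced in the standard way from $\mathbb{R}\pi_{*}$ and these equivalences.

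Finally, and most importantly, I would prove generation. If $\mathcal{N}\in D^{b}(\mathcal{Q})$ lies in the right orthogonal of every subcategory $D^{b}(X)\otimes\mathcal{F}$ in the list, then restricting to a fiber $\mathcal{Q}_{z}$ and using that the collection becomes Kapranov's \emph{full} exceptional collection on that quadric forces $\mathcal{N}|_{\mathcal{Q}_{z}}=0$ for every $z$, and hence $\mathcal{N}=0$. The hard part will be carrying this out rigorously over all of $X$ at once: one needs either a flatness and base-change argument in the spirit of Samokhin \cite{SAM} to pass from the fiberwise vanishing to $\mathcal{N}=0$, or --- cleaner but more laborious --- a relative Beilinson-type resolution of the structure sheaf of the diagonal of $\mathcal{Q}\times_{X}\mathcal{Q}$, assembled uniformly over $X$ out of Kapranov's resolution of the diagonal of a quadric and the relative spinor bundle. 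An alternative is simply to invoke Kuznetsov's analysis of quadric fibrations \cite{KUZ}, which establishes exactly such a semiorthogonal decomposition in general, the Clifford-algebra factor being split under our hypotheses on $X$ and $\mathcal{E}$. Once generation is in hand, the equivalences of the previous step assemble the subcategories into the asserted decomposition, the two displayed forms corresponding to $r+1$ odd and $r+1$ even.
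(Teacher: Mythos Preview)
Your outline is reasonable and follows the expected path for such a statement: construct global spinor bundles via the even Clifford algebra (split under the spin hypothesis and $H^{1}(X,\mathbb{Z}/2\mathbb{Z})=0$), compute $\mathbb{R}\pi_{*}$ of the relevant twists fiberwise via Kapranov's calculations, and establish generation either by a relative resolution of the diagonal or by a base-change argument. This is essentially how B\"ohning proceeds.

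However, there is nothing in the paper to compare your proposal against. The paper does not prove this theorem; it is quoted verbatim as \cite{BOE}, Theorem~3.2.7, and used as a black box. The only piece of B\"ohning's argument that the paper reproduces is the statement of the direct-image computation (Lemma~4.10), again without proof. So your proposal is not so much an alternative to the paper's proof as a sketch of the proof the paper is citing. If your task was to reconstruct the paper's own argument, the correct answer is simply that the paper provides none and defers to \cite{BOE}.
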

Note that the spin structure of $\mathcal{E}$ somehow guarantees the existence of the spinor bundles in the relative setting. The proof of Theorem 4.9 needs the following, for our purposes also, very important observation (see \cite{BOE}, Lemma 3.2.5).
\begin{lem}
Consider the two ordered sets (3) and (4) from above. If $\mathcal{W},\mathcal{V}_1,\mathcal{V}_2\in \mathcal{V}$ (resp. $\in \mathcal{V}'$) with $\mathcal{V}_1\prec \mathcal{V}_2$, $\mathcal{V}_1\neq \mathcal{V}_2$, then one has
\begin{itemize}
      \item[\bf (i)] $\mathbb{R}^i\pi_*(\mathcal{W}\otimes \mathcal{W}^{\vee})=0$, $\forall i\neq 0$
      \item[\bf (ii)] $\mathbb{R}^i\pi_*(\mathcal{V}_1\otimes \mathcal{V}^{\vee}_2)=0$, $\forall i\in\mathbb{Z}$
			\item[\bf (iii)] $\mathbb{R}^i\pi_*(\mathcal{V}_2\otimes \mathcal{V}^{\vee}_1)=0$, $\forall i\neq 0$
		\end{itemize} and the canonical morphism $\pi_*(\mathcal{W}\otimes \mathcal{W}^{\vee})\rightarrow \mathcal{O}_X$ is an isomorphism.
\end{lem}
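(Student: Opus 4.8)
The plan is to reduce the entire statement to a fibrewise computation on a smooth quadric and then to push it down via cohomology and base change. First I would recall that $\pi\colon\mathcal{Q}\to X$ is flat and proper, that for every $z\in X$ the fibre $\mathcal{Q}_z=\pi^{-1}(z)$ is a smooth quadric hypersurface $Q\subset\mathbb{P}(\mathcal{E}_z)=\mathbb{P}^r$ of dimension $r-1$, and that $\mathcal{O}_{\mathcal{Q}}(j)|_{\mathcal{Q}_z}\simeq\mathcal{O}_Q(j)$. The place where the hypotheses $H^1(X,\mathbb{Z}/2\mathbb{Z})=0$ and the spin structure on $\mathcal{E}$ enter is exactly the one made in \cite{BOE}: they guarantee that the twisted spinor bundles $\Sigma(-r+1)$, $\Sigma^{\pm}(-r+1)$ exist globally on $\mathcal{Q}$ and restrict on each fibre to the corresponding (twisted) spinor bundle(s) on $Q$, at worst up to a tensor factor pulled back from $X$. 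Since all of $\mathcal{W},\mathcal{V}_1,\mathcal{V}_2$ are locally free, the sheaves $\mathcal{W}\otimes\mathcal{W}^{\vee}$, $\mathcal{V}_1\otimes\mathcal{V}_2^{\vee}$, $\mathcal{V}_2\otimes\mathcal{V}_1^{\vee}$ are locally free on $\mathcal{Q}$ and their formation commutes with restriction to fibres; moreover any line-bundle factor pulled back from $X$ cancels in $\mathcal{W}\otimes\mathcal{W}^{\vee}$ and, by the projection formula, does not affect the vanishing of $\mathbb{R}^i\pi_*$ in the mixed cases.

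Second, I would invoke the description of $D^b(Q)$ for a smooth quadric: by Kapranov \cite{KAP2}, together with the classical computation of the cohomology of twisted spinor bundles, the ordered collections $\Sigma(-r+1),\mathcal{O}_Q(-r+2),\dots,\mathcal{O}_Q$ (for $r+1$ odd) and $\Sigma^+(-r+1),\Sigma^-(-r+1),\dots,\mathcal{O}_Q$ (for $r+1$ even) are full strongly exceptional collections on $Q$. Writing $F|_Q$ for the restriction to a fibre, strong exceptionality gives $H^i(Q,\mathcal{W}^{\vee}\otimes\mathcal{W}|_Q)=0$ and $H^i(Q,\mathcal{V}_1^{\vee}\otimes\mathcal{V}_2|_Q)=0$ for all $i\neq 0$, and $H^0(Q,\mathcal{W}^{\vee}\otimes\mathcal{W}|_Q)=\mathrm{End}_Q(\mathcal{W}|_Q)=k$, while the exceptionality of the \emph{ordered} collection (i.e. semiorthogonality) gives $H^i(Q,\mathcal{V}_2^{\vee}\otimes\mathcal{V}_1|_Q)=0$ for every $i\in\mathbb{Z}$ whenever $\mathcal{V}_1\prec\mathcal{V}_2$. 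Since $H^i(Q,\mathcal{V}_1\otimes\mathcal{V}_2^{\vee}|_Q)=H^i(Q,\mathcal{V}_2^{\vee}\otimes\mathcal{V}_1|_Q)$, this is exactly the fibrewise form of (ii), and the same reading gives the fibrewise forms of (i) and (iii).

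Third, I would transfer these fibrewise vanishings to $\mathbb{R}^i\pi_*$ using the cohomology-and-base-change (Grauert) theorem: since $\pi$ is flat and proper, $X$ is smooth, hence reduced, and the fibre cohomology $H^i(\mathcal{Q}_z,-)$ vanishes in the relevant degrees for all $z$, one concludes $\mathbb{R}^i\pi_*=0$ there, which gives (i)--(iii). For the final assertion, in case (i) the only nonvanishing fibre cohomology is $H^0$, of constant rank $1$, so $\pi_*(\mathcal{W}\otimes\mathcal{W}^{\vee})$ is an invertible sheaf whose formation commutes with base change; the canonical morphism $\pi_*(\mathcal{W}\otimes\mathcal{W}^{\vee})\to\mathcal{O}_X$ adjoint to $\mathrm{id}\in\mathrm{Hom}(\mathcal{O}_{\mathcal{Q}},\mathcal{W}\otimes\mathcal{W}^{\vee})$ restricts on each fibre to the canonical map $\mathrm{End}_Q(\mathcal{W}|_Q)=k\cdot\mathrm{id}\to k$, which is an isomorphism, and a morphism of line bundles that is an isomorphism on every fibre is an isomorphism.

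The main obstacle I expect is not the base-change step, which is routine once the fibre cohomology is controlled, but the fibrewise identification of the relative twisted spinor bundles with the genuine spinor bundles on the quadric fibres — this is precisely what the spin structure on $\mathcal{E}$ and the vanishing $H^1(X,\mathbb{Z}/2\mathbb{Z})=0$ are there to secure, following \cite{BOE} — together with having at hand the exact cohomology vanishing statements for (twisted) spinor bundles on a single smooth quadric, which is where one relies on the work of Kapranov \cite{KAP2}.
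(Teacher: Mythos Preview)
The paper does not give its own proof of this lemma; it simply cites \cite{BOE}, Lemma~3.2.5. Your argument is essentially the standard one (and is in substance B\"ohning's): restrict to a fibre, use that Kapranov's collection $\Sigma(-r+1),\mathcal{O}_Q(-r+2),\dots,\mathcal{O}_Q$ (resp.\ $\Sigma^{\pm}(-r+1),\dots,\mathcal{O}_Q$) is full strongly exceptional on a smooth quadric, and then push the fibrewise vanishing down by cohomology and base change. Your identification of which vanishing corresponds to which of (i)--(iii) is correct, and the remark that any line bundle twist pulled back from $X$ is harmless (cancelling in $\mathcal{W}\otimes\mathcal{W}^{\vee}$, and factoring out via the projection formula otherwise) is well placed.

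One small slip in the last paragraph: the map adjoint to $\mathrm{id}_{\mathcal{W}}\in\mathrm{Hom}(\mathcal{O}_{\mathcal{Q}},\mathcal{W}\otimes\mathcal{W}^{\vee})$ goes in the direction $\mathcal{O}_X\to\pi_*(\mathcal{W}\otimes\mathcal{W}^{\vee})$, not the one stated in the lemma. The canonical morphism $\pi_*(\mathcal{W}\otimes\mathcal{W}^{\vee})\to\mathcal{O}_X$ is rather the pushforward of the trace (evaluation) $\mathcal{W}\otimes\mathcal{W}^{\vee}\to\mathcal{O}_{\mathcal{Q}}$ composed with $\pi_*\mathcal{O}_{\mathcal{Q}}\simeq\mathcal{O}_X$. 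Your fibrewise-rank-one argument then shows that either of these maps is an isomorphism, so the conclusion stands; just name the correct map.
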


\begin{lem}
Let $X$ and $\mathcal{E}$ be as above with all the assumptions on $X$ and $\mathcal{E}$ of Theorem 4.9 being fulfilled. Suppose the object $\mathcal{A}$ generates the category $D^b(X)$. Then the object 
\begin{eqnarray*}
\mathcal{N}=\bigoplus^r_{i=0}\pi^*\mathcal{A}\otimes \mathcal{V}_i,
\end{eqnarray*} where $\mathcal{V}_i$ are the elements of the set (3), generates $D^b(\mathcal{Q})$ for $r+1$ odd and the object
\begin{eqnarray*}
\mathcal{N}'=\bigoplus^{r+1}_{i=0}\pi^*\mathcal{A}\otimes \mathcal{V}'_i,
\end{eqnarray*} where $\mathcal{V}'_i$ are the elements of the set (4), generates $D^b(\mathcal{Q})$ for $r+1$ even.
\end{lem}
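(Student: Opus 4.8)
The plan is to argue exactly as in Lemma 4.4, replacing Orlov's semiorthogonal decomposition of a Grassmannian bundle by the Böhning decomposition of Theorem 4.9. I will describe the case $r+1$ odd; the case $r+1$ even is word for word the same with the set (3) replaced by the set (4).

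First I would record that for each element $\mathcal{V}_i$ of the set (3) the functor $\Phi_i:=\pi^*(-)\otimes\mathcal{V}_i:D^b(X)\to D^b(\mathcal{Q})$ is fully faithful with essential image the subcategory $D^b(X)\otimes\mathcal{V}_i$ occurring in Theorem 4.9. Full faithfulness follows from Lemma 4.11 together with adjunction of $\pi^*,\pi_*$ and the projection formula: for $\mathcal{M},\mathcal{M}'\in D^b(X)$ and $j\in\mathbb{Z}$,
\[ \mathrm{Hom}(\pi^*\mathcal{M}\otimes\mathcal{V}_i,\pi^*\mathcal{M}'\otimes\mathcal{V}_i[j])\simeq\mathrm{Hom}(\mathcal{M},\mathcal{M}'\otimes\mathbb{R}\pi_*(\mathcal{V}_i^{\vee}\otimes\mathcal{V}_i)[j])\simeq\mathrm{Hom}(\mathcal{M},\mathcal{M}'[j]), \]
where the last step uses that $\mathbb{R}\pi_*(\mathcal{V}_i^{\vee}\otimes\mathcal{V}_i)\simeq\mathcal{O}_X$, which is precisely Lemma 4.11 (i) together with the final assertion of that lemma that $\pi_*(\mathcal{W}\otimes\mathcal{W}^{\vee})\to\mathcal{O}_X$ is an isomorphism. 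Since $\Phi_i$ is exact and commutes with arbitrary direct sums, it carries a generator of $D^b(X)$ to a generator of its essential image $D^b(X)\otimes\mathcal{V}_i$; in particular $\pi^*\mathcal{A}\otimes\mathcal{V}_i$ generates $D^b(X)\otimes\mathcal{V}_i$ for every $i$.

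Next I would invoke the semiorthogonal decomposition of Theorem 4.9,
\[ D^b(\mathcal{Q})=\langle D^b(X)\otimes\Sigma(-r+1),\,D^b(X)\otimes\mathcal{O}_{\mathcal{Q}}(-r+2),\,\ldots,\,D^b(X)\otimes\mathcal{O}_{\mathcal{Q}}(-1),\,D^b(X)\rangle, \]
whose defining property (iii) says that the smallest thick subcategory of $D^b(\mathcal{Q})$ closed under direct sums and containing all the components equals $D^b(\mathcal{Q})$. The thick subcategory $\langle\mathcal{N}\rangle$ generated by $\mathcal{N}=\bigoplus_i\pi^*\mathcal{A}\otimes\mathcal{V}_i$ is closed under direct sums and contains each summand $\pi^*\mathcal{A}\otimes\mathcal{V}_i$, hence contains each component $D^b(X)\otimes\mathcal{V}_i$ by the previous paragraph, hence contains all of $D^b(\mathcal{Q})$. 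Thus $\mathcal{N}$ generates $D^b(\mathcal{Q})$, and the identical argument applied to the set (4) yields the statement for $\mathcal{N}'$ when $r+1$ is even.

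The argument is essentially formal once Theorem 4.9 and Lemma 4.11 are available, and it mirrors Lemma 4.4 step by step. The only point that deserves genuine care — and which I would expect to be the main (mild) obstacle — is the identification of the essential image of $\Phi_i$ with the subcategory $D^b(X)\otimes\mathcal{V}_i$ appearing in the decomposition; this is implicit in Böhning's construction, but I would make it explicit by noting that $\Phi_i$ is fully faithful (as above) and that, by definition, $D^b(X)\otimes\mathcal{V}_i$ consists exactly of the objects $\pi^*\mathcal{M}\otimes\mathcal{V}_i$ with $\mathcal{M}\in D^b(X)$, so the essential image and the component coincide. No further difficulty is anticipated.
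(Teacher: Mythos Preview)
Your proposal is correct and follows exactly the approach the paper intends: the paper omits a proof of this lemma, but it is evidently meant to be argued by direct analogy with Lemma~4.4, replacing Orlov's semiorthogonal decomposition by B\"ohning's (Theorem~4.9), and that is precisely what you do. One small slip: each time you cite ``Lemma~4.11'' for the computation $\mathbb{R}\pi_*(\mathcal{V}_i^{\vee}\otimes\mathcal{V}_i)\simeq\mathcal{O}_X$ you mean Lemma~4.10 --- Lemma~4.11 is the very statement you are proving.
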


With Lemma 4.10 and 4.11 we now obtain the following:
\begin{prop}
Let $X$ be as above, $\mathcal{E}$ an orthogonal locally free sheaf of rank $r+1$ on $X$ and $\mathcal{Q}$ a smooth quadric bundle. Suppose $H^1(X,\mathbb{Z}/2\mathbb{Z})=0$ and that $\mathcal{E}$ carries a spin structure. Suppose furthermore that $\mathcal{T}_X$ is a tilting bundle for $D^b(X)$ and that $\mathrm{Ext}^l(\mathcal{T}_X,\mathcal{T}_X\otimes \pi_*(\mathcal{V}_j\otimes \mathcal{V}^{\vee}_i))=0$ for $l\neq 0$ and $\mathcal{V}_i\prec \mathcal{V}_j$, $\mathcal{V}_i\neq \mathcal{V}_j$, where $\mathcal{V}_i,\mathcal{V}_j\in \mathcal{V}$ (resp. $\in\mathcal{V}'$). Then $\mathcal{Q}\subset \mathbb{P}(\mathcal{E})$ admits a tilting bundle.
\end{prop}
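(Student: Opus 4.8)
The strategy is to combine the semiorthogonal decomposition of Theorem 4.9 with the vanishing statements of Lemma 4.10, pulling them back from the fiber direction to $\mathcal{Q}$ via adjunction and the projection formula, exactly in the spirit of the proof of Theorem 4.6. We claim that the object
\begin{eqnarray*}
\mathcal{T}=\bigoplus^r_{i=0}\pi^*\mathcal{T}_X\otimes \mathcal{V}_i
\end{eqnarray*}
(where $\mathcal{V}_i$ runs through the ordered set $\mathcal{V}$ of (3) when $r+1$ is odd, and correspondingly $\mathcal{T}=\bigoplus^{r+1}_{i=0}\pi^*\mathcal{T}_X\otimes \mathcal{V}'_i$ over the set $\mathcal{V}'$ of (4) when $r+1$ is even) is a tilting bundle on $\mathcal{Q}$. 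It is visibly locally free, so it remains to verify the Ext-vanishing and the generation property.

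\textbf{Generation.} Since $\mathcal{T}_X$ is a tilting bundle for $D^b(X)$ it generates $D^b(X)$; applying Lemma 4.11 with $\mathcal{A}=\mathcal{T}_X$ immediately gives that $\mathcal{T}$ generates $D^b(\mathcal{Q})$. (One checks that $D(\mathrm{Qcoh}(\mathcal{Q}))$ is compactly generated with compacts $D^b(\mathcal{Q})$ since $\mathcal{Q}$ is smooth projective integral, so generating $D^b(\mathcal{Q})$ suffices, as recalled in Remark after Definition 2.1.)

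\textbf{Ext-vanishing.} It suffices to show $\mathrm{Hom}(\pi^*\mathcal{T}_X\otimes \mathcal{V}_i,\pi^*\mathcal{T}_X\otimes \mathcal{V}_j[l])=0$ for all $l\neq 0$ and all $i,j$. By adjunction of $\pi^*,\pi_*$ and the projection formula this group is isomorphic to $\mathrm{Hom}(\mathcal{T}_X,\mathcal{T}_X\otimes \mathbb{R}\pi_*(\mathcal{V}_i^{\vee}\otimes \mathcal{V}_j)[l])$. Now distinguish the three cases of Lemma 4.10: if $i=j$ (the case $\mathcal{W}=\mathcal{V}_i$), then by (i) and the final assertion of Lemma 4.10 we have $\mathbb{R}\pi_*(\mathcal{V}_i^{\vee}\otimes \mathcal{V}_i)\simeq \mathcal{O}_X$, so the group is $\mathrm{Ext}^l(\mathcal{T}_X,\mathcal{T}_X)$, which vanishes for $l\neq 0$ because $\mathcal{T}_X$ is tilting. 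If $\mathcal{V}_i\prec\mathcal{V}_j$ with $i\neq j$, then by (iii) of Lemma 4.10 the complex $\mathbb{R}\pi_*(\mathcal{V}_i^{\vee}\otimes \mathcal{V}_j)$ is concentrated in degree $0$, equal to $\pi_*(\mathcal{V}_j\otimes \mathcal{V}_i^{\vee})$, and the group becomes $\mathrm{Ext}^l(\mathcal{T}_X,\mathcal{T}_X\otimes \pi_*(\mathcal{V}_j\otimes \mathcal{V}_i^{\vee}))$, which vanishes for $l\neq 0$ by the hypothesis of the proposition. Finally, if $\mathcal{V}_j\prec\mathcal{V}_i$ with $i\neq j$, then by (ii) of Lemma 4.10 (applied with the roles of $\mathcal{V}_1,\mathcal{V}_2$ being $\mathcal{V}_j,\mathcal{V}_i$) we get $\mathbb{R}\pi_*(\mathcal{V}_i^{\vee}\otimes \mathcal{V}_j)=0$, so the Hom-group vanishes for every $l$. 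This exhausts all pairs, so $\mathrm{Ext}^l(\mathcal{T},\mathcal{T})=0$ for $l\neq 0$. Together with generation, $\mathcal{T}$ is a tilting bundle; since $\mathcal{Q}$ is smooth over $\mathbb{C}$, Theorem 2.3 (iii) gives that $\mathrm{End}(\mathcal{T})$ has finite global dimension.

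\textbf{Main obstacle.} The only non-formal input is precisely the extra hypothesis imposed in the statement, namely the vanishing of $\mathrm{Ext}^l(\mathcal{T}_X,\mathcal{T}_X\otimes \pi_*(\mathcal{V}_j\otimes \mathcal{V}_i^{\vee}))$ for $l\neq 0$; unlike in Theorem 4.6, there is no twisting trick available here (the sheaves $\pi_*(\mathcal{V}_j\otimes\mathcal{V}_i^{\vee})$ are governed by the spinor geometry and cannot be made "small" by tensoring $\mathcal{E}$ with a negative power of an ample line bundle and passing to an isomorphic quadric bundle, because the quadratic form must be rescaled accordingly). Hence the result is stated conditionally on this hypothesis, and the real work — carried out in the subsequent Theorem 4.16 — is to identify geometric situations (e.g. when $X$ itself is a product of suitable homogeneous pieces, or $\pi_*(\mathcal{V}_j\otimes\mathcal{V}_i^{\vee})$ splits into line bundles) where this vanishing can be guaranteed.
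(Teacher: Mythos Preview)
Your proof is correct and follows essentially the same route as the paper: the same candidate $\mathcal{T}=\bigoplus_i\pi^*\mathcal{T}_X\otimes\mathcal{V}_i$, Ext-vanishing via adjunction plus projection formula reduced to the three cases of Lemma 4.10, generation from Lemma 4.11, and finite global dimension from Theorem 2.3. Your case split is in fact more explicit than the paper's terse ``Lemma 4.10 together with the assumption yields\ldots''. One small remark on your closing commentary: the subsequent Theorem 4.16 does in fact rely on a twisting trick (the ample $\mathcal{M}$ of Theorem 4.15, applied on the base $Z$ rather than to $\mathcal{E}$), so your description of how the obstacle is later removed is not quite accurate---but this does not affect the proof of the proposition itself.
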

\begin{proof}
We prove that $\mathcal{T}=\bigoplus_i\pi^*\mathcal{T}_X\otimes \mathcal{V}_i$, with $\mathcal{V}_i$ being elements of the set (3), is a tilting bundle in the odd case and $\mathcal{T}=\bigoplus_i\pi^*\mathcal{T}_X\otimes \mathcal{V}'_i$, with $\mathcal{V}'_i$ being elements of the set (4), in the even case. We give the proof only for the odd case and note that the proof for the even case is completely the same. We start to prove the vanishing of $\mathrm{Ext}^l(\pi^*\mathcal{T}_X\otimes \mathcal{V}_i,\pi^*\mathcal{T}_X\otimes \mathcal{V}_j)$ for $l>0$. By adjunction of $\pi^*$ and $\pi_*$ and the projection formula we obtain
\begin{eqnarray*}
\mathrm{Hom}(\pi^*\mathcal{T}_X\otimes \mathcal{V}_i,\pi^*\mathcal{T}_X\otimes \mathcal{V}_j[l])\simeq \mathrm{Hom}(\mathcal{T}_X,\mathcal{T}_X\otimes \mathbb{R}\pi_*(\mathcal{V}_i^{\vee}\otimes \mathcal{V}_j)[l]). 
\end{eqnarray*} Lemma 4.10 together with the assumption yields
\begin{center}
$\mathrm{Ext}^l(\pi^*\mathcal{T}_X\otimes \mathcal{V}_i,\pi^*\mathcal{T}_X\otimes \mathcal{V}_j)=0$ for $l>0$.
\end{center} Note that for $i=j$ we have with Lemma 4.10 
\begin{center}
$\mathrm{Hom}(\mathcal{T}_X,\mathcal{T}_X\otimes \mathbb{R}\pi_*(\mathcal{V}_i^{\vee}\otimes \mathcal{V}_i)[l])\simeq \mathrm{Ext}^l(\mathcal{T}_X,\mathcal{T}_X)=0$ for $l\neq 0$, 
\end{center} since $\mathcal{T}_X$ is a tilting bundle on $X$ by assumption. The generating property of $\mathcal{T}=\bigoplus_i\pi^*\mathcal{T}_X\otimes \mathcal{V}_i$ follows from Lemma 4.11, since $\mathcal{T}_X$ generates $D^b(X)$ by assumption. Since $\mathcal{Q}$ is by assumption smooth over $\mathbb{C}$, Theorem 2.3 implies that $\mathrm{End}(\mathcal{T})$ has finite global dimension. This completes the proof.
\end{proof}

We see that an obstruction for $\mathcal{T}=\bigoplus_i\pi^*\mathcal{T}_X\otimes \mathcal{V}_i$ to be a tilting object on the quadric bundle is that one has to verify $\mathrm{Ext}^l(\mathcal{T}_X,\mathcal{T}_X\otimes \pi_*(\mathcal{V}_j\otimes \mathcal{V}^{\vee}_i))=0$ for $l\neq 0$. For the moment, we continue by considering a more general situation as above (Proposition 4.12), namely considering flat fibrations. This was done by Costa, Di Rocco and Mir\'o--Roig \cite{CRMR} for fibrations with typical fiber $F$ and by Samokhin \cite{SAM} for arbitrary fibrations. Note that the results in loc.cit. concerned full (strongly) exceptional collections often consisting of invertible sheaves. Below we want to generalize the result of Costa, Di Rocco and Mir\'o--Roig \cite{CRMR}, Theorem 2.8 by considering arbitrary fibrations and allowing the base scheme to admit a tilting bundle whose direct summands are not necessarily invertible sheaves. This generalization also shows that the cohomologically assumption of Proposition 4.12 can be manged to be fulfilled. We begin with a lemma.

\begin{lem}
Let $\pi:X\rightarrow Z$ be a flat proper morphism between two smooth projective schemes over an algebraically closed field of characteristic zero. Let $\mathcal{E}_1,...,\mathcal{E}_n$ be a set of locally free sheaves in $D^b(X)$ and suppose that for any point $z\in Z$ the restriction $\mathcal{E}^z_i=\mathcal{E}_i\otimes \mathcal{O}_{X_z}$ to the fiber $X_z$ is a full strongly exceptional collection for $D^b(X_z)$. Then the following holds:
\[\mathbb{R}^s\pi_*(\mathcal{E}_q\otimes\mathcal{E}^{\vee}_p)=\begin{cases}
0& \text{for } s>0\\
0& \text{for s = 0 and } q<p\\
\pi_*(\mathcal{E}_q\otimes\mathcal{E}^{\vee}_p) &\text{for s = 0 and } q\geq p
\end{cases}\]
\end{lem}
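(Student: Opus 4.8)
The plan is to compute the higher direct images fibrewise by cohomology and base change, using the hypothesis on the fibres to force the relevant fibre cohomology to vanish, and then to promote that pointwise vanishing to vanishing of the sheaves $\mathbb{R}^s\pi_*(\mathcal{E}_q\otimes\mathcal{E}^{\vee}_p)$ themselves. The last line of the asserted formula, $s=0$ and $q\geq p$, is merely the tautology $\mathbb{R}^0\pi_*=\pi_*$, so only the two vanishing statements need an argument.

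First I would observe that $\mathcal{E}_q\otimes\mathcal{E}^{\vee}_p\simeq\mathcal{H}om(\mathcal{E}_p,\mathcal{E}_q)$ is locally free on $X$, hence flat over $Z$; and since restriction to a fibre $X_z$ and formation of the dual both commute with tensor products of locally free sheaves, for every $z\in Z$ one has
\[
(\mathcal{E}_q\otimes\mathcal{E}^{\vee}_p)\otimes\mathcal{O}_{X_z}\simeq\mathcal{E}^z_q\otimes(\mathcal{E}^z_p)^{\vee},\qquad\text{hence}\qquad H^s\bigl(X_z,(\mathcal{E}_q\otimes\mathcal{E}^{\vee}_p)\otimes\mathcal{O}_{X_z}\bigr)\simeq\mathrm{Ext}^s_{X_z}(\mathcal{E}^z_p,\mathcal{E}^z_q),
\]
the last isomorphism because $\mathcal{E}^z_p$ is locally free on $X_z$.

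Next I would invoke the hypothesis that $\mathcal{E}^z_1,\dots,\mathcal{E}^z_n$ is a full strongly exceptional collection on $X_z$ for every $z$. Strong exceptionality gives $\mathrm{Ext}^s_{X_z}(\mathcal{E}^z_p,\mathcal{E}^z_q)=\mathrm{Hom}(\mathcal{E}^z_p,\mathcal{E}^z_q[s])=0$ for all $s\neq 0$ and all $p,q$, while exceptionality gives $\mathrm{Hom}(\mathcal{E}^z_p,\mathcal{E}^z_q)=0$ whenever $p>q$, i.e.\ whenever $q<p$. Therefore the function $z\mapsto\dim_{k(z)}H^s\bigl(X_z,(\mathcal{E}_q\otimes\mathcal{E}^{\vee}_p)\otimes\mathcal{O}_{X_z}\bigr)$ is identically zero on $Z$ in the range $s>0$ (for all $p,q$) and in the range $s=0$, $q<p$.

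Since $Z$ is smooth, hence reduced, I would finish by applying Grauert's base-change theorem on each connected component of $Z$: a coherent sheaf flat over a reduced, connected base whose $s$-th fibre cohomology has constant dimension has $\mathbb{R}^s\pi_*$ locally free of that rank; here the rank is $0$, so $\mathbb{R}^s\pi_*(\mathcal{E}_q\otimes\mathcal{E}^{\vee}_p)=0$ in both ranges, which is exactly the claim. The argument is essentially formal once the fibrewise identification with $\mathrm{Ext}$ groups is in place; the only point to be careful about is that cohomology and base change is being applied to the locally free — hence flat — sheaf $\mathcal{E}_q\otimes\mathcal{E}^{\vee}_p$ over the reduced base $Z$, so that the constancy of the fibre dimensions genuinely forces the direct image to vanish. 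Alternatively one can sidestep Grauert entirely and argue by descending induction on $s$, starting from Grothendieck's vanishing $\mathbb{R}^s\pi_*=0$ for $s$ exceeding the relative dimension of $\pi$ and using the standard base-change exchange criteria together with Nakayama's lemma to push the vanishing down one degree at a time.
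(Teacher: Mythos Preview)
Your argument is correct and follows essentially the same route as the paper: compute the fibres of $\mathbb{R}^s\pi_*(\mathcal{E}_q\otimes\mathcal{E}_p^{\vee})$ via base change, identify them with $\mathrm{Ext}^s_{X_z}(\mathcal{E}^z_p,\mathcal{E}^z_q)$, and use the strongly exceptional hypothesis to force vanishing. Your write-up is in fact more careful than the paper's sketch, which invokes ``flat base change'' for the inclusion of a point and then defers to \cite{CRMR} and \cite{SAM}; your explicit appeal to Grauert's theorem over the reduced base (or the alternative descending induction with Nakayama) is exactly the content hidden behind that deferral.
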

\begin{proof}
Consider the diagramm
\begin{displaymath}
\begin{xy}
  \xymatrix{
      X_z\ar[d]_{\pi_z}\ar[r]^{\tilde{i}_z}    &   X\ar[d]^{\pi}                   \\
      z\ar[r]^{i_z}             &   Z             
  }
\end{xy}
\end{displaymath} and note that $\pi$ is and flat and proper. Therefore flat base change holds and we conclude
\begin{center}
$i_z^* \mathbb{R}^{\bullet}\pi_{*}(\mathcal{E}_q\otimes \mathcal{E}^{\vee}_p)=\mathbb{H}^{\bullet}(X_z,\mathcal{E}_q\otimes \mathcal{E}^{\vee}_p\otimes \mathcal{O}_{X_z})$.
\end{center}
The rest of the proof follows exactly the lines of the proof of the claim in \cite{CRMR}, p.10006 (See also \cite{SAM}, p.5 and p.6.). 
\end{proof}
To prove Theorem 4.15 below we need a further observation. It is the following fact, essentially proved in \cite{SAM}, Theorem 3.1.
\begin{lem}
Let $\pi:X\rightarrow Z$ be as above and $\mathcal{E}_i$ the sheaves from Lemma 4.13. Suppose that $D^b(Z)$ is generated by some object $\mathcal{A}$, then $D^b(X)$ is generated by the object $\mathcal{R}=\bigoplus^n_{i=1}\pi^*(\mathcal{A})\otimes \mathcal{E}_i$.
\end{lem}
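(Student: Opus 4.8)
The plan is to adapt the standard argument for generation under semiorthogonal-type decompositions, exactly as in Samokhin's Theorem 3.1 and the proof of Lemma 4.4 above. First I would recall that, since $\pi$ is flat and proper and the fiberwise restrictions $\mathcal{E}_i^z$ form a \emph{full} strongly exceptional collection on each $X_z$, the relative analogue of Beilinson--Orlov gives a semiorthogonal decomposition
\begin{eqnarray*}
D^b(X)=\langle \pi^*D^b(Z)\otimes \mathcal{E}_1,\ \pi^*D^b(Z)\otimes \mathcal{E}_2,\ \ldots,\ \pi^*D^b(Z)\otimes \mathcal{E}_n\rangle,
\end{eqnarray*}
where $\pi^*D^b(Z)\otimes \mathcal{E}_i$ denotes the full triangulated subcategory on objects $\pi^*\mathcal{M}\otimes \mathcal{E}_i$ with $\mathcal{M}\in D^b(Z)$; the semiorthogonality of the pieces is precisely Lemma 4.13, and fullness is the content of the relative Orlov/Samokhin statement. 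Moreover, each functor $\pi^*(-)\otimes \mathcal{E}_i:D^b(Z)\to D^b(X)$ is fully faithful onto its image (again a consequence of Lemma 4.13 applied with $p=q=i$, which gives $\pi_*(\mathcal{E}_i\otimes\mathcal{E}_i^\vee)=\mathcal{O}_Z$ and vanishing of higher direct images, so $\mathbb{R}\mathrm{Hom}_X(\pi^*\mathcal{M}\otimes\mathcal{E}_i,\pi^*\mathcal{M}'\otimes\mathcal{E}_i)\simeq\mathbb{R}\mathrm{Hom}_Z(\mathcal{M},\mathcal{M}')$ by adjunction and the projection formula).

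Next I would use the hypothesis that $\mathcal{A}$ generates $D^b(Z)$, i.e. the smallest thick subcategory of $D^b(Z)$ closed under direct sums containing $\mathcal{A}$ is all of $D^b(Z)$. Because $\pi^*(-)\otimes\mathcal{E}_i$ is an equivalence of $D^b(Z)$ onto the component $\pi^*D^b(Z)\otimes\mathcal{E}_i$, and equivalences carry generators to generators, the object $\pi^*\mathcal{A}\otimes\mathcal{E}_i$ generates the subcategory $\pi^*D^b(Z)\otimes\mathcal{E}_i$ for each $i$. Finally, a standard dévissage along the semiorthogonal decomposition finishes the argument: the thick subcategory $\langle\mathcal{R}\rangle$ generated by $\mathcal{R}=\bigoplus_{i=1}^n\pi^*\mathcal{A}\otimes\mathcal{E}_i$ contains each $\pi^*\mathcal{A}\otimes\mathcal{E}_i$ (as a direct summand), hence contains all of $\pi^*D^b(Z)\otimes\mathcal{E}_i$, and since any object of $D^b(X)$ sits in a finite filtration with subquotients in the various $\pi^*D^b(Z)\otimes\mathcal{E}_i$, repeated use of the octahedral axiom places it in $\langle\mathcal{R}\rangle$. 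Thus $\mathcal{R}$ generates $D^b(X)$.

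I expect the only genuine subtlety to be the justification of the relative semiorthogonal decomposition and, in particular, of \emph{fullness} of the collection $\pi^*D^b(Z)\otimes\mathcal{E}_1,\ldots,\pi^*D^b(Z)\otimes\mathcal{E}_n$; the semiorthogonality is routine from Lemma 4.13, but fullness is where the hypothesis that each $\mathcal{E}_i^z$ form a \emph{full} collection on $X_z$ really gets used, and is exactly the point treated in \cite{SAM}, Theorem 3.1 (and, in the special case of Grassmannian bundles, in Orlov's work \cite{DO} and in Lemma 4.4 above). Since the statement allows us to invoke results stated earlier, I would simply cite Lemma 4.13 together with the cited work of Samokhin for the decomposition and fullness, and then carry out the short dévissage above; the remaining bookkeeping with direct sums (needed so that the thick subcategories in question are closed under arbitrary direct sums, matching the generation conventions fixed in Section 2) is formal.
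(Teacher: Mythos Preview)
Your proposal is correct and follows essentially the same route as the paper: invoke Samokhin's Theorem 3.1 (together with Lemma 4.13) to obtain the semiorthogonal decomposition $D^b(X)=\langle \pi^*D^b(Z)\otimes\mathcal{E}_1,\ldots,\pi^*D^b(Z)\otimes\mathcal{E}_n\rangle$ with each $\pi^*(-)\otimes\mathcal{E}_i$ fully faithful, transport the generator $\mathcal{A}$ along these equivalences, and conclude by d\'evissage. The paper's proof is just a terser version of exactly this argument.
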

\begin{proof}
Samokhin \cite{SAM}, Theorem 3.1 proved that the functor $\pi^*(-)\otimes \mathcal{E}_i:D^b(Z)\rightarrow D^b(X)$ is fully faithful and that $D^b(X)=\langle \pi^*D^b(Z)\otimes \mathcal{E}_1,...,\pi^*D^b(Z)\otimes \mathcal{E}_n\rangle$ is a semiorthogonal decomposition. The full subcategories $\pi^*D^b(Z)\otimes\mathcal{E}_i$ consist of objects of the form $\pi^*\mathcal{M}\otimes \mathcal{E}_i$, where $\mathcal{M}\in D^b(Z)$. Therefore, the functor $\pi^*(-)\otimes \mathcal{E}_i$ from above induces an equivalence between $D^b(Z)$ and $\pi^*D^b(Z)\otimes\mathcal{E}_i$. Since $\mathcal{A}$ generates $D^b(Z)$, the object $\pi^*(\mathcal{A})\otimes \mathcal{E}_i$ generates $\pi^*D^b(Z)\otimes \mathcal{E}_i$ and hence $\mathcal{R}=\bigoplus^n_{i=1}\pi^*(\mathcal{A})\otimes\mathcal{E}_i$ generates $D^b(X)$.
\end{proof}

\begin{thm}
Let $\pi:X\rightarrow Z$ and $\mathcal{E}_i$ be as in Lemma 4.13 and suppose that $D^b(Z)$ admits a tilting bundle $\mathcal{T}$. Then there exists an ample invertible sheaf $\mathcal{M}$ on $Z$ such that $\mathcal{R}=\bigoplus^n_{i=1}\pi^*(\mathcal{T}\otimes \mathcal{M}^{\otimes i})\otimes \mathcal{E}_i$ is a tilting bundle for $D^b(X)$.
\end{thm}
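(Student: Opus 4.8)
The strategy is to reduce the statement to the already-established Lemmas 4.13 and 4.14, exactly as in the proof of Theorem 4.6: first establish the self-orthogonality condition (i), and then invoke Lemma 4.14 for the generating condition (ii). The object in question is $\mathcal{R}=\bigoplus_{i=1}^n \pi^*(\mathcal{T}\otimes\mathcal{M}^{\otimes i})\otimes\mathcal{E}_i$, and the key point is that the ample twist $\mathcal{M}^{\otimes i}$ is what allows us to force the relevant higher Ext-groups on $Z$ to vanish, something that is not automatic because the direct summands of $\mathcal{T}$ need not be invertible.

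First I would compute, for two indices $p$ and $q$, the group $\mathrm{Hom}(\pi^*(\mathcal{T}\otimes\mathcal{M}^{\otimes p})\otimes\mathcal{E}_p,\ \pi^*(\mathcal{T}\otimes\mathcal{M}^{\otimes q})\otimes\mathcal{E}_q[l])$. Adjunction of $\pi^*$ and $\pi_*$ together with the projection formula rewrites this as $\mathrm{Hom}(\mathcal{T}\otimes\mathcal{M}^{\otimes p},\ \mathcal{T}\otimes\mathcal{M}^{\otimes q}\otimes\mathbb{R}\pi_*(\mathcal{E}_q\otimes\mathcal{E}_p^{\vee})[l])$. By Lemma 4.13, $\mathbb{R}\pi_*(\mathcal{E}_q\otimes\mathcal{E}_p^{\vee})$ is concentrated in degree $0$; it vanishes entirely when $q<p$, and for $q\geq p$ it is the locally free sheaf $\pi_*(\mathcal{E}_q\otimes\mathcal{E}_p^{\vee})$ on $Z$. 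So for $q<p$ the Hom-group vanishes for all $l$, and for $q\geq p$ we are reduced to showing
\[
\mathrm{Ext}^l_Z\bigl(\mathcal{T},\ \mathcal{T}\otimes\mathcal{M}^{\otimes(q-p)}\otimes\pi_*(\mathcal{E}_q\otimes\mathcal{E}_p^{\vee})\bigr)=0\quad\text{for }l>0.
\]
When $q=p$ this is $\mathrm{Ext}^l_Z(\mathcal{T},\mathcal{T}\otimes\pi_*(\mathcal{E}_p\otimes\mathcal{E}_p^{\vee}))$; here one uses that $\mathcal{E}_p^z$ is exceptional on the fiber, so $\pi_*(\mathcal{E}_p\otimes\mathcal{E}_p^{\vee})\cong\mathcal{O}_Z$ (the canonical map being an isomorphism, as the fiberwise $\mathrm{Hom}$ is one-dimensional), and the vanishing follows because $\mathcal{T}$ is a tilting bundle on $Z$. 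For $q>p$ the twist by the positive power $\mathcal{M}^{\otimes(q-p)}$ enters: since $Z$ is projective and there are only finitely many pairs $(p,q)$ with $q>p$, one can choose $\mathcal{M}$ ample and $\mathcal{M}$ sufficiently positive — replacing $\mathcal{M}$ by a high tensor power if necessary — so that for every such pair $\mathrm{Ext}^l_Z(\mathcal{T},\mathcal{T}\otimes\mathcal{M}^{\otimes(q-p)}\otimes\pi_*(\mathcal{E}_q\otimes\mathcal{E}_p^{\vee}))=0$ for all $l>0$, by Serre vanishing applied to the coherent sheaves $\mathcal{H}om(\mathcal{T},\mathcal{T}\otimes\pi_*(\mathcal{E}_q\otimes\mathcal{E}_p^{\vee}))$ and the fact that $\mathrm{Ext}^l_Z(\mathcal{T},-)$ is computed by a finite number of cohomology groups. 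This fixes the choice of $\mathcal{M}$ and establishes condition (i) of the definition of a tilting object.

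For condition (ii), I would apply Lemma 4.14: since $\mathcal{T}$ is a tilting bundle on $Z$ it generates $D^b(Z)$, and a fortiori so does $\mathcal{T}\otimes\mathcal{M}^{\otimes i}$ for each $i$ by Proposition 2.5 (tensoring a tilting object by an invertible sheaf); but in fact Lemma 4.14 is stated for a single generating object, so one should instead observe that the functors $\pi^*(-)\otimes\mathcal{E}_i$ furnish the semiorthogonal decomposition $D^b(X)=\langle\pi^*D^b(Z)\otimes\mathcal{E}_1,\dots,\pi^*D^b(Z)\otimes\mathcal{E}_n\rangle$, and that $\pi^*(\mathcal{T}\otimes\mathcal{M}^{\otimes i})\otimes\mathcal{E}_i$ generates the $i$-th piece because $\mathcal{T}\otimes\mathcal{M}^{\otimes i}$ generates $D^b(Z)$; hence $\mathcal{R}$ generates $D^b(X)$. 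Finally, condition (iii) and the commutation with direct sums hold since $\mathcal{R}$ is a perfect complex (indeed a locally free sheaf) on a smooth projective scheme, and smoothness of $X$ over the base field together with Theorem 2.3(iii) shows that $\mathrm{End}(\mathcal{R})$ has finite global dimension. The main obstacle is the second half of step two: verifying that a uniform choice of ample $\mathcal{M}$ simultaneously kills all the relevant higher Ext-groups for the finitely many pairs $q>p$ — this is where the failure of the summands of $\mathcal{T}$ to be invertible matters, and where Serre vanishing (rather than Bott-type vanishing) has to be invoked carefully, keeping track that $\pi_*(\mathcal{E}_q\otimes\mathcal{E}_p^{\vee})$ is genuinely a coherent (in fact locally free) sheaf on $Z$ so that the argument applies.
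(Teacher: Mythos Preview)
Your proposal is correct and follows essentially the same route as the paper's own proof: adjunction/projection formula to reduce to Ext-groups on $Z$, Lemma 4.13 to dispose of the cases $q<p$ and to reduce $q\geq p$ to a degree-zero pushforward, the identification $\pi_*(\mathcal{E}_p\otimes\mathcal{E}_p^{\vee})\cong\mathcal{O}_Z$ for $q=p$, Serre vanishing for the finitely many pairs with $q>p$, and Lemma 4.14 together with Proposition 2.5 for generation. The only cosmetic difference is that the paper phrases the reduction via the Leray spectral sequence and sheaf cohomology rather than directly via derived adjunction, and your treatment of the generation step is in fact slightly more careful than the paper's in noting that Lemma 4.14 is applied with the twisted generator $\mathcal{T}\otimes\mathcal{M}^{\otimes i}$ in each component of the semiorthogonal decomposition.
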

\begin{proof}
We will show that there is an ample invertible sheaf $\mathcal{M}$ on $Z$ such that $\mathcal{R}=\bigoplus^n_{i=1}\pi^*(\mathcal{T}\otimes \mathcal{M}^{\otimes i})\otimes \mathcal{E}_i$ is a tilting object for $D^b(X)$. For the vanishing of Ext we therefore have to find the ample invertible sheaf $\mathcal{M}$ such that
\begin{center}
$\mathrm{Ext}^l(\pi^*(\mathcal{T}\otimes\mathcal{M}^{\otimes i})\otimes\mathcal{E}_i,\pi^*(\mathcal{T}\otimes\mathcal{M}^{\otimes j})\otimes \mathcal{E}_j)=0$, for $l>0$.
\end{center}
But this is equivalent to
\begin{center}
$H^l(X,\pi^*(\mathcal{T}\otimes\mathcal{T}^{\vee}\otimes\mathcal{M}^{\otimes (j-i)})\otimes\mathcal{E}_j\otimes\mathcal{E}^{\vee}_i)=0$, for $l>0$.
\end{center}
Applying the Leray spectral sequence for the morphism $\pi$ (see \cite{HUY}, p.74), one gets
\begin{eqnarray*}
H^r(Z,\mathbb{R}^s\pi_*(\pi^*(\mathcal{T}\otimes\mathcal{T}^{\vee}\otimes\mathcal{M}^{\otimes (j-i)})\otimes\mathcal{E}_j\otimes\mathcal{E}^{\vee}_i))\Longrightarrow\\
H^{r+s}(X,\pi^*(\mathcal{T}\otimes\mathcal{T}^{\vee}\otimes\mathcal{M}^{\otimes (j-i)})\otimes\mathcal{E}_j\otimes\mathcal{E}^{\vee}_i).
\end{eqnarray*}
With the projection formula we find
\begin{center}
$\mathbb{R}^s\pi_*(\pi^*(\mathcal{T}\otimes\mathcal{T}^{\vee}\otimes\mathcal{M}^{\otimes (j-i)})\otimes\mathcal{E}_j\otimes\mathcal{E}^{\vee}_i)\simeq \mathcal{T}\otimes\mathcal{T}^{\vee}\otimes\mathcal{M}^{\otimes (j-i)}\otimes\mathbb{R}^s\pi_*(\mathcal{E}_j\otimes\mathcal{E}^{\vee}_i)$.
\end{center}
Now from Lemma 4.13 we know that $\mathbb{R}^s\pi_*(\mathcal{E}_j\otimes\mathcal{E}^{\vee}_i)$ is non-vanishing only for $s=0$ and $j\geq i$ and that in this case one has $\mathbb{R}^s\pi_*(\mathcal{E}_j\otimes\mathcal{E}^{\vee}_i)\simeq \pi_*(\mathcal{E}_j\otimes\mathcal{E}^{\vee}_i)$. Thus for $j<i$ we have $\mathbb{R}^s\pi_*(\mathcal{E}_j\otimes\mathcal{E}^{\vee}_i)=0$ and therefore 
\begin{center}
$H^r(Z,\mathcal{T}\otimes\mathcal{T}^{\vee}\otimes\mathcal{M}^{\otimes (j-i)}\otimes\mathbb{R}^s\pi_*(\mathcal{E}_j\otimes\mathcal{E}^{\vee}_i))=0$. 
\end{center}
Therefore we find 
\begin{center}
$H^{l}(X,\pi^*(\mathcal{T}\otimes\mathcal{T}^{\vee}\otimes\mathcal{M}^{\otimes (j-i)})\otimes\mathcal{E}_j\otimes\mathcal{E}^{\vee}_i)=0$, 
\end{center}
for $l>0$ by above spectral sequence. It remains the case $j\geq i$. For $j=i$ we have $\mathbb{R}^s\pi_*(\mathcal{E}_i\otimes\mathcal{E}^{\vee}_i)\simeq \pi_*(\mathcal{E}_i\otimes\mathcal{E}^{\vee}_i)\simeq \mathcal{O}_Z$ (see \cite{SAM}, p.5 right after (3.10)). From this we get 
\begin{eqnarray*}
H^r(Z,\mathcal{T}\otimes\mathcal{T}^{\vee}\otimes\mathcal{M}^{\otimes (i-i)}\otimes\mathbb{R}^s\pi_*(\mathcal{E}_i\otimes\mathcal{E}^{\vee}_i))&\simeq& H^r(Z,\mathcal{T}\otimes \mathcal{T}^{\vee}\otimes \mathcal{O}_Z)\\
&\simeq&\mathrm{Ext}^r(\mathcal{T},\mathcal{T})=0, 
\end{eqnarray*}
for $r>0$, since $\mathcal{T}$ is a tilting bundle for $D^b(X)$ by assumption. Again by the above spectral sequence we conclude 
\begin{center}
$H^{l}(X,\pi^*(\mathcal{T}\otimes\mathcal{T}^{\vee}\otimes\mathcal{M}^{\otimes (i-i)})\otimes\mathcal{E}_i\otimes\mathcal{E}^{\vee}_i)=0$,
\end{center} for $l>0$. Finally, it remains the case $j>i$. For this, we again consider the above spectral sequence and see that it becomes
\begin{eqnarray*}
H^r(Z, \mathcal{T}\otimes \mathcal{T}^{\vee}\otimes \mathcal{M}^{\otimes (j-i)}\otimes \pi_*(\mathcal{E}_j\otimes\mathcal{E}^{\vee}_i))\Longrightarrow\\
H^{r}(X,\pi^*(\mathcal{T}\otimes\mathcal{T}^{\vee}\otimes\mathcal{M}^{\otimes (j-i)})\otimes\mathcal{E}_j\otimes\mathcal{E}^{\vee}_i). 
\end{eqnarray*}
Since there are only finitely many $\mathcal{E}_i$ and $Z$ is projective, we can choose an ample invertible sheaf $\mathcal{N}$ on $Z$ and an integer $m>>0$ such that for $\mathcal{M}=\mathcal{N}^{\otimes m}$ we have 
\begin{center}
$H^r(Z,\mathcal{T}\otimes\mathcal{T}^{\vee}\otimes\mathcal{M}^{\otimes (j-i)}\otimes\pi_*(\mathcal{E}_j\otimes\mathcal{E}^{\vee}_i))=0$ for $r>0$.
\end{center}
This finally yields 
\begin{center}
$H^l(X,\pi^*(\mathcal{T}\otimes\mathcal{T}^{\vee}\otimes\mathcal{M}^{\otimes (j-i)})\otimes\mathcal{E}_j\otimes\mathcal{E}^{\vee}_i)=0$ for $l>0$
\end{center} and therefore
\begin{center}
$\mathrm{Ext}^l(\pi^*(\mathcal{T}\otimes\mathcal{M}^{\otimes i})\otimes\mathcal{E}_i,\pi^*(\mathcal{T}\otimes\mathcal{M}^{\otimes j})\otimes \mathcal{E}_j)=0$ for $l>0$.
\end{center} The generating property of $\mathcal{R}=\bigoplus^n_{i=1}\pi^*(\mathcal{T}\otimes \mathcal{M}^{\otimes i})\otimes \mathcal{E}_i$ follows from Proposition 2.5 and Lemma 4.15 as $\mathcal{T}$ generates $D^b(Z)$ by assumption. Note that the global dimension of $\mathrm{End}(\mathcal{R})$ is finite, since $X$ is smooth over $k$. Theorem 2.3 completes the proof. 
\end{proof}
We immediately have the following consequence:
\begin{thm}
Let $X$, $\mathcal{E}$ and $\mathcal{Q}$ be as in Proposition 4.12. Suppose $H^1(X,\mathbb{Z}/2\mathbb{Z})=0$ and that $\mathcal{E}$ is orthogonal and carries a spin structure. Suppose furthermore that $X$ admits a tilting bundle. Then $\mathcal{Q}\subset \mathbb{P}(\mathcal{E})$ admits a tilting bundle too.
\end{thm}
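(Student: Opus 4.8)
The plan is to obtain Theorem 4.16 as a direct application of Theorem 4.15 to the structure morphism $\pi:\mathcal{Q}\rightarrow X$ of the quadric bundle, with $\mathcal{Q}$ playing the role of the ``total space'' and $X$ the role of the base $Z$ there, and with the sheaves $\mathcal{E}_i$ taken to be the twisted spinor bundles and twists of $\mathcal{O}_{\mathcal{Q}}$ occurring in B\"ohning's ordered sets (3) and (4). This also makes transparent why Theorem 4.16 improves on Proposition 4.12: the cohomological hypothesis $\mathrm{Ext}^l(\mathcal{T}_X,\mathcal{T}_X\otimes\pi_*(\mathcal{V}_j\otimes\mathcal{V}_i^{\vee}))=0$ required there is now absorbed into the freedom of choosing a sufficiently positive twist $\mathcal{M}$ on $X$ supplied by Theorem 4.15.

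First I would check that $\pi$ satisfies the hypotheses of Lemma 4.13. Since $\mathcal{Q}$ is closed in $\mathbb{P}(\mathcal{E})$, which is proper over $X$, the morphism $\pi$ is proper, and $\mathcal{Q}$ is projective over $\mathbb{C}$. As $q$ is non-degenerate, hence nonzero, on every fibre of $\mathbb{P}(\mathcal{E})\rightarrow X$, no fibre of $\mathbb{P}(\mathcal{E})$ is contained in $\mathcal{Q}$, so every fibre $\mathcal{Q}_z$ is an $(r-1)$-dimensional smooth quadric; since $\mathcal{Q}$ and $X$ are both smooth over $\mathbb{C}$ and the fibres of $\pi$ are equidimensional, miracle flatness shows that $\pi$ is flat. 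Hence $\pi:\mathcal{Q}\rightarrow X$ is a flat proper morphism between smooth projective $\mathbb{C}$-schemes, as required.

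Next, for $r+1$ odd I would take $\{\mathcal{E}_i\}$ to be the ordered collection $\mathcal{V}$ of (3), and for $r+1$ even the ordered collection $\mathcal{V}'$ of (4); the hypotheses $H^1(X,\mathbb{Z}/2\mathbb{Z})=0$ and $\mathcal{E}$ orthogonal with a spin structure guarantee, exactly as in Theorem 4.9 and its proof, that the twisted spinor bundles $\Sigma(-r+1)$, resp. $\Sigma^{\pm}(-r+1)$, exist as honest locally free sheaves on $\mathcal{Q}$. The one genuinely content-bearing point is the essential hypothesis of Lemma 4.13: for every $z\in X$ the restriction of $\{\mathcal{E}_i\}$ to the fibre $\mathcal{Q}_z$ must be a full strongly exceptional collection for $D^b(\mathcal{Q}_z)$. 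But $\mathcal{Q}_z$ is a smooth quadric, on restriction $\mathcal{O}_{\mathcal{Q}}(j)$ becomes $\mathcal{O}_{\mathcal{Q}_z}(j)$ and the twisted spinor bundles become twists of the spinor bundle(s) of $\mathcal{Q}_z$; up to a single uniform twist by an invertible sheaf this is precisely Kapranov's full strongly exceptional collection on a smooth quadric (\cite{KAP}, \cite{KAP2}), and twisting every member of an exceptional collection by a fixed invertible sheaf preserves the property of being a full strongly exceptional collection since $\mathrm{Hom}(\mathcal{E}_i\otimes\mathcal{L},\mathcal{E}_j\otimes\mathcal{L}[l])\cong\mathrm{Hom}(\mathcal{E}_i,\mathcal{E}_j[l])$. (Alternatively, this fibrewise statement is already built into the construction underlying Theorem 4.9.) Thus Lemma 4.13, and hence Lemma 4.14, applies.

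Finally, since $X$ admits a tilting bundle $\mathcal{T}$ by assumption, Theorem 4.15 produces an ample invertible sheaf $\mathcal{M}$ on $X$ such that $\mathcal{R}=\bigoplus_i\pi^*(\mathcal{T}\otimes\mathcal{M}^{\otimes i})\otimes\mathcal{E}_i$ is a tilting bundle for $D^b(\mathcal{Q})$; and because $\mathcal{Q}$ is smooth over $\mathbb{C}$, Theorem 2.3 shows in addition that $\mathrm{End}(\mathcal{R})$ has finite global dimension. This proves Theorem 4.16. The main obstacle — though a mild one, since the bulk of the work has been packaged into Theorem 4.15 — is the fibrewise identification of B\"ohning's relative sheaves with Kapranov's exceptional collection on each smooth quadric fibre; the remainder of the argument is a direct invocation of Theorem 4.15 together with the elementary verifications that $\pi$ is flat and proper and that the spinor bundles are globally defined.
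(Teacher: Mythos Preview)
Your proposal is correct and follows essentially the same route as the paper: both arguments reduce Theorem 4.16 to a direct application of Theorem 4.15 with the sheaves $\mathcal{E}_i$ taken from the ordered sets (3) and (4). The only cosmetic difference is that the paper verifies the input to Theorem 4.15 by invoking Lemma 4.10 (the relative direct-image vanishing, which is precisely the conclusion of Lemma 4.13), whereas you verify the literal fibrewise hypothesis of Lemma 4.13 by identifying the restrictions with Kapranov's full strongly exceptional collection on a smooth quadric; these are two sides of the same coin, and your version is if anything more faithful to the stated hypotheses of Theorem 4.15.
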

\begin{proof}
We let $\mathcal{E}_i$ be the locally free sheaves of collection (3) respectively (4) from above. From Lemma 4.10 we conclude that these $\mathcal{E}_i$ satisfy the condition of Theorem 4.15. Thus $D^b(\mathcal{Q})$ admits a tilting bundle.   
\end{proof}
\begin{rema}
The assumption $H^1(X,\mathbb{Z}/2\mathbb{Z})=0$ of the above theorem is fulfilled for instance if $X$ is supposed to be simply connected. 
\end{rema}
\begin{exam}
Let $X$ be a smooth projective quadric over $\mathbb{C}$ and $\mathcal{E}$ a finite rank orthogonal locally free sheaf carrying a spin structure. Then for any symmetric quadratic form $q\in \Gamma(X,\mathrm{Sym}^2(\mathcal{E}^{\vee}))$ which is non-degenerate on each fiber, the associated quadric bundle $\mathcal{Q}\rightarrow X$ admits a tilting bundle. 
\end{exam}
\begin{exam}
Theorem 4.15 also gives a proof for Theorem 4.6. Note that we conclude from Lemma 4.5 that the locally free sheaves $\mathcal{E}_{\lambda}=\Sigma^\lambda(\mathcal{R})$ on $\mathrm{Grass}(l,\mathcal{E})$ give rise to a full strongly exceptional collection on each fiber and therefore Theorem 4.15 implies that $\mathrm{Grass}(l,\mathcal{E})\rightarrow X$ admits a tilting object if $X$ admits a tilting bundle. Note that this result does not follow from the result of Costa, Di Rocco and Mir\'o-Roig \cite{CRMR} since the sheaves $\Sigma^\lambda(\mathcal{R})$ are not invertible.
\end{exam}

\section{Tilting objects on twisted forms of some relative flag varieties}
In this section we study twisted forms of certain relative flags and prove that for some of them their bounded derived category of coherent sheaves admit a tilting object.\\

We first consider classical homogeneous varieties over an algebraically closed field of characteristic zero. Recall that the semisimple algebraic groups over an algebraically closed field $k$ of characteristic zero are classified by Dynkin diagrams that fall into types $A$, $B$, $C$, $D$, $E$ and $F$ (see \cite{NBOU}). The classical semisimple algebraic groups are given by $\mathrm{SL}_k(n+1)$, $\mathrm{SO}_k(2n)$, $\mathrm{SO}_k(2n+1)$ and $\mathrm{Sp}_k(2n)$ and the corresponding Dynkin diagrams are $A_n$, $B_n$, $C_n$ and $D_n$ (see \cite {FU0}). Samokhin \cite{SAM} proved that for $G$ a semisimple algebraic group of classical type and $B$ a Borel subgroup the flag variety $G/B$ admits a full exceptional collection. As a consequence of the results obtained in the previews section we get that $G/B$ also admits a tilting object.  
\begin{thm}
Let $G$ be a semisimple algebraic group of classical type, $B$ a Borel subgroup and $G/B$ the flag variety of $G$. Then $D^b(G/B)$ admits a tilting object.
\end{thm}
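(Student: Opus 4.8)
The plan is to realize $G/B$ as an iterated fibration over a point whose fibers are relative flag varieties or quadric bundles, and then apply the results of Section 4. The key observation is that for a semisimple algebraic group $G$ of classical type, the full flag variety $G/B$ fibers over a partial flag variety $G/P$ with fiber $P/B$, and by choosing $P$ appropriately one can arrange $G/P$ to be a flag variety of smaller classical type together with base cases that are Grassmannians, quadrics, or projective spaces. More concretely, I would argue type by type: for $G$ of type $A_n$, the variety $G/B$ is the full flag variety $\mathrm{Flag}(1,2,\ldots,n,V)$, which is an iterated Grassmannian bundle over a point, so Corollary 4.7 (with $X=\mathrm{Spec}(k)$ and $\mathcal{T}_X=\mathcal{O}$) produces a tilting bundle directly. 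For type $C_n$ the symplectic flag variety $\mathrm{Sp}(2n)/B$ fibers over the Lagrangian Grassmannian, and the Lagrangian Grassmannian $\mathrm{Sp}(2n)/P_n$ itself sits inside the ordinary Grassmannian $\mathrm{Grass}(n,2n)$; one uses that $\mathrm{Sp}(2n)/B$ is obtained as a tower whose successive fibers are projective spaces (or quadric bundles in disguise), so the relative Kapranov results apply at each stage. For types $B_n$ and $D_n$ one uses the orthogonal flag variety, which fibers over a quadric with fiber a flag variety of type $B_{n-1}$ or $D_{n-1}$, bringing in Theorem 4.16 for the quadric-bundle steps.

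The actual mechanism at each step is an induction on the rank (or dimension) using the fibration structure. Suppose $\pi:G/B\to Y$ is a smooth fibration onto a smooth projective base $Y$ whose fiber $F$ is one of: a projective space, a Grassmannian, a smooth quadric, or more generally a flag variety of classical type of lower rank. By Kapranov's classical results (which are the absolute cases of Theorems 4.1, 4.6 and 4.16, or of Samokhin's exceptional collections cited in the excerpt), the fiber $F$ carries a full strongly exceptional collection $\mathcal{E}_1^F,\ldots,\mathcal{E}_n^F$ of homogeneous bundles. Because these bundles are homogeneous, they extend to locally free sheaves $\mathcal{E}_1,\ldots,\mathcal{E}_n$ on the total space $G/B$ whose restriction to every fiber of $\pi$ is again a full strongly exceptional collection. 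Then, provided $Y$ admits a tilting bundle, Theorem 4.15 produces a tilting bundle on $G/B$. The base of the induction is the one-point scheme, which trivially has the tilting bundle $\mathcal{O}$, and the inductive step decreases the rank of the classical group.

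The main obstacle — and the reason the statement is phrased as a consequence of Samokhin's work rather than proved from scratch — is verifying that the homogeneous bundles forming Samokhin's exceptional collection on the fiber really do glue to relative sheaves $\mathcal{E}_i$ on the total space that are fiberwise full strongly exceptional in the precise sense required by Lemma 4.13, and that the fibration $\pi$ is flat and proper between smooth projective schemes. For the classical groups this is essentially automatic because the relevant intermediate varieties $G/P$ are themselves smooth projective homogeneous and the maps $G/B\to G/P$ are smooth projective morphisms, and the bundles in question are pulled back or tautological; but one must check it in each type, in particular handling the spinor bundles appearing in the $B_n$ and $D_n$ quadric-bundle steps and the spin-structure hypothesis of Theorem 4.16. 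Once that bookkeeping is done, the proof is a short induction invoking Corollary 4.7 and Theorem 4.15 repeatedly, together with Proposition 2.6 to assemble products where $G$ is not simple. Since Samokhin \cite{SAM} has already exhibited the full exceptional collections on $G/B$ for all classical $G$, the cleanest route is in fact to observe that those collections consist of bundles satisfying the fiberwise hypotheses of the Section 4 theorems, so that the tilting object is obtained by taking the direct sum of the exceptional objects directly — strong exceptionality of the collection immediately gives the $\mathrm{Ext}$-vanishing, and fullness gives generation, hence the sum is a tilting object.
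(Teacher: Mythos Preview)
Your inductive strategy---reduce to simple factors via Proposition 2.6, handle type $A_n$ by Corollary 4.7, realize $\mathrm{Sp}(2n)/B$ as a tower of projective bundles and apply Theorem 4.1 repeatedly, and treat types $B_n$, $D_n$ as iterated quadric fibrations over a smooth quadric invoking Theorem 4.15/4.16 at each stage---is exactly the route the paper takes. The bookkeeping you flag (existence of the relative spinor bundles and the spin hypothesis at each quadric step) is dispatched in the paper by citing \cite{SAM}, p.~9, just as you anticipate.

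There is, however, a genuine error in your proposed ``cleanest route'' at the end. Samokhin's collections on $G/B$ are only shown to be full \emph{exceptional}, not full \emph{strongly} exceptional; the paper states this explicitly in the sentence preceding the theorem. Hence the direct sum of Samokhin's objects need not have vanishing higher self-Ext, and you cannot conclude it is a tilting object. This gap is precisely what Theorem 4.15 is designed to close: the ample twist $\mathcal{M}^{\otimes i}$ is there to kill the extra $\mathrm{Ext}$-groups that an exceptional (but not strongly exceptional) collection may carry. So the shortcut does not work, and one really must run the parabolic induction with Theorem 4.15 at each step, as both you (in your main argument) and the paper do.
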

\begin{proof}
The strategy of the proof is simply by considering every possible case and by applying parabolic induction. First note that for a semisimple group $G$ and a Borel subgroup $B$ the homogeneous variety $G/B$ is given as
\begin{center}
$G/B=G_1/B_1\times...\times G_r/B_r$
\end{center} where $G_i$ are simple groups and $B_i\subset G_i$ Borel subgroups. In view of Proposition 2.6 we can restrict ourselves to the case that $G$ is simple. The homogeneous varieties of the groups of type $A_n$ were treated by Kapranov \cite{KAP2}, Theorem 3.10. These homogeneous varieties admit full strongly exceptional collections consisting of locally free sheaves and hence tilting bundles. Note that this case also follows from Corollary 4.7 of the present work. We now consider the flag varieties of type $C_n$. These correspond to the group $\mathrm{Sp}_k(2n)$ and are partial isotropic flags in a symplectic vector space $V$. Since the complete isotropic flag $\mathrm{Sp}_k(2n)/B$ can be obtained as an iteration of projective bundles over $\mathbb{P}^{2n-1}$ (see \cite{SAM}, p.7), applying Theorem 4.1 several times yields that these homogeneous varieties have a tilting object. We proceed with the flag varieties of type $B_n$ and $D_n$. We restrict ourselves to the case of the orthogonal group corresponding to the Dynkin diagram $B_n$, the case of $D_n$ being similar. The arguments of the proof of Theorem 4.1 in \cite{SAM} show that the complete flags corresponding to $B_n$ are obtained as a successive iteration of smooth quadric fibrations over a smooth quadric $Q_{2n-1}\subset \mathbb{P}^{2n}$. To be precise, the complete flag $\mathrm{SO}_k(2n+1)/B$ is equipped with universal bundles $\mathcal{W}_i$, where $i=1,...,n$, fitting into a sequence
\begin{center}
$0\subset \mathcal{W}_1\subset...\subset\mathcal{W}_n\subset\mathcal{W}^{\perp}_n\subset...\subset\mathcal{W}^{\perp}_1\subset V\otimes \mathcal{O}_{\mathrm{SO}_k(2n+1)/B}$.
\end{center} Remind that $V$ is the $2n+1$-dimensional vector space equipped with a non-degenerate symmetric form $q\in S^2V^*$. Here $\mathcal{W}^{\perp}_i$ is the locally free sheaf orthogonal to $\mathcal{W}_i$ with respect to $q$. The flag $\mathrm{SO}_k(2n+1)/B$ is now the iteration of quadric fibrations $\mathcal{Q}_i\subset \mathbb{P}_{X_{i-1}}(\mathcal{M}_{i-1})$, where $\mathcal{M}_{i-1}=\mathcal{W}^{\perp}_{i-1}/\mathcal{W}_{i-1}$ and $X_{i-1}$ is the quadric fibration $\mathcal{Q}_{i-1}\subset \mathbb{P}_{X_{i-2}}(\mathcal{M}_{i-2})$. The base of this iteration is, as mentioned above, the smooth quadric $Q_{2n-1}$. In this special situation, at every step of the iteration the relative spinor bundles exist (see \cite{SAM}, p.9). Since the smooth quadric $Q_{2n-1}\subset \mathbb{P}^{2n}$ admits a full strongly exceptional collection of locally free sheaves, and hence a tilting bundle, (see \cite{KAP1}), applying Theorem 4.15 several times provides us with a tilting bundle for $\mathrm{SO}_k(2n+1)/B$. This completes the proof. 
\end{proof}
\begin{rema}
Note that for an arbitrary parabolic subgroup $P\subset G$ the homogeneous variety $G/P$ can be obtained as an iteration of fibrations with fibers being of the form $G_i/P_i$, where $G_i$ are semisimple and $P_i\subset G_i$ are maximal parabolic subgroups. Moreover, if $G$ is of type $B$, $C$ or $D$ then all $G_i$ are also of type $B$, $C$ or $D$. For $G$ a simply connected simple group of type $B$, $C$ or $D$ Kuznetsov and Polishchuk \cite{KUZ2} constructed exceptional collections on $G/P$, where $P$ is a maximal parabolic subgroup corresponding to vertices of the Dynkin diagramm. The $G$-equivariant structure of these collections allows to construct relative collections on any fibration with fiber $G/P$. If one manages to prove that this collection is full and strong, applying Theorem 4.15 several times would give us a tilting bundle on any $G/P$, with $P$ being an arbitrary parabolic subgroup. 
\end{rema}
The above theorem shows that at least for $G$ a semisimple algebraic group of classical type and $B$ a Borel subgroup, the variety $G/B$ admits a tilting bundle. As mentioned earlier, it is conjectured that $G/P$, where $P$ is an arbitrary parabolic subgroup, admits a full (strongly) exceptional collection. Up to now only partial results in favor of this conjecture were obtained. For details we refer to \cite{KUZ2} and references therein. To provide further evidence for Conjecture 3.5 it is natural to start with the investigation if the twisted forms of the homogeneous varieties $G/B$ of Theorem 5.1 admit tilting bundles. For this, we study the more general situation of twisted forms of the relative flag varieties considered in Section 4.\\

We roughly recall the basics of generalized Brauer--Severi schemes (see \cite{LSW}). Let $X$ be a noetherian $k$-scheme and $\mathcal{A}$ a sheaf of Azumaya algebras of rank $n^2$ over $X$ (see \cite{GRO}, \cite{GRO1} for details on Azumaya algebras). For an integer $1\leq l<n$ the generalized Brauer--Severi scheme $p:\mathrm{BS}(l,\mathcal{A})\rightarrow X$ is defined as the scheme representing the functor $F:\mathrm{Sch}/X\rightarrow \mathrm{Sets}$, where $(\psi:Y\rightarrow X )$ is mapped to the set of left ideals $\mathcal{J}$ of $\psi^*\mathcal{A}$ such that $\psi^*\mathcal{A}/\mathcal{J}$ is locally free of rank $n(n-l)$. By definition, there is an \'etale covering $U\rightarrow X$ and a locally free sheaf $\mathcal{E}$ of rank $n$ with the following trivializing diagram:
\begin{displaymath}
\begin{xy}
  \xymatrix{
      \mathrm{Grass}(l,\mathcal{E}) \ar[r]^{\pi} \ar[d]_{q}    &   \mathrm{BS}(l,\mathcal{A}) \ar[d]^{p}                   \\
      U \ar[r]^{g}             &   X             
  }
\end{xy}
\end{displaymath}
In the same way one defines the twisted relative flag $\mathrm{BS}(l_1,...,l_m,\mathcal{A})$ as the scheme representing the functor $F:\mathrm{Sch}/X\rightarrow \mathrm{Sets}$, where $(\psi:Y\rightarrow X )$ is mapped to the set of left ideals $\mathcal{J}_1\subset...\subset \mathcal{J}_m$ of $\psi^*\mathcal{A}$ such that $\psi^*\mathcal{A}/\mathcal{J}_i$ is locally free of rank $n(n-l_i)$. As for the generalized Brauer--Severi schemes, there is an \'etale covering $U\rightarrow X$ and a locally free sheaf $\mathcal{E}$ of rank $n$ with diagram
\begin{displaymath}
\begin{xy}
  \xymatrix{
      \mathrm{Flag}_U(l_1,...,l_m,\mathcal{E}) \ar[r]^{\pi} \ar[d]_{q}    &   \mathrm{BS}(l_1,...,l_m,\mathcal{A}) \ar[d]^{p}                   \\
      U \ar[r]^{g}             &   X             
  }
\end{xy}
\end{displaymath}
Note that the usual Brauer--Severi schemes are obtained from the generalized one by setting $l=1$. In this case one has a well known one-to-one correspondence between sheaves of Azumaya algebras of rank $n^2$ on $X$ and Brauer--Severi schemes of relative dimension $n-1$ via $\check{H}^1(X_{et}, \mathrm{PGL}_n)$ (see \cite{GRO}). Note that if the base scheme $X$ is a point a sheaf of Azumaya algebras on $X$ is a central simple $k$-algebra and the generalized Brauer--Severi schemes are the generalized Brauer--Severi varieties considered in Section 3. 
Now let $X$ be a smooth projective and integral $k$-scheme becoming rational after some finite separable extension. For a sheaf of Azumaya algebras $\mathcal{A}$ on $X$ let $p:\mathrm{BS}(l,\mathcal{A})\rightarrow X$ be the generalized Brauer--Severi scheme. If a scheme $Y$ is a rational smooth projective and integral $K$-scheme, the Brauer group of $\mathrm{Br}(Y)$ equals $\mathrm{Br}(K)$ (see \cite{YY}, Theorem 1.2.28). In our situation this implies that there exists a finite Galois extension $k\subset L$ such that we have the following diagram 
\begin{displaymath}
\begin{xy}
  \xymatrix{
      \mathrm{Grass}(l,\mathcal{E}) \ar[r]^{\tilde{\pi}} \ar[d]_{q}    &   \mathrm{BS}(l,\mathcal{A}) \ar[d]^{p}                   \\
      X_L \ar[r]^{\pi}             &   X             
  }
\end{xy}
\end{displaymath}
where $\mathcal{E}$ is a locally free sheaf on $X_L$ such that $\pi^*\mathcal{A}\simeq \mathcal{E}nd(\mathcal{E})$. Note that $\mathrm{Grass}(l,\mathcal{E}nd(\mathcal{E}))$ is naturally isomorphis to $\mathrm{Grass}(l,\mathcal{E})$. Denote by $G=\mathrm{Gal}(L|k)$ the Galois group and by $p_g:X_L\rightarrow X_L$ and $p_g:\mathrm{Grass}(l,\mathcal{E})\rightarrow \mathrm{Grass}(l,\mathcal{E})$ the morphisms induced by $g^{-1}:L\rightarrow L$. Applying fibrational techniques and descent theory, also used in the present work, Yan \cite{YY}, Theorem 4.1.16 proved the following result:
\begin{thm}
Let $X$ be a smooth projective $k$-scheme and $p:\mathrm{BS}(1,\mathcal{A})\rightarrow X$ the Brauer--Severi scheme. Suppose that $X$ becomes rational after a finite Galois extension $k\subset L$ and that $\mathrm{Gal}(L|k)$ acts trivially on $\mathrm{Pic}(X_L)$. If $X$ admits a tilting bundle, then $\mathrm{BS}(1,\mathcal{A})$ admits a tilting bundle too.
\end{thm}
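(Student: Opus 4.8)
The plan is to prove this by Galois descent, reducing the problem to the split relative projective bundle over $X_L$, applying Theorem 4.1 there, and then descending a carefully chosen tilting bundle back down to $\mathrm{BS}(1,\mathcal{A})$. As in the diagram above, after enlarging the finite Galois extension $k\subset L$ if necessary, I may assume simultaneously that $X_L$ is rational and that the Brauer class of $\pi^{*}\mathcal{A}$ vanishes: indeed $\mathrm{Br}(X_L)=\mathrm{Br}(L)$ since $X_L$ is a smooth projective integral rational $L$-scheme, so the class of $\pi^{*}\mathcal{A}$ is pulled back from $\mathrm{Br}(L)$ and can be split by a further finite extension of $L$. Hence $\pi^{*}\mathcal{A}\simeq\mathcal{E}nd(\mathcal{E})$ for a locally free sheaf $\mathcal{E}$ of rank $n$ on $X_L$, and $\mathrm{BS}(1,\mathcal{A})\otimes_{k}L\simeq\mathrm{BS}(1,\mathcal{E}nd(\mathcal{E}))\simeq\mathbb{P}(\mathcal{E})$, the base-change morphism being the map $\tilde{\pi}\colon\mathbb{P}(\mathcal{E})\to\mathrm{BS}(1,\mathcal{A})$ fitting into a square over $\pi\colon X_L\to X$. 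Write $G=\mathrm{Gal}(L|k)$.

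Since $k\subset L$ is separable, $\mathcal{T}_{L}:=\mathcal{T}\otimes_{k}L$ is a tilting bundle on $X_L$ by Proposition 2.7, so Theorem 4.1 already yields a tilting bundle on $\mathbb{P}(\mathcal{E})$. I would use the explicit shape furnished by Theorem 4.15, specialised to $q\colon\mathbb{P}(\mathcal{E})\to X_L$ with the relative Beilinson collection $\mathcal{O}_{\mathbb{P}(\mathcal{E})},\dots,\mathcal{O}_{\mathbb{P}(\mathcal{E})}(n-1)$ (which restricts to a full strongly exceptional collection on each fibre $\mathbb{P}^{n-1}$): for a suitable ample line bundle $\mathcal{M}$ on $X_L$ the bundle
\[
\mathcal{U}=\bigoplus_{i=0}^{n-1}q^{*}\!\bigl(\mathcal{T}_{L}\otimes\mathcal{M}^{\otimes i}\bigr)\otimes\mathcal{O}_{\mathbb{P}(\mathcal{E})}(i)
\]
is tilting. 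Inspecting the proof of Theorem 4.15, $\mathcal{M}$ may be taken to be $\pi^{*}\mathcal{M}_{0}$ for a sufficiently positive ample line bundle $\mathcal{M}_{0}$ on $X$, so that $q^{*}(\mathcal{T}_{L}\otimes\mathcal{M}^{\otimes i})=\tilde{\pi}^{*}p^{*}(\mathcal{T}\otimes\mathcal{M}_{0}^{\otimes i})$ is already pulled back from $\mathrm{BS}(1,\mathcal{A})$. Thus the only factors of $\mathcal{U}$ which do not visibly descend are the relative twisting sheaves $\mathcal{O}_{\mathbb{P}(\mathcal{E})}(i)$ --- which is exactly the reason $\mathrm{BS}(1,\mathcal{A})$ is a nontrivial twisted form.

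To remedy this I would construct on $\mathrm{BS}(1,\mathcal{A})$ the relative analogue of the absolutely split sheaves of Section~3 (i.e.\ of \cite{NO}): locally free sheaves $\mathcal{W}_{0},\dots,\mathcal{W}_{n-1}$ obtained from a relative minimal left ideal of $p^{*}\mathcal{A}$ by iteration, whose pullback along $\tilde{\pi}$ is $\mathcal{O}_{\mathbb{P}(\mathcal{E})}(i)^{\oplus s_{i}}$ for integers $s_{i}>0$, up to a twist by a line bundle pulled back from $X$ which can be absorbed into the $\mathcal{M}_{0}$-factors (re-enlarging $\mathcal{M}_0$ if needed). Granting these, set
\[
\mathcal{T}_{\mathrm{BS}}=\bigoplus_{i=0}^{n-1}p^{*}\!\bigl(\mathcal{T}\otimes\mathcal{M}_{0}^{\otimes i}\bigr)\otimes\mathcal{W}_{i}.
\]
Then $\tilde{\pi}^{*}\mathcal{T}_{\mathrm{BS}}\simeq\bigoplus_{i}q^{*}(\mathcal{T}_{L}\otimes\mathcal{M}^{\otimes i})\otimes\mathcal{O}_{\mathbb{P}(\mathcal{E})}(i)^{\oplus s_{i}}$, which is a tilting bundle on $\mathbb{P}(\mathcal{E})\simeq\mathrm{BS}(1,\mathcal{A})\otimes_{k}L$ by Proposition 2.4 applied to $\mathcal{U}$. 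Since $\mathcal{T}_{\mathrm{BS}}\otimes_{k}L$ is thus tilting, Proposition 2.8 shows that $\mathcal{T}_{\mathrm{BS}}$ is a tilting object --- hence a tilting bundle --- for $D^{b}(\mathrm{BS}(1,\mathcal{A}))$; finiteness of the global dimension of $\mathrm{End}(\mathcal{T}_{\mathrm{BS}})$ follows from smoothness by Theorem 2.3.

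The main obstacle is the third step: carrying out the relative version of \cite{NO}, namely constructing the sheaves $\mathcal{W}_{i}$ on $\mathrm{BS}(1,\mathcal{A})$, establishing their uniqueness, and verifying the pullback identities together with the requirement that the line-bundle twists occurring in their trivialisations over $X_L$ descend to $X$. This is precisely the point at which the hypothesis that $G$ acts trivially on $\mathrm{Pic}(X_L)$ is used: it is what makes the relevant $G$-descent data available. Everything else (Propositions 2.4, 2.7, 2.8 and Theorems 2.3, 4.1 and 4.15) is formal input.
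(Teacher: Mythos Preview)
The paper does not actually prove this theorem: it is quoted as Yan's result (\cite{YY}, Theorem~4.1.16) and stated without proof. What the paper \emph{does} prove is the stronger Theorem~5.4 (for $\mathrm{BS}(l,\mathcal{A})$ in characteristic zero, with no hypothesis on the Galois action on $\mathrm{Pic}$), and it remarks afterwards that this proof already yields Theorem~5.3 without the Picard hypothesis.

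Your proposal follows essentially the same descent strategy as the paper's proof of Theorem~5.4, specialised to $l=1$: pass to a splitting extension so that the Brauer--Severi scheme becomes a projective bundle, invoke the explicit tilting bundle of Theorem~4.15 with the relative Beilinson collection, thicken the non-descending summands $\mathcal{O}_{\mathbb{P}(\mathcal{E})}(i)$ to bundles that do descend, and conclude via Proposition~2.8. The two arguments differ in bookkeeping rather than substance. The paper base-changes all the way to $\bar{k}$ and appeals to \cite{LSW} for the fact that $\Sigma^{\lambda}(\mathcal{R})^{\oplus n_{\lambda}}$ descends for suitable $n_{\lambda}$, building the descended object $\mathcal{J}_{\lambda}$ on the split side and then pushing it down; you instead propose to build the sheaves $\mathcal{W}_{i}$ directly on $\mathrm{BS}(1,\mathcal{A})$ as relative versions of the absolutely split sheaves of \cite{NO}, and you use the trivial Galois action on $\mathrm{Pic}(X_L)$ to control the line-bundle twists appearing in their trivialisations. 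This is a reasonable way to organise the argument and makes transparent where Yan's extra hypothesis enters; the paper's route via \cite{LSW} is what allows that hypothesis to be dropped. In either packaging the genuine work is the descent of (a thickening of) the relative $\mathcal{O}(i)$, which you correctly flag as the main obstacle.
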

In hole generality Bernardara \cite{BER} proved that Brauer--Severi schemes always have a semiorthogonal decomposition. Exploiting Theorem 4.6 we can now generalize Theorem 3.3 respectively 5.3, at least in characteristic zero, and obtain:
\begin{thm}
Let $k$ be a field of characteristic zero, $X$ a smooth projective $k$-scheme and $p:\mathrm{BS}(l,\mathcal{A})\rightarrow X$ the generalized Brauer--Severi scheme. Suppose that $X$ becomes rational after a separable field extension $k\subset L$. If $X$ admits a tilting bundle, then $\mathrm{BS}(d,\mathcal{A})$ admits a tilting bundle too.
\end{thm}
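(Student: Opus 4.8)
The plan is to descend a tilting bundle from the geometric fiber, exactly in the spirit of the proof of Theorem 3.3, but now using Theorem 4.6 (equivalently Theorem 4.15) in place of the characteristic-free tilting bundle on the absolute Grassmannian. First I would invoke the rationality hypothesis: since $X$ becomes rational after a separable extension, a standard argument (passing to a finite Galois extension) shows the Brauer class of $\mathcal{A}$ is split geometrically, so there is a finite Galois extension $k\subset L$, with $G=\mathrm{Gal}(L|k)$, a locally free sheaf $\mathcal{E}$ of rank $n$ on $X_L$ with $\pi^*\mathcal{A}\simeq \mathcal{E}nd(\mathcal{E})$, and a commutative square
\begin{displaymath}
\begin{xy}
  \xymatrix{
      \mathrm{Grass}(l,\mathcal{E}) \ar[r]^{\tilde{\pi}} \ar[d]_{q}    &   \mathrm{BS}(l,\mathcal{A}) \ar[d]^{p}                   \\
      X_L \ar[r]^{\pi}             &   X
  }
\end{xy}
\end{displaymath}
so that $\mathrm{BS}(l,\mathcal{A})\otimes_k L\simeq \mathrm{Grass}_{X_L}(l,\mathcal{E})$. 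Here I use that $\mathrm{Grass}(l,\mathcal{E}nd(\mathcal{E}))\simeq\mathrm{Grass}(l,\mathcal{E})$.

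Next I would produce a tilting bundle on $\mathrm{Grass}_{X_L}(l,\mathcal{E})$ by Theorem 4.6: by Proposition 2.7, since $X$ has a tilting bundle $\mathcal{T}_X$ and $k\subset L$ is separable, $\mathcal{T}_X\otimes_k L$ is a tilting bundle on $X_L$, and $X_L$ is smooth projective over the algebraically closed... — here one must be slightly careful, as Theorem 4.6 is stated over an algebraically closed field of characteristic zero. I would handle this by first base-changing further to $\bar k$, applying Theorem 4.6 over $\bar k$ to get a tilting bundle $\mathcal{T}=\bigoplus_{\lambda\in P'}\tilde\pi^*\mathcal{T}_{X_L}\otimes\Sigma^\lambda(\mathcal{R})$ on $\mathrm{Grass}_{X_L}(l,\mathcal{E})\otimes_L\bar k$, and then observing that this bundle is already defined over $L$ (the tautological subbundle $\mathcal{R}$, the Schur functors $\Sigma^\lambda$, and $\mathcal{T}_{X_L}$ all live on $X_L$), so that $\mathcal{T}$ itself is a bundle on $\mathrm{Grass}_{X_L}(l,\mathcal{E})$ whose base change to $\bar k$ is tilting; Proposition 2.8 then shows $\mathcal{T}$ is tilting on $\mathrm{Grass}_{X_L}(l,\mathcal{E})$.

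The heart of the argument is to descend $\mathcal{T}$ along $\mathrm{BS}(l,\mathcal{A})\otimes_k L\to\mathrm{BS}(l,\mathcal{A})$, i.e.\ to find a bundle $\mathcal{S}$ on $\mathrm{BS}(l,\mathcal{A})$ with $\mathcal{S}\otimes_k L\simeq \mathcal{T}$ (or at least with $\mathcal{S}\otimes_k L$ isomorphic to a bundle of the form $\bigoplus_\lambda(\tilde\pi^*\mathcal{T}_{X_L}\otimes\Sigma^\lambda(\mathcal{R}))^{\oplus r_\lambda}$ for suitable multiplicities $r_\lambda>0$). By Proposition 2.8 any such $\mathcal{S}$ is automatically tilting on $\mathrm{BS}(l,\mathcal{A})$, and by Proposition 2.4 the freedom to fatten up with multiplicities costs nothing. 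To construct $\mathcal{S}$ I would assemble a $G$-equivariant (Galois descent) datum on $\mathcal{T}$: the obstruction is that the individual summands $\Sigma^\lambda(\mathcal{R})$ need not be $G$-stable nor descend on their own, since the locally free sheaf $\mathcal{E}$ on $X_L$ is only defined up to twist and the Galois cocycle for $\mathcal{A}$ is nontrivial. This is precisely the phenomenon encountered in Theorem 3.3 and in Yan's Theorem 5.3: one remedies it by replacing each $\Sigma^\lambda(\mathcal{R})$ by a suitable direct sum of its $G$-translates, using \cite{NO}, Proposition 3.3 to get a well-defined locally free sheaf on $\mathrm{BS}(l,\mathcal{A})$ whose pullback to $\mathrm{Grass}_{X_L}(l,\mathcal{E})$ is a multiple of $\Sigma^\lambda(\mathcal{R})$, combined with descent of the twist $\pi^*\mathcal{T}_X$ (which descends by hypothesis, being pulled back from $X$, via the square above) — here one also needs that $\mathcal{T}_X$ itself can be chosen so that $\mathcal{T}_X\otimes_k L$ has a $G$-equivariant structure compatible with the whole construction, which follows from the hypothesis that $X$ admits a tilting bundle together with the uniqueness/rigidity of the sheaves involved. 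Summing these descended pieces over $\lambda\in P'$ yields $\mathcal{S}$, and finiteness of global dimension of $\mathrm{End}(\mathcal{S})$ follows from Theorem 2.3(iii) since $\mathrm{BS}(l,\mathcal{A})$ is smooth over $k$.

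The main obstacle is the equivariance/descent bookkeeping in the last paragraph: organizing the $G$-action on $\bigoplus_{\lambda\in P'}\tilde\pi^*(\mathcal{T}_X\otimes_k L)\otimes\Sigma^\lambda(\mathcal{R})$ so that it actually defines a descent datum on some bundle over $\mathrm{BS}(l,\mathcal{A})$ rather than merely permuting summands, and verifying that the multiplicities forced by \cite{NO}, Proposition 3.3 are uniform enough that the whole sum descends simultaneously. Everything else — the rationality reduction, the passage to $\bar k$ and back, the application of Theorem 4.6 — is routine given the results already established.
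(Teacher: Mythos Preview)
Your overall strategy---construct a tilting bundle on $\mathrm{Grass}(l,\mathcal{E})$ over the algebraic closure using Theorem 4.6/4.15, arrange that it descends, and conclude via Proposition 2.8---is exactly the paper's approach. The descent of $q^*\pi^*\mathcal{T}_X$ (automatic from the square) and of $\Sigma^{\lambda}(\mathcal{R})^{\oplus n_{\lambda}}$ (via the argument you cite) is also handled just as the paper does.

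There is, however, a genuine gap in one place. The bundle you write down, $\bigoplus_{\lambda\in P'}\tilde{\pi}^*\mathcal{T}_{X_L}\otimes\Sigma^{\lambda}(\mathcal{R})$, is \emph{not} what Theorem 4.6 actually produces as a tilting bundle. If you reread the proof of Theorem 4.6, the Ext-vanishing for this sum is only obtained after replacing $\mathcal{E}$ by $\mathcal{E}\otimes\mathcal{L}^{\otimes(-n)}$ for an ample $\mathcal{L}$ and $n\gg 0$; equivalently, in the formulation of Theorem 4.15, the tilting bundle has the shape $\bigoplus_{\lambda}q^*(\mathcal{T}'\otimes\mathcal{L}'^{\otimes|\lambda|})\otimes\Sigma^{\lambda}(\mathcal{R})$ for a suitable ample invertible sheaf $\mathcal{L}'$ on the base. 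Without this twist the higher Ext groups $\mathrm{Ext}^i(\mathcal{T}',\mathcal{T}'\otimes\Sigma^{\gamma}(\mathcal{E}^{\vee}))$ need not vanish, so the bundle you propose to descend need not be tilting, and neither will any sum of copies of it.

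The paper's one additional idea, which your proposal is missing, is precisely how to make this twist compatible with descent: one chooses a very ample sheaf $\mathcal{M}$ on $X$ itself, sets $\mathcal{L}'=(\mathcal{M}\otimes_k\bar{k})^{\otimes n}$, and then $q^*\mathcal{L}'^{\otimes|\lambda|}$ descends automatically because it equals $\tilde{\pi}^*p^*\mathcal{M}^{\otimes n|\lambda|}$ by commutativity of the square. One then sets $\mathcal{J}_{\lambda}=\Sigma^{\lambda}(\mathcal{R})^{\oplus n_{\lambda}}\otimes q^*\mathcal{L}'^{\otimes|\lambda|}$ and $\mathcal{S}=\bigoplus_{\lambda}q^*\pi^*\mathcal{T}\otimes\mathcal{J}_{\lambda}$; this descends by construction, and for $n\gg 0$ the proof of Theorem 4.15 shows it is tilting on $\mathrm{Grass}(l,\mathcal{E})$. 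Once you insert this twist-from-$X$ step, your argument goes through and coincides with the paper's.
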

\begin{proof}
Since $X_L$ is rational, $\mathrm{Br}(X_L)=\mathrm{Br}(L)$ (see \cite{YY}, Theorem 1.2.28). We therefore have the following diagram

\begin{displaymath}
\begin{xy}
  \xymatrix{
      \mathrm{Grass}(l,\mathcal{E}) \ar[r]^{\tilde{\pi}} \ar[d]_{q}    &   \mathrm{BS}(l,\mathcal{A}) \ar[d]^{p}                   \\
      X_{\bar{k}} \ar[r]^{\pi}             &   X             
  }
\end{xy}
\end{displaymath} and hence $\mathrm{BS}(l,\mathcal{A})\otimes_k \bar{k}\simeq \mathrm{Grass}(l,\mathcal{E})$. By Proposition 2.8 it suffices to prove the existence of a tilting object on $\mathrm{Grass}(l,\mathcal{E})$ which descents to an object on $\mathrm{BS}(l,\mathcal{A})$. Denote by $\mathcal{R}$ the tautological subbundle on $\mathrm{Grass}(l,\mathcal{E})$ and by $\lambda$ a partition with at most $l$ rows and at most $n-l$ columns. Let $\mathcal{L}$ be some invertible sheaf on $X$ and $\mathcal{L}'=\mathcal{L}\otimes_k\bar{k}$. Note that $\Sigma^{\lambda}(\mathcal{R})^{\oplus n_{\lambda}}$ descents to a sheaf on $\mathrm{BS}(l,\mathcal{A})$ for suitable $n_{\lambda}$. Now let 
\begin{eqnarray*}
\mathcal{J}_{\lambda}=(\Sigma^{\lambda}(\mathcal{R})^{\oplus n_{\lambda}})\otimes q^*\mathcal{L}'^{\otimes |\lambda|})
\end{eqnarray*} where $\mathcal{L}'$ is invertible on $X_{\bar{k}}$. By construction the locally free sheaf $\mathcal{J}_{\lambda}$ descents to a sheaf on $\mathrm{BS}(l,\mathcal{A})$. Notice that $q^*\mathcal{L}'$ descents since the above diagram is commutative. Let $P(l,n-l)$ be the set of partitions $\lambda$ with at most $l$ rows and at most $n-l$ columns and choose a total order $\prec$ on this set where $\lambda \prec \mu$ if $|\lambda|<|\mu|$. Now let $P'$ be the set $P(l,n-l)$ equipped with this total order and $\mathcal{T}$ the tilting bundle on $X$. Consider the locally free sheaf
\begin{eqnarray*}
\mathcal{S}=\bigoplus_{\lambda\in P'}q^*\pi^*\mathcal{T}\otimes \mathcal{J}_{\lambda}
\end{eqnarray*} on $\mathrm{Grass}(l,\mathcal{E})$. Note that by construction $\mathcal{S}$ descents to a sheaf on $\mathrm{BS}(l,\mathcal{A})$. We claim that $\mathcal{S}$ is a tilting sheaf on $\mathrm{Grass}(l,\mathcal{E})$ for a suitable $\mathcal{L}$. By Proposition 2.7 the locally free sheaf $\pi^*\mathcal{T}$ is a tilting bundle on $X_{\bar{k}}$. In the following we denote this sheaf simply by $\mathcal{T}'$. With this notation we have
\begin{eqnarray*}
\mathcal{S}&= &\bigoplus_{\lambda\in P'}q^*\mathcal{T}'\otimes \mathcal{J}_{\lambda}\\
 &= &\bigoplus_{\lambda\in P'}q^*(\mathcal{T}'\otimes \mathcal{L}'^{\otimes|\lambda|})\otimes (\Sigma^{\lambda}(\mathcal{R})^{\oplus n_{\lambda}})
\end{eqnarray*}
Since $X$ is projective we choose a very ample sheaf $\mathcal{M}$ on $X$ and consider $\mathcal{M}^{\otimes n}$ for $n>0$. Note that since $\mathcal{M}$ is very ample on $X$, $\mathcal{M}\otimes_k \bar{k}$ is very ample, and therefore ample, on $X_{\bar{k}}$. If we set $\mathcal{L}=\mathcal{M}^{\otimes n}$ and $\mathcal{L}'= (\mathcal{M}^{\otimes n})\otimes_k \bar{k}=(\mathcal{M}\otimes_k \bar{k})^{\otimes n}$, Proposition 2.4, Example 4.19 and the proof of Theorem 4.15 show that we can choose $n>>0$ in such a way that $\mathcal{S}$ becomes a tilting sheaf on $\mathrm{Grass}(l,\mathcal{E})$. Since $\mathcal{S}$ descents, Proposition 2.8 provides us with a tilting sheaf on $\mathrm{BS}(l,\mathcal{A})$. This completes the proof.
\end{proof}
In characteristic zero Baek \cite{BAE} constructed  semiorthogonal decompositions for generalized Brauer--Severi schemes and twisted relative flag varieties in hole generality. But it is not yet clear how to find in this generality tilting bundles on generalized Brauer--Severi schemes, provided the base scheme admits one. Also notice that the proof of Theorem 5.4 shows that Theorem 5.3 holds true without the assumption that the Galois group has to act trivially on the Picard group. A direct consequence of Theorem 5.4 is the following:
\begin{cor}
Let $X$ be as above and $p:\mathrm{BS}(l_1,...,l_m,\mathcal{A})\rightarrow X$ a twisted relative flag variety. Suppose that $X$ becomes rational after a separable field extension. If $X$ admits a tilting bundle, then $\mathrm{BS}(l_1,...,l_m,\mathcal{A})$ admits a tilting bundle too.
\end{cor}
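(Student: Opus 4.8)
The plan is to deduce the statement from Theorem 5.4 by induction on the length $m$ of the flag, exploiting the tower structure of twisted relative flags. The base case $m=1$ is Theorem 5.4 itself, applied to the generalized Brauer--Severi scheme $\mathrm{BS}(l_1,\mathcal{A})\to X$. For the induction step, write $Y=\mathrm{BS}(l_2,...,l_m,\mathcal{A})$ and first observe that, by \'etale descent from the factorization $\mathrm{Flag}_U(l_1,...,l_m,\mathcal{E})=\mathrm{Grass}_{\mathrm{Flag}_U(l_2,...,l_m,\mathcal{E})}(l_1,\mathcal{R}_2)$ recalled in Section 4, the structure morphism $\mathrm{BS}(l_1,...,l_m,\mathcal{A})\to Y$ is itself a generalized Brauer--Severi scheme $\mathrm{BS}(l_1,\mathcal{A}')\to Y$, where $\mathcal{A}'$ is the sheaf of Azumaya algebras on $Y$ which \'etale-locally on $X$ is $\mathcal{E}nd(\mathcal{R}_2)$.

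Next I would check that $Y$ satisfies the hypotheses of Theorem 5.4. It is smooth, projective and integral, because twisted flag schemes over a smooth projective integral base are again of this type (the fibres are geometrically integral). It admits a tilting bundle by the induction hypothesis. Finally, it becomes rational after a separable field extension: choosing a separable extension over which $X$ becomes rational, and passing to $\bar k$ where moreover $\mathrm{Br}(X_{\bar k})=\mathrm{Br}(\bar k)$ is trivial so that $\mathcal{A}_{\bar k}\simeq\mathcal{E}nd(\mathcal{E})$, the base change $Y_{\bar k}$ is an iterated Grassmannian bundle over the rational scheme $X_{\bar k}$, hence rational. Theorem 5.4, applied to $\mathrm{BS}(l_1,\mathcal{A}')\to Y$, then produces a tilting bundle on $\mathrm{BS}(l_1,...,l_m,\mathcal{A})$, completing the induction.

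The main obstacle I anticipate is the clean identification of the intermediate stages: one must verify that the moduli functor defining $\mathrm{BS}(l_1,...,l_m,\mathcal{A})$ genuinely factors through $\mathrm{BS}(l_2,...,l_m,\mathcal{A})$ as the twisted Grassmannian of the tautological subbundle, and that the resulting Azumaya algebra $\mathcal{A}'$ on $Y$ is well defined \'etale-locally and glues. This is a formal consequence of the analogous factorization of $\mathrm{Flag}_U$ together with \'etale descent, but it must be set up carefully so that the hypothesis ``rational after a separable extension'' is inherited at each stage. As an alternative route avoiding the induction, one could imitate the proof of Theorem 5.4 directly: base change to $\bar k$, where the scheme becomes $\mathrm{Flag}_{X_{\bar k}}(l_1,...,l_m,\mathcal{E})$ and carries the iterated Schur--functor tilting bundle of Corollary 4.7, and then twist its direct summands by suitable high powers of an ample invertible sheaf pulled back from $X$ so that the resulting tilting sheaf descends to $\mathrm{BS}(l_1,...,l_m,\mathcal{A})$, the requisite $\mathrm{Ext}$-vanishing being controlled exactly as in the proof of Theorem 4.15. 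Proposition 2.8 then returns the tilting bundle downstairs.
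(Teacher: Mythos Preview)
Your proposal is correct and follows essentially the same approach as the paper: induction on $m$, factoring $\mathrm{BS}(l_1,\ldots,l_m,\mathcal{A})$ as a generalized Brauer--Severi scheme $\mathrm{BS}(l_1,\mathcal{A}')$ over $Y=\mathrm{BS}(l_2,\ldots,l_m,\mathcal{A})$ with $\mathcal{A}'$ obtained from $\mathcal{E}nd(\mathcal{R}_2)$ by descent, and then invoking Theorem 5.4. Your write-up is in fact more careful than the paper's, which leaves the verification that $Y$ inherits the rationality hypothesis implicit.
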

\begin{proof}
We shall prove by induction on $m$. The case $m=1$ follows from Theorem 5.4. Now assume that the result holds for $m-1$. We have projections
\begin{center}
$\mathrm{Flag}_{X_{\bar{k}}}(l_1,...,l_m,\mathcal{E})\stackrel{q_m}\longrightarrow...\stackrel{q_2}\longrightarrow \mathrm{Flag}_{X_{\bar{k}}}(l_m,\mathcal{E})\stackrel{q_1}\longrightarrow X_{\bar{k}}$
\end{center} and 
\begin{center}
$\mathrm{BS}(l_1,...,l_m,\mathcal{A})\stackrel{p_m}\longrightarrow...\stackrel{p_2}\longrightarrow \mathrm{BS}(l_m,\mathcal{A})\stackrel{p_1}\longrightarrow X$.
\end{center} Now let $\mathcal{R}\subset (q_1\circ...\circ q_m)^*\mathcal{E}$ be the tautological sheaf on $\mathrm{Flag}_{X_{\bar{k}}}(l_2,...,l_m,\mathcal{E})$ and let $\mathcal{A}'$ be the sheaf of Azumaya algebras on $\mathrm{BS}(l_2,...,l_m,\mathcal{A})$ obtained from $\mathcal{E}nd(\mathcal{R})$ by descent. This implies $\mathrm{Flag}_{X_{\bar{k}}}(l_1,...,l_m,\mathcal{E})=\mathrm{Grass}_{\mathrm{Flag}_{X_{\bar{k}}}(l_2,...,l_m,\mathcal{E})}(l_1,\mathcal{R})$ and $\mathrm{BS}(l_1,...,l_m,\mathcal{A})=\mathrm{BS}(l_1,\mathcal{A}')$. 
The assertion then follows from Theorem 5.4. 
\end{proof}
As mentioned in the introduction, we believe that adopting the approach developed by Buchweitz, Leuschke and Van den Bergh \cite{BLB} in the relative setting should give us a tilting bundle on $\mathrm{BS}(l,\mathcal{A})$ without the assumption on $k$ being of characteristic zero.

In the particular case when $X=\mathrm{Spec}(k)$ is a point, the Galois group $\mathrm{Gal}(L|k)$ clearly acts trivially on $\mathrm{Pic}(\mathrm{Spec}(L))$. We now explain how Theorem 5.3 and Corollary 5.5 can be applied to get tilting objects on twisted forms of some homogeneous varieties, providing further evidence for Conjecture 3.5. We start with twisted forms of homogeneous varieties of type $C_n$. Let $\mathrm{Sp}(2n)/B$ be the complete isotropic flag over an algebraically closed field of characteristic zero, mentioned in Theorem 5.1. This flag variety can be obtained as an iteration of projective bundles starting from $\mathbb{P}^{2n-1}$, lets say
\begin{center}
$\mathbb{P}(\mathcal{E}_m)\stackrel{q_m}\longrightarrow...\stackrel{q_2}\longrightarrow \mathbb{P}(\mathcal{E}_1)\stackrel{q_1}\longrightarrow \mathbb{P}^{2n-1}$.
\end{center}
Now let $X$ be a twisted from of $\mathbb{P}^{2n-1}$, that is a Brauer--Severi variety of dimension $2n-1$ over a field of characteristic zero, and $\mathcal{A}_1$ a sheaf of Azumaya algebras on $X$ obtained from $\mathcal{E}nd(\mathcal{E}_1)$ by descent. Inductively we get sheaves of Azumaya algebras $\mathcal{A}_i$ on $\mathrm{BS}(\mathcal{A}_{i-1})$ obtained from $\mathcal{E}nd(\mathcal{E}_i)$ by descent. Therefore we get a twisted form 
\begin{center}
$\mathrm{BS}(\mathcal{A}_m)\stackrel{p_m}\longrightarrow...\stackrel{p_2}\longrightarrow \mathrm{BS}(\mathcal{A}_1)\stackrel{q_1}\longrightarrow X$
\end{center} of the flag variety $\mathrm{Sp}(2n)/B$ that was previously given as 
\begin{center}
$\mathbb{P}(\mathcal{E}_m)\stackrel{q_m}\longrightarrow...\stackrel{q_2}\longrightarrow \mathbb{P}(\mathcal{E}_1)\stackrel{q_1}\longrightarrow \mathbb{P}^{2n-1}$.
\end{center}
Note that after base change to a finite Galois extension $k\subset L$ the twisted form
\begin{center}
$\mathrm{BS}(\mathcal{A}_m)\stackrel{p_m}\longrightarrow...\stackrel{p_2}\longrightarrow \mathrm{BS}(\mathcal{A}_1)\stackrel{q_1}\longrightarrow X$
\end{center} becomes 
\begin{center}
$\mathbb{P}(\widetilde{\mathcal{E}}_m)\stackrel{q_m}\longrightarrow...\stackrel{q_2}\longrightarrow \mathbb{P}(\widetilde{\mathcal{E}}_1)\stackrel{q_1}\longrightarrow \mathbb{P}^{2n-1}$
\end{center} for certain locally free sheaves $\widetilde{\mathcal{E}}_i$.
Hence applying Theorem 5.3 several times yields:
\begin{cor}
For a field $k$ of characteristic zero, let $X$ be the above twisted form of the homogeneous variety $\mathrm{Sp}(2n)/B$. Then $D^b(X)$ admits a tilting object.
\end{cor}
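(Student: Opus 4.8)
The plan is to prove the statement by induction on the length $m$ of the tower
\[
\mathrm{BS}(\mathcal{A}_m)\stackrel{p_m}{\longrightarrow}\cdots\stackrel{p_2}{\longrightarrow}\mathrm{BS}(\mathcal{A}_1)\stackrel{q_1}{\longrightarrow}X ,
\]
applying Theorem 5.4 one floor at a time. Writing $Y_0=X$ and $Y_i=\mathrm{BS}(\mathcal{A}_i)$, the inductive claim for $Y_i$ will have three parts: $Y_i$ is a smooth projective and integral $k$-scheme, it admits a tilting bundle, and it becomes rational after a separable field extension of $k$. All three are needed precisely so that $Y_i$ can serve as the base scheme in Theorem 5.4 at the next step.

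For the base case $Y_0=X$ I would invoke Theorem 3.1 (equivalently Theorem 3.3 with $d=1$): the $(2n-1)$-dimensional Brauer--Severi variety $X$ carries the tilting bundle $\bigoplus_{i=0}^{2n-1}\mathcal{W}_i$. By hypothesis $X$ is smooth, projective and integral, and there is a finite Galois extension $k\subset L$ with $X\otimes_kL\simeq\mathbb{P}^{2n-1}_L$ (see \cite{GIS}), which is rational; so all three parts hold for $Y_0$.

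For the inductive step I would argue as follows. The morphism $p_i\colon Y_i=\mathrm{BS}(\mathcal{A}_i)\to Y_{i-1}$ is a Brauer--Severi scheme (the case $l=1$) attached to the sheaf of Azumaya algebras $\mathcal{A}_i$ on $Y_{i-1}$, hence \'etale-locally a projective bundle; in particular $Y_i$ is again smooth, projective and integral over $k$. Since by the inductive hypothesis $Y_{i-1}$ admits a tilting bundle and becomes rational after a separable extension, Theorem 5.4 applies to $p_i$ and produces a tilting bundle on $Y_i$. Finally, to see that $Y_i$ itself becomes rational after a separable extension, choose a finite Galois extension $k\subset L'$ which simultaneously makes $Y_{i-1}$ rational and splits $\mathcal{A}_i$; then $Y_i\otimes_kL'$ is a Zariski-locally trivial projective bundle over the rational variety $Y_{i-1}\otimes_kL'$, and such a bundle is rational. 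This completes the induction, and after $m$ steps $Y_m=\mathrm{BS}(\mathcal{A}_m)$ --- the asserted twisted form of $\mathrm{Sp}(2n)/B$ --- admits a tilting bundle, hence in particular a tilting object.

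The argument is essentially bookkeeping, and I do not expect a genuine obstacle; the one point that deserves care is the propagation of the hypothesis ``rational after a separable field extension'' up the tower, since Theorem 5.4 must remain applicable at every floor. This is handled by the passage to a common splitting-cum-rationalizing extension $L'$, together with the elementary fact that a Zariski-locally trivial projective bundle over a rational base is rational (a dense open affine of such a bundle is an affine space).
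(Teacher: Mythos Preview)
Your proof is correct and follows essentially the same inductive scheme as the paper: climb the tower one Brauer--Severi scheme at a time, applying the relevant theorem at each floor. The only notable difference is that the paper invokes Theorem~5.3 (Yan's result, which requires the Galois group to act trivially on $\mathrm{Pic}$), whereas you invoke the stronger Theorem~5.4, which dispenses with that hypothesis; your choice is cleaner, and your explicit verification that the condition ``rational after a separable extension'' propagates up the tower fills in a point the paper's proof leaves implicit.
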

\begin{proof}
Again we prove by induction on $m$. The case $m=1$ follows from Theorem 5.3. Now assume the result holds for $m-1$. We have the projection $q_m:\mathbb{P}(\widetilde{\mathcal{E}}_m)\rightarrow \mathbb{P}(\widetilde{\mathcal{E}}_{m-1})$. Let $\mathcal{A}_m$ be the sheaf of Azumaya algebras on $\mathrm{BS}(\mathcal{A}_{m-1})$ obtained from $\mathcal{E}nd(\widetilde{\mathcal{E}}_m)$ by descent. Note that $\mathrm{BS}(\mathcal{A}_m)$ is a twisted form of $\mathbb{P}(\widetilde{\mathcal{E}}_m)$. Since $\mathrm{Gal}(L|k)$ acts trivially on $\mathbb{P}(\widetilde{\mathcal{E}}_{m-1})$, Theorem 5.3 yield the assertion.  
\end{proof}
For a field $k$ of characteristic zero, let $V$ be a $n^2$-dimensional $\bar{k}$-vector space and $\mathrm{Flag}(l_1,...,l_m,V)$ a partial flag variety. Consider a central simple $k$-algebra $A$ obtained from $\mathrm{End}(V)=M_n(\bar{k})$ by descent. Now let $X$ be a twisted form of the partial flag variety $\mathrm{Flag}(l_1,...,l_m,V)$, given as $\mathrm{BS}(l_1,...,l_m,A)$. Clearly we have $\mathrm{BS}(l_1,...,l_m,A)\otimes_k \bar{k}\simeq \mathrm{Flag}(l_1,...,l_m,V)$.
Corollary 5.5 immediately yields:
\begin{cor}
Let $X$ be a twisted form of the flag $\mathrm{Flag}(l_1,...,l_m,V)$, given as $\mathrm{BS}(l_1,...,l_m,A)$ from above, then $D^b(X)$ admits a tilting object.
\end{cor}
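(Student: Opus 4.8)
The plan is to deduce this immediately from Corollary 5.5 by taking the base scheme there to be a single point. Concretely, I would apply Corollary 5.5 with base scheme $\mathrm{Spec}(k)$ and with the sheaf of Azumaya algebras taken to be $A$ itself, since a central simple $k$-algebra is precisely a sheaf of Azumaya algebras on $\mathrm{Spec}(k)$. With this choice the twisted relative flag $p\colon\mathrm{BS}(l_1,\ldots,l_m,A)\rightarrow\mathrm{Spec}(k)$ is exactly the variety $X$ of the statement, and by construction $\mathrm{BS}(l_1,\ldots,l_m,A)\otimes_k\bar k\simeq\mathrm{Flag}(l_1,\ldots,l_m,V)$, so $X$ is indeed a twisted form of the asserted partial flag.

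First I would check that $\mathrm{Spec}(k)$ meets all the hypotheses required of the base in Corollary 5.5. It is evidently a smooth, projective and integral $k$-scheme; it is already rational, hence trivially becomes rational after a separable field extension (one may take the extension to be $k\subset k$ itself); and it admits a tilting bundle, namely the structure sheaf $\mathcal{O}_{\mathrm{Spec}(k)}=k$, which generates $D^b(\mathrm{mod}(k))$ and has no higher self-extensions. Since $k$ is moreover assumed to be of characteristic zero, all the standing assumptions of Corollary 5.5 are satisfied, and it yields directly that $\mathrm{BS}(l_1,\ldots,l_m,A)$ admits a tilting bundle, in particular a tilting object. This is exactly the assertion.

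I do not expect any genuine obstacle here: the whole content has already been packaged into Theorem 5.4 and Corollary 5.5, and the only observation needed is that the present situation is the "absolute" instance of that relative statement over a point. The one place deserving a line of care is to note that $\mathrm{BS}(l_1,\ldots,l_m,A)$ is a smooth projective \emph{and integral} $k$-scheme, so that the tilting formalism of Section 2 applies; this holds because $\mathrm{BS}(l_1,\ldots,l_m,A)$ is smooth and projective over $k$ and, after base change to $\bar k$, is isomorphic to the integral variety $\mathrm{Flag}(l_1,\ldots,l_m,V)$, hence is geometrically integral.
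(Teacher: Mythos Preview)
Your proposal is correct and matches the paper's approach exactly: the paper simply states that ``Corollary 5.5 immediately yields'' this result, and your argument spells out precisely why---namely that $\mathrm{Spec}(k)$ with the Azumaya algebra $A$ satisfies all hypotheses of Corollary 5.5. If anything, you have been more careful than the paper in verifying the smoothness, rationality, and tilting-bundle hypotheses on the base.
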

Note that the automorphism group of the Grassmannian $\mathrm{Grass}(d,n)$ is equal to $\mathrm{PGL}_n$ only if $2d\neq n$ (see \cite{BL}) and that therefore not all twisted forms of $\mathrm{Grass}(d,n)$ are related to central simple algebras, i.e., are not classified by $H^1(k,\mathrm{PGL}_n)$. To prove that twisted homogeneous varieties of type $B_n$ and $D_n$ admit tilting bundles, one can try to proceed as follows: Let $X$ be a smooth projective and integral $k$-scheme, becoming rational after a separable field extension $k\subset L$. Consider a sheaf of Azumaya algebras $\mathcal{A}$ on $X$ that has an involution $\sigma$ of the first kind. One then has an involution scheme $\mathrm{I}(\mathcal{A},\sigma)\rightarrow X$ which is a twisted form of a quadric bundle $\mathcal{Q}\rightarrow X_L$. Adapting the arguments given in \cite{BLU} in the relative setting should provide us with a tilting bundle on $\mathrm{I}(\mathcal{A},\sigma)\rightarrow X$. Once this can be proved to be true, the approach presented above should give tilting objects on twisted forms of homogeneous varieties of type $B_n$ and $D_n$. To work this out is the aim of a forthcoming paper of the author.

{\small MATHEMATISCHES INSTITUT, HEINRICH--HEINE--UNIVERSIT\"AT 40225 D\"USSELDORF, GERMANY}\\
E-mail adress: novakovic@math.uni-duesseldorf.de


\begin{thebibliography}{999}
\bibitem{ABW} K. Akin, D.A. Buchsbaum and J. Weyman: Schur Functors and Schur Complexes. Adv. Math. Vol. 44 (1982), 207-278.
\bibitem{ART} M. Artin: Brauer-Severi varieties. Brauer groups in ring theory and algebraic geometry, Lecture Notes in Math. 917, Notes by A. Verschoren, Berlin, New York: Springer-Verlag (1982), 194–210
\bibitem{A} S. Aspinwall: D-Branes on Toric Calabi--Yau Varieties. arXiv:0806.2612v2 [hep-th] (2009). 
\bibitem{ASS} I. Assem, D. Simson and A. Skowro\'nski: Elements of the Representation Theory of Associative Algebras. London Math. Society Student Text 65, Cambridge University Press (2006).
\bibitem{ARS} M. Auslander, I. Reiten and S. Smalo: Representation Theory of Artin Algebras: Cambridge Studies in Advanced Mathematics 36, Cambridge University Press (1995).
\bibitem{BAE} S. Baek: Semiorthogonal decompositions for twisted Grassmannians. arXiv:1205.1175v1 [math.AG] (2012).
\bibitem{B} D. Baer: Tilting sheaves in representation theory of algebras. Manuscripta math. Vol. 60 (1988), 323-347.
\bibitem{BA} A. Bayer: Semisimple quantum cohomology and blowups. Int. Math. Res. Not. Vol. 40 (2004), 2069–2083.
\bibitem{BE} A.A. Beilinson: Coherent sheaves on $\mathbb{P}^n$ and problems in linear algebra. Funktsional. Anal. i Prilozhen. Vol. 12 (1978), 68-69.
\bibitem{BER} M. Bernardara: A semiorthogonal decomposition for Brauer--Severi schemes. Math. Nachr. Vol. 282 (2009), 1406-1413.
\bibitem{BL} A. Blanchet: Function Fields of Generalized Brauer--Severi Varieties. Comm. Algebra. Vol. 19 (1991), 97-118.
\bibitem{BLU} M. Blunk: A derived equivalence for some twisted projective homogemous varieties. arXiv:1204.0537v1 [math.AG] (2012).
\bibitem{BOE} C. B\"ohning: Derived categories of coherent sheaves on rational homogenous manifolds. Ph.D Thesis, Universit\"at Bayreuth (2005).
\bibitem{BO} A. Bondal: Representations of associative algebras and coherent sheaves. Math. USSR Izvestiya. Vol. 34 (1990), 23-42.
\bibitem{BO1} A. Bondal and D. Orlov: Semiorthogonal decomposition for algebraic varieties. arXiv:alg-geom/9506012v1 [math.AG] (1995)
\bibitem{BO3} A. Bondal and M. Van den Bergh: Generators and representability of functors in commutative and noncommutative geometry. Mos. Math. J. Vol. 3 (2003), 1-36.
\bibitem{NBOU} N. Bourbaki: \'El\'ements de math\'ematique. Premiere partie: Livre II, Algebre. Actualit\'es Sci. Ind. No. 1272, Hermann, Paris (1959). 
\bibitem{BR} T. Bridgeland: t-structures on some local Calabi--Yau varieties. J. Algebra Vol. 289 (2005), 453-483.
\bibitem{BR1} T. Bridgeland and D. Stern: Helices on del Pezzo surfaces and tilting Calabi-Yau algebras. Adv. Math. Vol. 224 (2010), 1672-1716.
\bibitem{BS} M. Brown and I. Shipman: Derived equivalences of surfaces via numerical tilting. arXiv:1312.3918v2 [math.AG] (2014). 
\bibitem{BH} R.O. Buchweitz and L. Hille: Hochschild (Co-)homology of schemes with tilting object. Trans. Amer. Math. Soc. Vol. 365 (2013), 2823-2844.
\bibitem{BLB} R.O. Buchweitz, G.J. Leuschke and M. Van den Bergh: On the derived category of Grassmannians in arbitrary characteristic. arXiv:1006.1633 [math.RT] (2013), to appear in Compos. Math.
\bibitem{CMR} L. Costa and R. M. Mir\'o-Roig: Tilting sheaves on toric varieties. Math. Z. Vol. 248 (2004), 849-865. 
\bibitem{CMR1} L. Costa and R.M. Mir\'o-Roig: Derived category of projective bundles. Proc. Amer. Math. Soc. Vol. 133 (2005), 2533-2537.
\bibitem{CRMR} L. Costa, S. Di Rocco and R. M. Mir\'o-Roig: Derived category of Fibrations. Math. Res. Lett. Vol. 18 (2011), 425-432.
\bibitem{CS} A. Craw and G. Smith: Toric varieties are fine moduli spaces of quiver representations. preprint, (2005).
\bibitem{C} A. Craw: Quiver representation in toric geometry. Preprint arXiv:0807.2191 (2008).
\bibitem{C1} A. Craw: Quiver flag varieties and multigraded linear series. Duke Math. J. Vol. 156 (2011), 469-500.
\bibitem{D} B. Dubrovin: Geometry and analytic theory of Frobenius manifolds. Proc.of the Int. Cong. Math, Vol II, Doc. Math. (1998), 315-326.
\bibitem{FU0}W. Fulton and J. Harris: Representation theory. A first course, Graduate Texts in Mathematics, Readings in Mathematics 129, New York: Springer-Verlag (1991).
\bibitem{FU} W. Fulton: Young Tableaux, with Applications to Representation Theory and Geometry. Cambridge University Press (1997).
\bibitem{GIS} P. Gille and T. Szamuely: Central Simple Algebras and Galois Cohomology. Cambridge Studies in advanced Mathematics. 101. Cambridge University Press. (2006)
\bibitem{GRO} A. Grothendieck: Le group de Brauer I: Algebras d Azumaya et interpretations diverses, Seminaire Bourbaki. No. 290 (1964).
\bibitem{GRO1} A. Grothendieck: Le group de Brauer II: Theorie cohomologique, Seminaire Bourbaki. No. 297 (1965).
\bibitem{HIP} L. Hille and M. Perling: A Counterexample to King's Conjecture, Compos. Math. Vol. 142 (2006), 1507-1521.
\bibitem{HVB} L. Hille and M. Van den Bergh: Fourier--Mukai transforms, in "Handbook of Tilting Theory", edited by L. Angelieri-H\"ugel, D. Happel, H. Krause, LMS LNS 332. (2007).
\bibitem{HIP2} L. Hille and M. Perling: Exceptional sequence of invertible sheaves on rational surfaces. Compos. Math. Vol. 147 (2011), 1230-1280.
\bibitem{HIP3} L. Hille and M. Perling: Tilting bundles on rational surfaces and quasi-hereditary algebras. arXiv:1110.5843 [math.AG] (2011), to appear in Ann. Inst. Fourier.
\bibitem{HUY} D. Huybrechts: Fourier--Mukai Transforms in Algebraic Geometry. Oxford Mathematical Monographs, The Clarendon Press Oxford University Press (2006).
\bibitem{KAN} M. Kaneda: Kapranov's Tilting Sheaf on the Grassmannian in Positive Characteristic. Algebr. Represent. Theory. Vol. 11 (2008), 347-354.
\bibitem{KAP} M. Kapranov: On the derived category of coherent sheaves on Grasmann manifolds. Izv. Akad. Nauk SSSR Ser. Mat. Vol. 48 (1984), 192-202.
\bibitem{KAP1} M. Kapranov: The derived category of coherent sheaves on a quadric. Funktsional. Anal. i Prilozhen. Vol. 20 (1986), 141-142. English transl. in Functional Anal. Appl. 20 (1986).
\bibitem{KAP2} M. Kapranov: On the derived categories of coherent sheaves on some homogenous spaces. Invent. Math. Vol. 92 (1988), 479-508.
\bibitem{KI}  A. King: Tilting bundles on some rational surfaces: Preprint at http://www.maths.bath.ac.uk/ masadk/papers/ (1997).
\bibitem{KO} M. Kontsevich: Homological algebra of mirror symmetry. Proc. Int. Congress of Mathematicians. Vol. 1,2 Birkh\"auser, Basel (1995) 120-139.
\bibitem{KUZ} A. Kuznetsov: Derived categories of quadric fibrations and intersections of quadrics. Adv. Math. Vol. 218 (2008), 1340-1369.
\bibitem{KUZ2} A. Kuznetsov and A. Polishchuk: Exceptional collections on isotropic grassmannians. arXiv:1110.5607v1 [math.AG] (2011), to appear in J. Eur. Math. Soc.
\bibitem{KUZ4} A.G. Kuznetsov: Semiorthogonal decomposition in algebraic geometry. arXiv:1404.3143v3 [math.AG] (2015).
\bibitem{LSW} M. Levine, V. Srinivas and J. Weyman: K-Theory of Twisted Grassmannians. K-Theory Vol. 3 (1989), 99-121.
\bibitem{ME} H. Meltzer: Exeptional Vector Bundles, Tilting Sheaves and Tilting Complexes for Weighted Projective Lines. Memoirs of the Amer. Math. Soc. Vol. 171, Number 808 (2004).

\bibitem{NEE} A. Neeman: The Grothendieck duality theorem via Bousfield's techniques and Brown representability. J. Amer. Math. Soc. Vol. 9 (1996), 205-236.
\bibitem{NO} S. Novakovi\'c: Absolutely split locally free sheaves on Brauer--Severi varieties and proper $k$-schemes.  arXiv:1501.00859v1 [math.AG] (2015).
\bibitem{NO1} S. Novakovi\'c: Tilting objects on some global quotient stacks. arXiv:1503.00522v1 [math.AG] (2015).
\bibitem{DO} D. Orlov: Projective bundles, monoidal transformations, and derived categories of coherent sheaves. Math. USSR Izv. Vol. 38 (1993), 133-141.
\bibitem{PA} I.A. Panin: On the algebraic K-theory of twisted flag varieties. K-Theory Vol.8 (1994), 541-585.
\bibitem{PE} M. Perling: Cohomology Vanishing and Exceptional Sequences. Habilitationsschrift, Ruhr-Universit\"at Bochum (2009).
\bibitem{PE1} M. Perling: Examples for exceptional sequences of invertible sheaves on rational surfaces. arXiv:0904.0529v2 [math.AG] (2009).
\bibitem{PRN} N. Prabhu-Naik: Tilting Bundles on Toric Fano Fourfolds. arXiv:1501.05871v2 [math.AG] (2015).
\bibitem{SAM} A. Samokhin: Some remarks on the derived categories of coherent sheaves on homogenous spaces. J. Lond. Math. Soc. Vol. 76 (2007), 122-134.
\bibitem{VDB} M. Van den Bergh: Three-dimensional flops and noncommutative rings. Duke Math. J. Vol. 122 (2004), 423-455.
\bibitem{YY} Y. Yan: Tilting sheaves on Brauer--Severi Schemes and Arithmetic Toric Varieties. Ph.D Thesis, University of Western Ontario, Canada (2014).

\end{thebibliography}
\end{document}